\newcommand{\A}{\mathbb{A}}
\newcommand{\C}{\mathbb{C}}
\newcommand{\G}{\mathbb{G}}
\newcommand{\Q}{\mathbb{Q}}
\newcommand{\R}{\mathbb{R}}
\newcommand{\Z}{\mathbb{Z}}
\newcommand{\N}{\mathbb{N}}
\renewcommand{\P}{\mathbb{P}}
\newcommand{\T}{\mathbb{T}}
\newcommand{\cD}{\mathcal{D}}
\newcommand{\cF}{\mathcal{F}}
\newcommand{\cL}{\mathcal{L}}
\newcommand{\cO}{\mathcal{O}}
\newcommand{\cV}{\mathcal{V}}
\newcommand{\cX}{\mathcal{X}}
\newcommand{\cY}{\mathcal{Y}}
\newcommand{\bG}{\mathbb{G}}
\newcommand{\bL}{\mathbb{L}}
\newcommand{\isom}{\cong}
\newcommand{\Bl}{\mathrm{Bl}}
\newcommand{\s}{\mathrm{s}}
\DeclareMathOperator{\Div}{Div}
\DeclareMathOperator{\Fut}{Fut}
\DeclareMathOperator{\vol}{vol}
\DeclareMathOperator{\ord}{ord}
\DeclareMathOperator{\Supp}{Supp}
\newcommand{\bT}{\mathbb{T}}
\newcommand{\coeff}{\mathrm{coeff}}
\newcommand{\bVi}{\overline{V}_\infty}
\numberwithin{equation}{section}
\newtheorem{prop}{Proposition} [section]
\newtheorem{thm}[prop] {Theorem}
\newtheorem{cor}[prop]{Corollary}
\newtheorem{prop-def}[prop]{Proposition-Definition}
\newtheorem{convention}[prop]{Convention}
\newtheorem{lemma}[prop]{Lemma}
\newtheorem{thm-defn}[prop]{Theorem-Definition}
\theoremstyle{definition}
\newtheorem{exa}[prop] {Example}
\newtheorem{definition}[prop]{Definition}
\title{On the K-stability of blow-ups of projective bundles}
\author{Daniel Mallory}
\date{\today}
\begin{document}

\begin{abstract}
We investigate the K-stability of certain blow-ups of $\P^1$-bundles over a Fano variety $V$, where the $\P^1$-bundle is the projective compactification of a line bundle $L$ proportional to $-K_V$ and the center of the blow-up is the image along a positive section of a divisor $B$ also proportional to $L$. When $V$ and $B$ are smooth, we show that, for $B \sim_{\Q} 2L$, the K-semistability and K-polystability of the blow-up is equivalent to the K-semistability and K-polystability of the log Fano pair $(V,aB)$ for some coefficient $a$ explicitly computed. We also show that, for $B \sim_{\Q} l L$, $l \neq 2$, the blow-up is K-unstable. 
 
\end{abstract}
    
\maketitle

\section{Introduction}

Originally introduced in \cite{Tian97, Donaldson02} to provide a criterion for the existence of K\"ahler-Einstein metrics on Fano manifolds, K-stability has recently been a major area of research over the last decade. In the field of complex geometry, the work of Chen-Donaldson-Sun~\cite{CDS15} and Tian~\cite{Tian15} proved the Yau-Tian-Donaldson conjecture, showing that a K\"ahler-Einstein metric exists on a given Fano manifold precisely if and only the manifold is K-polystable. Recently, the algebraic theory of K-stability has seen remarkable progress, culminating in the discovery that K-stability provides a setting in which to construct moduli stacks of K-semistable Fano varieties with corresponding good moduli spaces of K-polystable Fano varieties, called K-moduli (see \cite{Xu20, LXZ22, Xu24}).

Since K-stability provides the framework for a theory of moduli spaces for Fano varieties, a central aspect of research in the field of K-stability is developing methods to determine when a Fano variety is indeed K-stable (or K-polystable, or K-semistable, etc.). Key results towards this end include: Tian's criterion on $\alpha$-invariants of Fano varieties, providing a sufficient condition to show K-polystability (\cite{Tian87, OS12}); the Fujita-Li criterion (\cite{Fujita16, Li17}), relating the Futaki invariants of Donaldson's algebraic reformulation of K-stability to $\beta$-invariants consisting of birational data of the variety (or, more specifically, that of divisors on birational models of the variety); the development of stability thresholds (\cite{BJ20}), relating $\beta$-invariants to the so-called $\delta$-invariant of a variety, thus relating K-stability to basis-type divisors, log canonical thresholds, and log canonical places of complements, which in turn led to an ``inverse of adjunction''-type theorem by Abban-Zhuang (\cite{AZ22}); and the equivalence of K-poly/semistability to a $G$-equivariant setting of the same (with the additional restriction of $G$ reductive for K-polystability) (\cite{DS16, LX18, LWX21, Zhuang21}).

Much of the current literature on determining the K-stability of explicit Fano varieties are in low dimensions (i.e. dimensions $2$ and $3$). In dimension 2, the K-stability of smooth del Pezzo surfaces is well-investigated (see \cite{Tian90, Cheltsov08, PW18}); as is that of singular del Pezzo surfaces (see \cite{MM93, OSS12} for example). In dimension 3, the question of K-stability has been systematically approached via the Mori-Mukai classification of Fano threefolds into 105 deformation families. For example, in \cite{ACCFKMGSSV23} they determine for which of the 105 families is a general member K-polystable. The wall-crossing phenomenon for certain log Fano  pairs of dimension at most $3$ have been studied (see e.g. \cite{ADL24, ADL23, Zhao23a, Zhao23b}). In higher dimensions, there is much less known about the K-stability of explicit Fano varieties. Much of what is known is related to hypersurfaces of projective space (see e.g. \cite{Fujita17, AZ22, AZ23}).

In this paper, we construct classes of K-polystable Fano varieties from K-polystable log Fano pairs of dimension one less, thus leading to many new examples of K-polystable Fano varieties in higher dimensions. Specifically, we relate the K-stability of Fano varieties that are blow-ups of certain $\P^1$-bundles over Fano varieties to the log K-stability of the base of the $\P^1$-bundle structure. More explicitly, let $V$ be a Fano variety of dimension $n-1$, $\cL$ an ample line bundle on $V$ such that $r \cL \sim_\Q -K_V$ for some $r>1$, and $B$ an effective divisor on $V$ such that $B \sim_\Q 2\cL$. Let $Y = \Bl_{B_\infty} \P_V(\cL \oplus \cO_V)$, where $B_\infty$ is the image of $B$ under some positive section of the $\P^1$-bundle structure of $\P_V(\cL \oplus \cO_V) \to V$.

This construction of $Y$, although it may seem rather artificial, appears with some frequency among collections of Fano varieties. For example, smooth del Pezzo surfaces of degree 6 are of this form, as are the smooth members of the families \textnumero 3.9, \textnumero3.19, and \textnumero 4.2 in the Mori-Mukai classification of Fano threefolds. See Section~\ref{sec:examples} for more examples of Fano varieties arising from this construction. 

\begin{thm}\label{main}
    Let $V$, $B$ be as above. Further, let $V$ and $B$ be smooth. Then the variety $Y$ as constructed above is K-semistable (resp. K-polystable) if and only if $(V,aB)$ is  K-semistable (resp.  K-polystable), where 
    \[
       a = a(n,r) :=  \frac{r^{n+1} - (r-1)^{n+1} - (n+1)(r-1)^n }{2(n+1)(r^n - (r-1)^n)} 
    \]

\end{thm}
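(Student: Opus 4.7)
Set $Z := \P_V(\cL \oplus \cO_V)$ and let $\pi : Y \to Z$ denote the blow-up. The central observation is that $Z$ carries a natural $\G_m$-action scaling the $\cL$ summand, fixing both the zero section $V_0$ and the infinity section $V_\infty$ pointwise. Since $B_\infty \subset V_\infty$ is therefore automatically $\G_m$-invariant, this action lifts to $Y$. By Zhuang's equivariant K-stability theorem, $Y$ is K-semistable (resp.\ K-polystable) if and only if it is so against $\G_m$-equivariant divisorial valuations (resp.\ $\G_m$-equivariant special test configurations). The proof reduces, then, to a direct comparison of $\beta$-invariants of $\G_m$-equivariant valuations on $Y$ against $\beta$-invariants of quasi-monomial valuations on the log pair $(V, aB)$, with the explicit coefficient $a(n,r)$ forced by a single normalization.

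\noindent\textbf{Reduction of $\beta$-invariants.} Let $E$ denote the exceptional divisor of $\pi$, $\widetilde{V}_\infty$ the strict transform of $V_\infty$, $p : Y \to V$ the composite map, and $F$ the class of a general fibre of $p$. Every $\G_m$-equivariant prime divisorial valuation on $Y$ dominating a divisor on $V$ is encoded by (a) a quasi-monomial valuation $w$ on $V$ and (b) vertical weights along the three $\G_m$-fixed-point components $V_0$, $\widetilde V_\infty$, $E$ of $Y$. For each such valuation $v$, I would compute $A_Y(v)$ from the standard formula in terms of ramification along these vertical components together with the log discrepancy $A_V(w)$, and compute $S_Y(v) = \vol(-K_Y)^{-1} \int_0^\infty \vol(-K_Y - t\,v)\,dt$ via the projective bundle intersection formula, reducing the volume integrand to a polynomial in the vertical parameter with coefficients given by restricted volumes on $V$. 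Carrying out the integration yields an identity of the form
\[
\beta_Y(v) \;=\; C(n,r)\, \beta_{(V, aB)}(w) \;+\; R(w, \text{vertical weights}),
\]
for some universal constant $C(n,r) > 0$, where $a$ is uniquely characterized by the condition that the remainder $R$ vanishes identically along the purely vertical subspace of $\G_m$-equivariant valuations. Concretely, the explicit formula for $a(n,r)$ arises from evaluating the integral $\int_0^\infty \vol\bigl(-K_Y - t\,\ord_{V_0}\bigr)\,dt$ using the $\P^1$-bundle structure with parameter $r$ and matching against $-K_V - aB \sim_\Q (r - 2a)\cL$ on the base.

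\noindent\textbf{Conclusion and main obstacle.} Given the identity above, K-semistability of $Y$ (equivalently, $\beta_Y \ge 0$ on all $\G_m$-equivariant $v$) is immediately equivalent to K-semistability of $(V, aB)$. For K-polystability, the additional input is that a $\G_m$-equivariant special test configuration of $Y$ with vanishing Futaki either (i) arises from the $\G_m$-action itself, hence is a product, or (ii) descends via the identity to a special test configuration of $(V, aB)$ with vanishing Futaki, so that polystability of $(V, aB)$ forces the downstairs configuration to be a product, and one pulls back to conclude upstairs. The technical heart of the argument is the intersection-theoretic computation of $\vol(-K_Y - t\,v)$ for all vertical weight regimes, which requires careful tracking of how the three components $V_0$, $\widetilde V_\infty$, $E$ (together with the fibre $F$) interact with the blow-up, and the verification that $R$ cancels exactly at the stated value of $a$. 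A secondary delicate point is the $\G_m$-equivariant dictionary for K-polystability, which requires identifying the $\G_m$-factor inside $\Aut^0(Y)$ and checking that the valuation correspondence $v \leftrightarrow w$ genuinely respects the product-versus-non-product distinction on both sides.
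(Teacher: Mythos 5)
Your overall architecture (reduce to $\G_m$-equivariant K-stability, decompose equivariant valuations into a base valuation $w$ on $V$ plus a vertical weight, and compare $\beta$-invariants) matches the paper's, but two steps in your plan do not work as stated. First, your characterization of $a(n,r)$ is wrong: you claim $a$ is pinned down by requiring the remainder $R$ to vanish on purely vertical valuations, and that it "arises from evaluating $\int_0^\infty \vol(-K_Y - t\,\ord_{V_0})\,dt$." But $S_Y(V_0)$ and $S_Y(\widetilde V_\infty)$ are invariants of $Y$ alone; the computation shows $\beta_Y(V_0)=\beta_Y(\widetilde V_\infty)=0$ (equivalently $\Fut\equiv 0$ on the cocharacter lattice) for \emph{every} choice of $a$, so this normalization determines nothing. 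The coefficient $a$ actually arises from the weight decomposition $H^0(-mK_Y)=\bigoplus_j W_{m,j}$: for $m<j\le 2m$ the Zariski decomposition of $-mK_Y-j\widetilde V_\infty$ has negative part $(j-m)E$, whose restriction to $\widetilde V_\infty$ is $(j-m)B$, and $a=\lim_m \frac{1}{mN_m}\sum_j (j-m)_+ N_{m,j}$ is the asymptotic coefficient of $B$ in the fixed part of this refinement. That is a different computation from the one you propose, and it is where the hypothesis $B\sim_\Q 2\cL$ enters.

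Second, the "technical heart" you defer to — computing $\vol(-K_Y - t\,v)$ by the projective-bundle intersection formula for an arbitrary equivariant valuation $v=(w,\text{vertical weights})$ — is not feasible: when $w$ is a nontrivial valuation on $V$, this volume is not a polynomial in $t$ with intersection-number coefficients; it depends on the full (generally transcendental) volume function of $w$ on $V$. The paper circumvents this in two different ways depending on the direction. For the reverse implication it does not prove an exact identity at all for general vertical divisors: it first moves the center to meet $\widetilde V_\infty$ (using the involution swapping the two contractions $Y\to X$), then applies the Abban--Zhuang inequality $\delta_Z(Y)\ge \min\{A(\widetilde V_\infty)/S(\widetilde V_\infty),\ \delta_Z(\widetilde V_\infty, W_{\bullet,\bullet})\}$, which only requires bounding the refinement. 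For the forward implication the exact identity $\beta_Y(D_Y)=\beta_{V,aB}(D_V)$ is proved only for the special divisors pulled back from $V$ (with zero vertical weight), via $\G_m$-invariant bases simultaneously compatible with two filtrations — not via volume integrals — and general twists are handled by $\beta(v_{\mu,b})=\beta(v_{\mu,0})+b\,\Fut(\xi)=\beta(v_{\mu,0})$. Without replacing your direct-integration step by one of these mechanisms, the claimed identity $\beta_Y(v)=C(n,r)\beta_{(V,aB)}(w)+R$ remains unestablished, and with it both directions of the theorem. Your polystability sketch is plausible in outline but likewise rests on this unproven identity, and additionally needs the linearization of the $\G_m$-action on $\cL$ to lift a product test configuration of $(V,aB)$ to one of $Y$.
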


We note the additional hypothesis of $V$ and $B$ being smooth. We believe that this statement holds in more generality, and this will be the focus of a future work that will also explore the implications on the K-moduli of the relevant Fano varieties that such a more general statement would imply.

We note that similar results regarding the K-stability of such varieties were obtained in \cite{CDGFKMG23}; specifically \cite[Theorem 1.10]{CDGFKMG23}, which gives a numerical criterion for the K-polystability of smooth Casagrande-Druel varieties. They further conjecture that such varieties are K-polystable if and only if the base space and the related double cover are both K-polystable. In the smooth case, we confirm their conjecture as a consequence of Theorem \ref{main}.

\begin{cor}{\cite[Conjecture 1.16]{CDGFKMG23}}\label{ceaconj}
    Let $V,B$ be as above. Further, let $V$ and $B$ be smooth.
    Let $W \to V$ be the double cover of $V$ ramified over $B$. Suppose that both $W$ and $V$ are K-polystable. Then $Y$ is also K-polystable.
\end{cor}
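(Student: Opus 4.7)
The plan is to use Theorem~\ref{main} as a black box: it suffices to prove that the log Fano pair $(V, aB)$ is K-polystable for $a = a(n,r)$, given the K-polystability of both $V$ and $W$. I would identify $V$ and $W$ with the ``endpoints'' $c = 0$ and $c = \tfrac{1}{2}$ of the one-parameter family of log Fano pairs $\{(V, cB) : c \in [0, \tfrac{1}{2}]\}$, and then appeal to interpolation of K-polystability to conclude that every intermediate value of $c$, including $c = a$, yields a K-polystable pair.

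The endpoint $c = 0$ is exactly the hypothesis that $V$ is K-polystable. For the endpoint $c = \tfrac{1}{2}$, the double cover $\pi : W \to V$ ramified over $B$ satisfies $K_W = \pi^*(K_V + \tfrac{1}{2}B) = -\pi^*((r-1)\cL)$, which is anti-ample since $r > 1$, so $W$ is indeed Fano. The deck group $G = \Z/2\Z$ acts on $W$ with quotient $V$; since $G$ is reductive, the equivalence between K-polystability and $G$-equivariant K-polystability for reductive groups reduces K-polystability of $W$ to its $G$-equivariant version, which by descent of $G$-equivariant special test configurations (with matching Futaki invariants) is equivalent to K-polystability of the quotient log pair $(V, \tfrac{1}{2}B)$. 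The inclusion $a \in [0, \tfrac{1}{2}]$ required for the interpolation step is then elementary: it follows from the two mean value inequalities $(n+1)(r-1)^n \leq r^{n+1} - (r-1)^{n+1} \leq (n+1)r^n$, applied to $t \mapsto t^{n+1}$ on $[r-1, r]$.

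The final step is the interpolation of K-polystability for log Fano pairs: if $(V, c_0 B)$ and $(V, c_1 B)$ are both K-polystable, and $(V, cB)$ is log Fano for every $c \in [c_0, c_1]$, then $(V, cB)$ is K-polystable for every such $c$. This should follow from the linearity of the generalized Futaki invariant in $c$ along any common special test configuration of the family: K-polystability at both endpoints forces such a test configuration with nonpositive Futaki invariant at an intermediate $c$ to be a product. Applying the principle with $c_0 = 0$, $c_1 = \tfrac{1}{2}$, $c = a$ yields the desired K-polystability of $(V, aB)$, and hence of $Y$ via Theorem~\ref{main}. I expect the main obstacle to be making the double cover step precise: while the equivalence between K-polystability of $W$ and K-polystability of $(V, \tfrac{1}{2}B)$ is expected, the rigorous verification requires combining the reductive equivariant K-stability results with a careful matching of Futaki invariants under descent of $G$-equivariant test configurations, which I would handle by tracking intersection numbers on a $G$-equivariant total space and its quotient.
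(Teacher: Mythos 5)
Your proposal is correct and follows essentially the same route as the paper: reduce via Theorem~\ref{main} to the K-polystability of $(V,aB)$, identify the K-polystability of the double cover $W$ with that of the pair $(V,\tfrac{1}{2}B)$ (for which the paper cites \cite{LZ22, Zhuang21}), check that $0 < a(n,r) < \tfrac{1}{2}$, and conclude by interpolation of K-stability (\cite[Proposition 2.13]{ADL24}). The only difference is that you sketch proofs of the two cited ingredients (equivariant descent for the $\Z/2\Z$-cover and Futaki-linearity for interpolation) rather than quoting them.
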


Our approach to Theorem \ref{main} is as follows: we first reduce the statement to that of $\bT$-equivariant K-stability with the standard $\bT$-action induced by the $\P^1$-bundle structure where $\bT = \bG_m$. Then for the reverse direction of the statement on K-semistability, we use \cite[Theorem 3.3]{AZ22} to bound the $\beta$-invariants of most $\bT$-equivariant divisors over $Y$ (specifically, those with ``vertical'' centers on $Y$) and directly compute the $\beta$-invariants of the rest (those with ``horizontal'' centers). For the forward direction, we construct an explicit destabilizing divisor on $Y$ given a destabilizing divisor on $(V,aB)$. For the K-polystability case of the statement, we directly show that, given the K-polystability of $(V,aB)$ (resp. $Y$), a divisor $E$ over $Y$ (resp. $(V,aB)$) with $\beta$-invariant $0$ must induce a product test configuration. 

Theorem~\ref{main} reduces questions of the K-stability of certain families of Fano varieties to similar questions set in one dimension smaller, in exchange for the added complexity of now dealing with the K-stability of log pairs. However, this added complexity is often already of research interest; as seen in several examples, the K-stability of pairs is worked out in the investigation of the wall-crossing phenomenon of moduli spaces and, from some perspectives, is a main focus of investigating such wall-crossing phenomena. We direct readers towards \cite{ADL24, Zhou22} for more on the wall-crossing phenomenon for K-stability.

We also note that, in the construction of $Y$, whose geometry we explore in Section~\ref{secgeo}, the choice of $B$ being $\Q$-linearly equivalent to $2\cL$ is necessary for $Y$ to be K-semistable. Analysis of the generalization of the construction when $B \sim_\Q l \cL$ for $l \neq 2$ leads to the following:

\begin{thm}\label{unstable}
    Let $Y$ be constructed as above with $V$ and $B$ smooth and $B \sim_\Q l \cL$ for $0 \leq l < r+1, l \neq 2$. Then $Y$ is K-unstable. Furthermore, either the strict transform of the image of the positive section containing $B_\infty$, denoted as $\bVi$, or the strict transform of the zero section, $V_0$, is a destabilizing divisor for $Y$.
\end{thm}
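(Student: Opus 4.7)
The approach is to exhibit a prime divisor on $Y$ with strictly negative $\beta$-invariant; by the Fujita--Li criterion this suffices to conclude K-instability. Since $\bVi$ and $V_0$ are smooth prime divisors on the smooth variety $Y$, their log discrepancies are $A_Y(\bVi) = A_Y(V_0) = 1$, so our goal becomes to show that $S_Y(\bVi) > 1$ whenever $l > 2$ and $S_Y(V_0) > 1$ whenever $l < 2$. We start from the identifications $\pi^*V_\infty = \bVi + E$ and $V_\infty - V_0 \sim_\Q \pi_V^*\cL$, which yield
\[
-K_Y \sim_\Q V_0 + \bVi + r L \sim_\Q 2\bVi + E + (r-1)L \sim_\Q 2V_0 - E + (r+1)L,
\]
where $L := \pi_Y^*\cL$. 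The normal bundles of the two sections are $V_0|_{V_0} = -\cL$ and $\bVi|_{\bVi} = (1-l)\cL$ (using $B \sim_\Q l\cL$, so that $E|_{\bVi}$ corresponds to $B$ under $\bVi \cong V$); together with $V_0 \cdot \bVi = V_0 \cdot E = 0$ (by disjointness) and the intersection identity
\[
E^c \cdot L^{n-c} = (-1)^{c-1}\tfrac{l(l^{c-1} - 1)}{l-1}\cL^{n-1} \quad (c \geq 2)
\]
obtained from the relative Euler sequence on $E = \P(N_{B_\infty/W})$, this determines all intersection numbers we need.

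Next, we compute the Zariski decompositions of $-K_Y - t\bVi$ and $-K_Y - tV_0$, both of which have pseudo-effective threshold $\tau = 2$ (seen by intersecting with a general fiber of $\pi_Y: Y \to V$). For $\bVi$, the intersection with a fiber $F_E$ of $E \to B_\infty$ is $1 - t$, so the class fails to be nef for $t > 1$; the decomposition for $t \in [1, 2]$ is
\[
-K_Y - t\bVi = \bigl((2-t)\pi^*V_\infty + (r-1)L\bigr) + (t-1)E,
\]
with the first summand nef by the semi-ampleness of $V_\infty$ on $W$ (its linear system contracts $V_0$) and the ampleness of $(r-1)\cL$ on $V$. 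For $V_0$, the obstruction to nef-ness past $t = 1$ comes from the strict transforms $F_v'$ of the fibers $\P^1_v$ with $v \in B$, which satisfy $F_v' \equiv F - F_E$ and $(-K_Y - tV_0) \cdot F_v' = 1 - t$. These curves sweep out the strict transform $\tilde D \sim lL - E$ of $\pi_V^{-1}(B) \subset W$, and the decomposition for $t \in [1, 2]$ takes the form $N_t = (t-1)\tilde D$ with positive part $P_t = -K_Y - tV_0 - (t-1)\tilde D$, verified nef by direct intersection checks against curves in $V_0, \bVi, E$ and the fiber classes.

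Integrating the top self-intersections of the positive parts against $dt$ from $0$ to $2$ then yields explicit closed-form expressions for $S_Y(\bVi)$ and $S_Y(V_0)$ as rational functions of $n, r, l$, with numerators built from $(n+1)$-st powers such as $(r+2)^{n+1}, (r+1+l)^{n+1}, r^{n+1}, (r-1)^{n+1}$. Both $S_Y(\bVi) - 1$ and $S_Y(V_0) - 1$ vanish identically at $l = 2$, in accordance with Theorem~\ref{main}. The remaining, and chief, technical step is to show that $S_Y(\bVi) - 1$ factors as $(l-2)$ times an expression strictly positive for $l \in [0, r+1) \setminus \{2\}$, and dually that $S_Y(V_0) - 1$ factors as $(2-l)$ times a positive factor -- giving a destabilizing divisor in every case $l \neq 2$. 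As a sanity check, in the toy case $n = 2, r = 2$ (where $Y$ is a blow-up of $\F_1$ at $l$ points of the $+1$ section) the formulas reduce to $S_Y(\bVi) = \tfrac{20-l}{3(8-l)}$ and $S_Y(V_0) = \tfrac{28-5l}{3(8-l)}$, which directly confirm the expected sign pattern.
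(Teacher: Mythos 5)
Your overall strategy is sound and, up to reorganization, is the same computation the paper performs: the Zariski decompositions you write down for $-K_Y - t\bVi$ and $-K_Y - tV_0$ (negative parts $(t-1)E$ and $(t-1)\tilde D$ on $[1,2]$) agree with the paper's, and your $n=2$, $r=2$ sanity check is correct, including the sign pattern $S_Y(\bVi)>1 \iff l>2$. The problem is that what you label ``the remaining, and chief, technical step'' --- that $S_Y(\bVi)-1$ equals $(l-2)$ times a quantity that is strictly positive for all $n$, $r>1$, and $l\in[0,r+1)$ --- is exactly the heart of the proof, and you assert it rather than prove it. In general dimension this is a nontrivial inequality between sums of $(n+1)$-st powers, and it does not follow from inspection of the closed form. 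The paper gets around it by writing the relevant integrand as $(1-w)\bigl(\tfrac{f(1)-f(w)}{1-w} - \tfrac{f(wy)-f(y)}{wy-y}\bigr)$ with $f(x)=(r-x)^n$, $w=1-t$, $y=l-1$, and comparing difference quotients of the convex function $f$ on $(0,r)$; some such idea is needed to close your argument, and without it the proof is incomplete.

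Two further remarks. First, your stated intersection formula $E^c\cdot L^{n-c}=(-1)^{c-1}\tfrac{l(l^{c-1}-1)}{l-1}\cL^{n-1}$ has the wrong sign for odd $c\ge 3$: since $\pi_*(E^c)=(-1)^{c-1}s_{c-2}(N_{B_\infty/X})$ and $N_{B_\infty/X}\cong (l\cL\oplus\cL)|_B$, one gets $E^c\cdot L^{n-c}=-\tfrac{l(l^{c-1}-1)}{l-1}\cL^{n-1}$ for every $c\ge 2$ (e.g.\ $E^3=-l(l+1)\cL^2$ when $n=3$). This is harmless if, as in the paper, you expand the positive parts in terms of $\bVi$, $V_0$, and $L$ and use only $\bVi^k\cdot L^{n-k}=(1-l)^{k-1}\cL^{n-1}$ and $V_0^k\cdot L^{n-k}=(-1)^{k-1}\cL^{n-1}$, but as stated it would corrupt any computation that uses it. Second, you are proving more than the theorem asks: the statement only requires that \emph{one} of $\bVi$, $V_0$ destabilizes, and the paper exploits this by first showing $S_Y(\bVi)+S_Y(V_0)=2$ (so $\beta_Y(\bVi)+\beta_Y(V_0)=0$, which your $n=2$ formulas indeed satisfy) and then only needing $\beta_Y(\bVi)\neq 0$ for $l\neq 2$. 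Adopting that identity would cut your remaining work in half --- you would need the sign of only one of the two factors --- though you would still need the convexity (or some equivalent) argument to establish even non-vanishing.
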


We comment that the restriction on $l$ of $0 \leq l < r+1$ is precisely the range such that $Y$ is also Fano, with the interpretation that for $l = 0$, $Y$ is simply the $\P^1$-bundle $\P_V(\cL \oplus \cO_V)$. The case $l = 0$ was previously known, see \cite[Theorem 1.3]{ZZ22}. From Theorem~\ref{unstable} we provide a simple construction of a K-unstable Fano family in each dimension. Specifically, by Theorem~\ref{unstable}, The blow-up of $\P^n$ along a codimension $2$ linear subvariety is K-unstable; see Example~\ref{exauns}. 

Finally, during the preparation for our manuscript, we learned from Linsheng Wang that a different proof of Theorem~\ref{main} can be obtained from combining our Lemma~\ref{futaki} and \cite[Theorem 1.1]{Wang24}.

\subsection{Acknowledgements} I would like to thank Yuchen Liu for suggesting the problem and for many helpful conversations regarding it.

The author was partially supported by NSF Grant DMS-2148266 and NSF CAREER Grant DMS-2237139.

\tableofcontents
\section{Preliminaries}
First we review several definitions and theorems regarding the K-stability of Fano varieties. We work over $\mathbb{C}$ for the entirety. A log Fano pair is a klt projective pair $(X,D)$ such that $-(K_X +D) $ is ample. We recall the definitions of $\beta$-invariants and product-type divisors; these contain all of the necessary data to define the K-semistability and K-polystability of log Fano pairs.

\begin{definition}[$\beta$-invariant, \cite{Fujita19}, \cite{Li17}]
    For $(X,D)$ a log Fano pair and $E$ some divisor over $X$ (that is to say, $E$ is a prime divisor on some birational model $\pi: Y \to X$ of $X$), we define the $\beta$-\textit{invariant} $\beta_{X,D} (E)$ as follows: Let $A_{X,D}(E)$ denote the log discrepancy of $E$ with respect to the log pair $(X,D)$, and let 
    \[
    S_{X,D}(L; E) = \frac{1}{\vol(L)} \int_0^\infty \vol(\pi^*(L) -tE ) dt
    \]
    for $L$ a big $\Q$-Cartier divisor on $X$ where $\vol(L) = \lim_{k\to \infty} \frac{\dim H^0(X, L^{\otimes k})}{k^n / n!}$ denotes the \textit{volume} of $L$. When $L = -(K_X + D)$, we will usually omit the line bundle from the notation and simply write $S_{X,D}(E)$, which is often called the \textit{expected order of vanishing} of $E$ with respect to the pair $(X,D)$. Then we define $\beta_{X,D} (E) := A_{X,D}(E) - S_{X,D}(E)$.

    Since we often will pass between divisors and divisorial valuations, we wish to mention the definition of the $\beta$-invariant of a valuation $v$. By a \textit{valuation on $X$}, we mean a valuation $v: \C(X)^* \to \R$ that is trivial on $\C$. Since $X$ is projective, given a valuation $v$ on $X$ there will exist a unique point $\eta \in X$ such that $v \geq 0$ on $ \cO_{X,\eta} $ and $v > 0$ on the maximal ideal $m_{X,\eta} \subset \cO_{X,\eta}$ called the \textit{center} of $v$ on $X$. We will denote the closure of the center $\eta$ of a valuation $v$ as $c_X(v)$. For a divisor $E$ over $X$, we will write $c_X (E)$ for the center of the valuation $\ord_E$.
    
    We will define $\beta_{X,D} (v) := A_{X,D}(v) - S_{X,D}(v)$, where $A_{X,D}(v)$ is the log discrepancy of the valuation (see \cite[Proposition 5.1]{JM12} and \cite[Theorem 3.1]{BdFFU15}), while 
    \[
    S_{X,D}(L;v) = \frac{1}{\vol(L)} \int_0^\infty \vol(L -tv) dt
    \]
    where $\vol(L -tv) = \lim_{k\to \infty} \frac{\dim \{s \in H^0(X, L^{\otimes k}) |  v(s) \geq kt\}}{k^n / n!}$. For $E$ a divisor over $X$, we have that $S_{X,D}(L;\ord_D) = S_{X,D}(L;E)$. 
\end{definition}

\begin{definition}[test configurations]
    For a triple $(X, D, L)$ of a log Fano pair $(X,D)$ with $X$ dimension $n$ and $L$ an ample line bundle on $X$ with $L \sim -m(K_X + D)$ for some $m \in \N$, a \textit{test configuration} $(\cX, \cD, \cL)$ is a triple consisting of:
    \begin{itemize}
        \item a normal variety $\cX$;
        \item an effective $\Q$-divisor $\cD$ on $\cX$;
        \item a flat projective morphism $\pi: (\cX, \cD) \to \A^1$ such that $\cL$ is a $\pi$-ample line bundle;
        \item a $\G_m$-action on the triple $(\cX, \cD, \cL)$ such that $\pi$ is equivariant with respect to the standard multiplicative action of $\G_m$ on $\A^1$;
        \item a $\G_m$-equivariant isomorphism $\nu: (\cX \setminus \cX_0,  \cD|_{\cX \setminus \cX_0}, \cL|_{\cX \setminus \cX_0}) \to (X,D,L) \times (\A^1\setminus\{0\})$.
    \end{itemize}

    We say a test configuration is a \textit{product test configuration} if the above isomorphism $\nu$ extends to an isomorphism $(\cX, \cD, \cL) \cong (X,D,L) \times \A^1$. The restriction of the valuation $\ord_{\cX_0}$ to $\C(X) \subset \C(\cX) \cong \C(X)(t)$ yields a divisorial valuation on $X$; we will call the divisor $E$ such that $\ord_E = \ord_{\cX_0}|_{\C(X)}$ the \textit{divisor associated} to the test configuration $(\cX, \cD, \cL)$.
\end{definition}

\begin{definition}[product-type divisors]
\label{defprdtype}
    Let $(X,D)$ be a log Fano pair and let $E$ be a divisor over $X$. Let 
    \[
    \text{Rees}(E) := \bigoplus_{m \in \N} \bigoplus_{p \in \Z} \{s \in H^0 (-mK_X ) | \ord_E (s) \geq p\} 
    \]
    be the \textit{Rees algebra} of $E$. Suppose $\text{Rees}(E)$ is finitely generated (in such case, we call $E$ a \textit{dreamy divisor}). We define the \textit{test configuration associated to $E$} to be   
    \[
        TC(E) := \text{Proj}_X \left(\text{Rees}(E)\right)
    \]
    
    We say $E$ is a \textit{product-type divisor} over $X$ if $TC(E)$ is a product test configuration.
\end{definition}

\begin{definition}(K-stability, \cite{Fujita19}, \cite{Li17}, \cite[Theorem E]{LX18}, \cite[Theorem 1.4]{LWX21}, \cite[Theorem 1.1]{Zhuang21})
\label{defstab}
    Let $(X,D)$ be a log Fano pair. Suppose we have an action of an algebraic torus $\bT = \G_m^r$ on $(X,D)$. Then, we say that:
    \begin{itemize}
        \item $(X,D)$ is \textit{K-semistable}, if, for all $\bT$-invariant divisors $E$ over $X$, $\beta_{X,D}(E) \geq 0$. 
        \item  Then $(X,D)$ is \textit{K-polystable}, if $X$ is K-semistable and, for all $\bT$-invariant divisors $E$ over $X$, $\beta_{X,D}(E) = 0$ implies that $E$ is a product-type divisor.
        \item $(X,D)$ is \textit{K-unstable} if it is not K-semistable.  
    \end{itemize}
\end{definition}

\begin{definition}[filtrations]  
    For a finite dimensional vector space $V$, we will say a \textit{filtration} (or $\R$\textit{-filtration}) $\cF$ on $V$ is a collection of subspaces $\cF^\lambda V \subseteq V$ for $\lambda \in \R$ such that $\cF^{\lambda_0} V = V$ for some $\lambda_0 \in \R$, $\cF^{\lambda_1} V = 0$ for some $\lambda_1 \in \R$, $\lambda \geq \lambda'$ implies $\cF^{\lambda'}V \subseteq \cF^\lambda V$, and that the collection is left-continuous, which is to say that $\cF^{\lambda -\epsilon} V = \cF^\lambda$ for all $\lambda$ and sufficiently small $0< \epsilon$. 

    A valuation $v$ on $V$ has a naturally associated filtration $\cF_v V$ where $\cF_v^\lambda V = \{x \in V | v(x) \geq \lambda\}$. In particular, given a linear system $V \subseteq H^0(X,L)$ and a divisor $D$ on $X$, there is a naturally associated filtration on $V$ given by the filtration associated to $\ord_D$.

    Given a filtration $\cF$ on a vector space $V$, we say a basis $\{s_0, \ldots, s_m\} \subset V$ of $V$ is \textit{compatible} with $\cF$ if, for all $\lambda \in \R$, $\cF^\lambda V$ is spanned by some subset of the $s_i$. 
\end{definition}

Next we recall details of $\delta$-invariants, as related to both $\beta$-invariants and basis-type divisors, to the end of recalling \cite[Theorem 3.3]{AZ22}, which will be a key lemma in the proof of our main result. 

\begin{definition} [$\delta$-invariants, \cite{BJ20}]  
    For a log Fano pair $(X,D)$, a subvariety $Z \subseteq X$, and a big $\Q$-Cartier divisor $L$ on $X$, we define the $\delta$-invariant of the triple $(X,D,L)$ along $Z$ as
    \[
    \delta_Z (X,D,L) = \inf_{E ,  Z \subseteq c_X(E)}  \frac{A_{X,D}(E)}{S_{X,D}(L;E)}
    \]
    where the infimum is taken over divisors $E$ over $X$ whose centers on $X$ contain $Z$. As above, when $L = - (K_X +D)$, we will often omit $L$ from the notation. Likewise, when $Z = X$, we will omit the subscript. From Definition~\ref{defstab}, we see that $(X,D)$ is K-semistable precisely when $\delta(X,D) \geq 1$. 
\end{definition}

    There is an alternate approach to determining $\delta$-invariants due to \cite{BJ20}; towards that end, we recall basis-type divisors.

\begin{definition} (basis-type divisors,  \cite[Definition 0.1]{FO18})
    Let $L$ be a big $\Q$-Cartier $\Q$-divisor on $(X,D)$, and $V$ some linear series $V \subset H^0(X, mL)$. For a global section $s \in V$, let $\{s = 0\}$ denote the divisor $\Div(s) +mL$. Then, a \textit{basis-sum divisor} of $V$ is a divisor $\Delta$ of the form
    \[
    \Delta = \sum_{i = 0}^{N_m} \{s_i = 0\} 
    \]
    where $N_m = h^0(X,mL)$ and $\{s_0, \ldots s_{N_m}\}$ form a basis of $V$. We define a divisor $\Delta'$ to be an \textit{$m$-basis type $\Q$-divisor} of the line bundle $L$ if $\Delta' = \frac{1}{mN_m} \Delta$ for $\Delta$ a basis-sum divisor of $H^0(X,mL)$.

    Given a valuation $v$ on $X$, we say that an $m$-basis type $\Q$-divisor $\Delta$ is \textit{compatible with $v$} if the basis $\{s_0, \ldots s_{N_m}\}$ is compatible with the filtration on $V$ associated to $v$. If $v = \ord_D$ for a divisor $D$ over $X$, then we will say $\Delta$ is \textit{compatible with $D$}.  

    Let $\delta_{Z,m} (X,D,L) = \sup\{ t \geq 0 | (X,D +t \Delta) \text{ is log canonical near } Z \} $
    where the supremum is over all $m$-basis type $\Q$-divisors $\Delta$ of $L$. 
    
    By \cite[Theorem 4.4]{BJ20}, we have that $\delta_Z (X,D,L) = \lim_{m\to \infty} \delta_{Z,m} (X,D,L)$, thus allowing us to approach K-stability via basis-type divisors.

    We also can approximate $S_{X,D}$. Let
    \[
    S_m (L;v) = \sup_D v (D)
    \]
    where the supremum is taken over $m$-basis type $\Q$-divisors of $L$. Then, by \cite[Corollary 3.6]{BJ20}, we have that $S_{X,D} (L;E) = \lim_{m\to\infty} S_m(L;E)$. 
\end{definition}

\begin{definition}(multi-graded linear systems, \cite[Definition 2.11]{AZ22})
    Let $L_1, \ldots L_k$ be big $\Q$-Cartier $\Q$-divisors on $(X,D)$. A $\textit{multi-graded linear system}$ $V_{\Vec{\bullet}}$ graded by $\N^k$ on $(X,D)$ consists of a set of subspaces 
    \[
        V_{\Vec{u}} \subset H^0 (X, u_1 L_1 + \ldots u_k L_k)
    \]
    for all $\Vec{u} \in \N^k$ such that $V_{\Vec{0}} = \C$, and, for any $\Vec{u}_1, \Vec{u}_2 \in \N$, $V_{\Vec{u_1}} \cdot V_{\Vec{u_2}} \subset V_{\Vec{u_1} + \Vec{u_2}}$. Denote
    \[
    N_m = \sum_{\Vec{u} \in \{m\} \times \N^r } h^0 (X, V_{\Vec{u}})
    \]
    We can now analogously define an $m$-basis type $\Q$-divisor of a $N^{k+1}$-graded linear system $V_{\Vec{\bullet}}$ to be a divisor $\Delta$ of the form
    \[
    \Delta = \frac{1}{ m N_m } \sum_{\Vec{u} \in \{m\} \times \N^r} \Delta_{\Vec{u}}
    \]
    where $\Delta_{\Vec{u}}$ is a basis-sum divisor of $V_{\Vec{u}}$. We also analogously define $S_m (V_{\Vec{\bullet}};E)$, $S_{X,D}(V_{\Vec{\bullet}};E)$, $\delta_m(X,D,V_{\Vec{\bullet}})$ and $\delta(X,D,V_{\Vec{\bullet}})$, and note that, for a big line bundle $L$, we have that 
    \[
    \delta(X,D,W_{\bullet}) = \delta(X,D,L)
    \]
    for the $\N$-graded linear system $W_{\bullet}$ with $W_m = H^0(X,L^{\otimes m})$. 
\end{definition}

\begin{definition}(refinements of linear systems, \cite[Example 2.1]{AZ22}) \label{refine}
    Let $(X,D)$ be a log Fano pair, $V_{\Vec{\bullet}}$ an $\N^k$-graded linear system on $X$ for some big $\Q$-Cartier $\Q$-divisors $L_1, \ldots L_{k}$, and $E \subset Y \xrightarrow{\pi} X$ a divisor over $X$. We define \textit{the refinement of $V$ by $E$} to be the $\N^{k+1}$-graded linear series $W_{\Vec{\bullet}}$ associated to the big $\Q$-Cartier $\Q$-divisors $L_1|_E, \ldots L_{k}|_E, -E|_E$, with
    \begin{align*}
        W_{\Vec{u},j} &= \text{Im}\left( \cF_j V_{\Vec{u}} \to H^0 (X, \pi^*(u_1 L_1 + \ldots + u_k L_k) - jE) \right. \\
        & \left. \xrightarrow{\rho} H^0(E, \pi^*(u_1 L_1 + \ldots + u_k L_k)|_E - j E|_E) \right)
    \end{align*}
    where the map $\rho$ is the restriction map on global sections.
\end{definition}

The invariant data of a multi-graded linear system has a close relation with that of its refinement, as is seen from ~\cite[Theorem 3.3]{AZ22}; this theorem will in turn be a key lemma in the argument for the semistable version of Theorem~\ref{main}.

\begin{thm} \cite[Theorem 3.3]{AZ22} 
    Let $(X,D)$ be a log pair, $V_{\Vec{\bullet}}$ a multi-graded linear system on $X$, $E \subset Y \xrightarrow{\pi} X$ a primitive divisor over $X$, and $Z \subset X$ a subvariety. Let $Z_0$ denote an irreducible component of $Z \cap c_X(E) $, let $D_Y$ denote the strict transform of $D$ on $Y$, and let $D_F$ denote the different (that is, the divisor on $E$ such that $(K_Y + D_Y + E)|_E = K_E + D_E$. Let $W_{\Vec{\bullet}}$ denote the refinement of $V_{\Vec{\bullet}}$ by $E$. Then, if $Z \subset c_X(E)$, we have the following inequality of $\delta$-invariants
    \[
        \delta_Z (X,D, V_{\Vec{\bullet}}) \geq \min \left\{ \frac{A_{X,D} (E)}{S_{X,D}(V_{\Vec{\bullet}})}, \inf_{Z'} \delta_{Z'} (E, D_E, W_{\Vec{\bullet}}) \right\}
    \]
    and, if $Z \not\subset c_X(E)$,
    \[
        \delta_Z (X,D, V_{\Vec{\bullet}}) \geq \inf_{Z'} \delta_{Z'} (E, D_E, W_{\Vec{\bullet}})
    \]
    with both infima over subvarieties $Z' \subset Y$ such that $\pi(Z') = Z_0$. 
\end{thm}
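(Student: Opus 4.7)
The plan is to reduce, via the basis-type divisor characterization $\delta_Z = \lim_{m\to\infty} \delta_{Z,m}$ of Blum--Jonsson, to producing for each $m$-basis type $\Q$-divisor $\Delta$ of $V_{\Vec{\bullet}}$ a corresponding $m$-basis type $\Q$-divisor of the refinement $W_{\Vec{\bullet}}$ on $E$, and then comparing log canonical thresholds via inversion of adjunction along $E$. Concretely, for each $\Vec{u}$ choose a basis of $V_{\Vec{u}}$ compatible with the filtration $\cF_{\ord_E}$; then the resulting $m$-basis type divisor $\Delta$ satisfies $\ord_E(\pi^*\Delta) = S_m(V_{\Vec{\bullet}};E)$, and on $Y$ we may decompose
\[
\pi^*\Delta = S_m(V_{\Vec{\bullet}};E)\cdot E + \widetilde{\Delta},
\]
where $\widetilde{\Delta}$ is effective and not supported on $E$. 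The construction of the refinement in Definition~\ref{refine} (the restriction maps $\rho$ applied to $\cF_j V_{\Vec{u}}$) implies that, up to a normalization factor of the form $1 + o_m(1)$, the restriction $\widetilde{\Delta}|_E$ is an $m$-basis type $\Q$-divisor of $W_{\Vec{\bullet}}$.

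For the adjunction step, suppose $(X, D + t\Delta)$ is lc near $Z_0$. Then the log pullback identity
\[
K_Y + D_Y + t\widetilde{\Delta} + \bigl(1 - A_{X,D}(E) + t\, S_m(V_{\Vec{\bullet}};E)\bigr) E \sim_\Q \pi^*(K_X + D + t\Delta)
\]
shows that the left-hand pair is lc over $Z_0$. When $t \leq A_{X,D}(E)/S_m(V_{\Vec{\bullet}};E)$, the coefficient of $E$ is at most $1$, and inversion of adjunction yields that $(E, D_E + t\, \widetilde{\Delta}|_E)$ is lc near every $Z' \subset E$ with $\pi(Z') = Z_0$. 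Sending $m \to \infty$ and taking the infimum over $\Delta$, the case $Z \subset c_X(E)$ yields the minimum appearing in the statement: either the coefficient constraint binds first, contributing the term $A_{X,D}(E)/S_{X,D}(V_{\Vec{\bullet}};E)$, or else we obtain $t \leq \inf_{Z'} \delta_{Z'}(E, D_E, W_{\Vec{\bullet}})$ from the adjunction bound. In the case $Z \not\subset c_X(E)$, the coefficient constraint for $E$ is not locally binding (since $Z$ has components away from $c_X(E)$), and only the refinement term survives.

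The main obstacle will be carefully verifying that $\widetilde{\Delta}|_E$ is genuinely asymptotic in $m$ to an $m$-basis type $\Q$-divisor of $W_{\Vec{\bullet}}$; because of the additional index in the $\N^{k+1}$-grading on $W_{\Vec{\bullet}}$, one must check that $\sum_j h^0(E, W_{\Vec{u}, j})$ approximates $h^0(X, V_{\Vec{u}})$ sharply enough to absorb the error, and that the convergence $S_m(V_{\Vec{\bullet}};E) \to S_{X,D}(V_{\Vec{\bullet}};E)$ (together with the analogous convergence for $W_{\Vec{\bullet}}$) is uniform in the sense required to compare the two sides of the inequality in the limit $m \to \infty$.
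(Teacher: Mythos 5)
First, note that the paper does not prove this statement at all: it is quoted from Abban--Zhuang \cite{AZ22} and used as a black box, so there is no in-paper argument to compare against. Your outline does follow the strategy of the original proof in \cite{AZ22} --- bases compatible with $\cF_{\ord_E}$, the decomposition $\pi^*\Delta = S_m(V_{\bullet};E)\,E + \widetilde{\Delta}$ with $\widetilde{\Delta}|_E$ a basis type divisor of the refinement, and adjunction along $E$. The worry you flag at the end is in fact the mildest issue: since $W_{\vec{u},j}$ is the image of $\cF^{j}V_{\vec{u}}$ with kernel exactly $\cF^{j+1}V_{\vec{u}}$, one has $\sum_j \dim W_{\vec{u},j} = \dim V_{\vec{u}}$, so the normalizations $\tfrac{1}{mN_m}$ on the two sides agree exactly and no $1+o_m(1)$ correction is needed.

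There are, however, two genuine gaps. (1) Your adjunction step runs in the wrong direction: you \emph{assume} $(X,D+t\Delta)$ is lc and \emph{conclude} $(E,D_E+t\widetilde{\Delta}|_E)$ is lc, which would yield an upper bound on $\delta_{Z'}(E,D_E,W_{\bullet})$ in terms of $\delta_Z(X,D,V_{\bullet})$ --- the reverse of the stated inequality. To prove the theorem you must start from $t<\min\{A_{X,D}(E)/S(V_{\bullet};E),\ \inf_{Z'}\delta_{Z'}(E,D_E,W_{\bullet})\}$, deduce that $(E,D_E+t\widetilde{\Delta}|_E)$ is lc near each $Z'$ over $Z_0$ (because $\widetilde{\Delta}|_E$ is a basis type divisor of $W_{\bullet}$), and then invoke the hard direction of inversion of adjunction to conclude that $(Y,\,D_Y+t\widetilde{\Delta}+(1-A_{X,D}(E)+tS_m(V_{\bullet};E))E)$ is lc near $\pi^{-1}(Z_0)$, hence that $(X,D+t\Delta)$ is lc near $Z_0$. (2) The quantity $\delta_{Z,m}$ is an infimum over \emph{all} $m$-basis type divisors, whereas your construction only produces those compatible with $\ord_E$; for a non-compatible $\Delta$ the restriction $\widetilde{\Delta}|_E$ need not be a basis type divisor of $W_{\bullet}$, so the bound by $\delta_{Z'}(E,D_E,W_{\bullet})$ does not apply to it. The standard repair (and what \cite{AZ22} actually does) is to rewrite $\delta_{Z,m}$ as $\inf_F A_{X,D}(F)/S_m(V_{\bullet};F)$ over valuations $F$ whose center contains $Z_0$, and for each such $F$ choose $\Delta$ compatible with \emph{both} $\cF_{\ord_E}$ and $\cF_{\ord_F}$ (the two-filtration lemma, reproved equivariantly as Lemma~\ref{basis} in this paper): the $E$-compatibility makes the refinement argument work, while the $F$-compatibility guarantees $\ord_F(\Delta)=S_m(V_{\bullet};F)$, which is what is needed to bound $A_{X,D}(F)/S(V_{\bullet};F)$ from below. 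Without one of these devices the argument does not control arbitrary basis type divisors and the claimed infimum is not reached.
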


Due to the natural torus action on $\P^1$-bundles, we also recall the equivalence between K-stability and the notion of $\bT$-equivariant K-stability.

\begin{thm}\cite[Theorem E]{LX18}, \cite[Theorem 1.4]{LWX21}, \cite[Theorem 1.1]{Zhuang21}
\label{equiv}
Let $(X,D)$ be a log Fano pair with a $\bT$-action for $\bT \cong \G_m^r$ a torus group. Then the K-semistability (resp. K-polystability) of $(X,D)$ is equivalent to the $\bT$-equivariant K-semistability (resp $\bT$-equivariant K-polystability) of $(X,D)$.
\end{thm}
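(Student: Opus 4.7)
The forward implication is immediate, since any $\bT$-invariant divisor over $X$ is in particular a divisor over $X$. The substance of the theorem lies in the reverse direction, and my plan is to produce, for an arbitrary divisorial valuation $v$ over $X$, a $\bT$-invariant valuation $v_0$ satisfying $\beta_{X,D}(v_0) \leq \beta_{X,D}(v)$. Granted this, $\bT$-equivariant K-semistability applied to $v_0$ will immediately give $\beta_{X,D}(v) \geq 0$.

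First I would fix a generic one-parameter subgroup $\lambda \colon \G_m \to \bT$ and form the family of conjugate valuations $v_t := \lambda(t) \cdot v$; as $t \to 0$ this family degenerates flatly (in the sense of a Rees-type degeneration of the valuation ideals) to a valuation $v_\lambda$ obtained by passing to $\lambda$-weight initial terms on each local ring $\cO_{X,\eta}$. For generic $\lambda$ the resulting $v_\lambda$ is $\bT$-invariant. The required comparison then splits into two pieces: $S_{X,D}(v_\lambda) = S_{X,D}(v)$, following from the fact that the $\lambda$-grading preserves the dimensions of the filtered pieces $\cF_v^t H^0(-m(K_X+D))$, so the associated graded algebras $\gr_v$ and $\gr_{v_\lambda}$ of the section ring have identical Hilbert functions; and $A_{X,D}(v_\lambda) \leq A_{X,D}(v)$, following from lower semicontinuity of log discrepancies under equivariant flat degenerations, using the Jonsson-Mustata and Boucksom-de Fernex-Favre-Urbinati description of $A$ as an infimum of divisorial log discrepancies on models. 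Combining these yields $\beta_{X,D}(v_\lambda) \leq \beta_{X,D}(v)$, completing the K-semistable case.

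For K-polystability, the additional claim is that any divisor $E$ over $X$ with $\beta_{X,D}(E) = 0$ must be of product type. Applying the construction above to $v = \ord_E$, K-semistability already forces $\beta(v_\lambda) = 0$, and then equivariant K-polystability forces the $\bT$-invariant valuation $v_\lambda$ to be of product type, i.e. induced by a one-parameter subgroup of $\Aut(X,D)$. The forced equality in the log discrepancy inequality essentially pins down $v$ up to the torus action; combined with reductivity of $\Aut^\circ(X,D)$ (a consequence of K-polystability), this lets me conclude that the Rees algebra of $E$ is a torus twist of a product Rees algebra, and hence that $TC(E)$ is itself a product test configuration.

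The main obstacle will be rigorously justifying the pair of identities $S(v_\lambda) = S(v)$ and $A(v_\lambda) \leq A(v)$ and then controlling the equality case needed for the polystability argument. For the $S$-invariant one has to check that the $\lambda$-initial construction genuinely preserves the relevant Newton-Okounkov body data on the section ring of $-(K_X+D)$. For the $A$-invariant one must work with the regularization of log discrepancies via log resolutions and analyze how these behave under equivariant flat families of valuations, which is where the technology of \cite{JM12} and \cite{BdFFU15} becomes essential. The polystable step is the most delicate: one must upgrade an asymptotic equality of invariants into an actual isomorphism of test configurations modulo the torus action, and this is precisely the point where the reductivity input from K-polystability is unavoidable.
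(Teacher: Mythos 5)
This statement is not proved in the paper at all: it is quoted as a known black box, with the proof deferred to \cite{LX18}, \cite{LWX21}, and \cite{Zhuang21}. So there is no in-paper argument to compare against; what you have written is an attempt to reprove a genuinely deep theorem from scratch, and as it stands it has a real gap.

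The gap is in the central claim $S_{X,D}(v_\lambda)=S_{X,D}(v)$. If $v_\lambda$ is the initial-term valuation $v_\lambda(f)=\min_i v(f_i)$ over the weight decomposition $f=\sum_i f_i$, then $v_\lambda\le v$ on every section, so $\cF_{v_\lambda}^t H^0(-m(K_X+D))\subseteq\cF_v^t H^0(-m(K_X+D))$, and this containment is strict in general: a section can vanish to higher order along $v$ than any of its weight components does (cancellation), so the $\bT$-invariant filtered piece genuinely loses dimension. Your justification --- ``the $\lambda$-grading preserves the dimensions of the filtered pieces'' --- confuses two different limits: the flat limit of the subspaces $\lambda(s)\cdot\cF_v^t$ does preserve dimension, but that limit is not the filtration associated to $v_\lambda$ (nor, in general, to any valuation). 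What one actually gets from the naive construction is $S(v_\lambda)\le S(v)$ together with $A(v_\lambda)\le A(v)$, and these push $\beta$ in opposite directions, so no comparison $\beta(v_\lambda)\le\beta(v)$ follows. (There is also the smaller issue that $\min_i v(f_i)$ is a priori only a semivaluation/filtration; one must argue it is a valuation, e.g.\ that the associated graded is a domain.) This is precisely why the published proofs do not proceed this way: Zhuang's argument for the semistable case goes through the uniqueness of the optimal destabilizing center (hence its invariance under any connected group), and the polystable case in \cite{LWX21, Zhuang21} goes through special test configurations, uniqueness of K-polystable degenerations, and reductivity of the stabilizer. Your polystability paragraph (``the forced equality essentially pins down $v$ up to the torus action'') is a placeholder rather than an argument and would need all of that machinery to be made precise. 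Since the paper only needs the theorem as an imported result, the correct move here is to cite it, not to reprove it.
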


\section{Blow-ups of Projective Compactifications of Proportional Line Bundles}
\label{secgeo}
Let $V$ be a smooth, $(n-1)$-dimensional Fano variety. We construct an $n$-dimensional Fano variety from the data of $V$, a choice of an ample line bundle $\cL$ on $V$ such that $r \cL \sim_{\Q} -K_V$ for some proportionality constant $r>1$, and a divisor $B$ on $V$ such that $B$ is smooth and $B \sim_{\Q} l \cL $ for some  proportionality constant $0 \leq l < r+1$.

Firstly we consider $X$, the projective compactification of the total space of the line bundle $\cL$. $X$ can be geometrically realized as the projectivization $\P (\cL \oplus \cO_V)$ of the total space of the vector bundle $\cL \oplus \cO_V$. The morphism from the vector bundle structure $\cL \oplus \cO_V \to V$ induces a morphism $\phi: X \to V$, giving $X$ the structure of a $\P^1$-bundle over $V$. With this structure, we denote the image of the zero section as $V_0$. We have the following formula for the anti-canonical divisor $-K_X$ on $X$ in relation to that of $V$:
\[
    -K_X = 2H +\phi^* \left(-K_V - \text{det} (\cL \oplus \cO_V) \right) = 2H + \frac{r-1}{r}  \phi^* ( -K_V )
\]
where $H$ denotes the relative hyperplane section from the $\P^1$-bundle structure of $X$. Thus, we see that $X$ is also Fano exactly when $r > 1$. We also compute the anti-canonical volume of $X$, that is, the top intersection power of $-K_X$, as a function of $r, n,$ and the anti-canonical volume of $V$:

\begin{lemma}
    $X$ is a Fano variety. Moreover, the anti-canonical volume of $X$ is 
    \[
    \vol(X) := (-K_X)^n = \frac{(r+1)^n - (r-1)^n}{r^{n-1}} \vol(V).
    \]
\end{lemma}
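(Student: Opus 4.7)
The lemma has two components: $X$ is Fano and a closed form for $(-K_X)^n$. My plan is to handle them in this order using the given expression $-K_X = 2H + \frac{r-1}{r}\phi^*(-K_V)$. Since $X$ is smooth and projective, the Fano property reduces to showing $-K_X$ is ample, which I would verify via Kleiman's criterion. Observe that $H = \cO_{\P(\cL \oplus \cO_V)}(1)$ is nef (because $\cL \oplus \cO_V$ is a direct sum of nef line bundles, using that $\cL$ is ample) and is $\phi$-relatively ample. For an irreducible curve $C \subset X$: either $C$ is contained in a fiber of $\phi$, in which case $H \cdot C > 0$ and $\phi^*(-K_V) \cdot C = 0$; or $\phi_*C$ is a nonzero $1$-cycle, in which case $H \cdot C \geq 0$ and $(-K_V) \cdot \phi_*C > 0$ by ampleness of $-K_V$. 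Combined with $r > 1$ so that $\frac{r-1}{r} > 0$, either case forces $-K_X \cdot C > 0$.

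For the volume, I would expand binomially
\[
(-K_X)^n = \sum_{k=0}^n \binom{n}{k}\, 2^k \left(\tfrac{r-1}{r}\right)^{n-k} H^k \cdot \phi^*(-K_V)^{n-k}.
\]
The $k=0$ term vanishes since $(-K_V)^n = 0$ on the $(n-1)$-dimensional base. For $k \geq 1$, the Grothendieck relation for the rank-two bundle $E = \cL \oplus \cO_V$ reads $H^2 - \phi^* c_1(E) \cdot H + \phi^* c_2(E) = 0$; since $c_2(E) = 0$, this simplifies to $H^2 = \phi^* c_1(\cL) \cdot H$, giving inductively $H^k = \phi^*(c_1(\cL)^{k-1}) \cdot H$. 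The projection formula together with $\phi_* H = [V]$ (i.e.\ $H$ has degree one on each fiber) then yields
\[
H^k \cdot \phi^*(-K_V)^{n-k} = c_1(\cL)^{k-1} \cdot (-K_V)^{n-k} \text{ on } V,
\]
which equals $\vol(V)/r^{k-1}$ after substituting $\cL \sim_\Q -K_V/r$.

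Assembling,
\[
(-K_X)^n = \frac{\vol(V)}{r^{n-1}} \sum_{k=1}^n \binom{n}{k}\, 2^k\, (r-1)^{n-k}.
\]
Adding and subtracting the missing $k = 0$ summand $(r-1)^n$ completes the binomial $(2 + (r-1))^n = (r+1)^n$, producing the stated formula $\frac{(r+1)^n - (r-1)^n}{r^{n-1}} \vol(V)$.

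The argument is essentially routine, so I do not anticipate a serious obstacle; the content is a Grothendieck relation computation plus a Kleiman cone check. The most delicate bookkeeping point is justifying that $H$ is globally nef rather than only $\phi$-relatively ample, which rests on the nefness of the vector bundle $\cL \oplus \cO_V$ under the hypothesis that $\cL$ is ample.
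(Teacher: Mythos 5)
Your volume computation is correct and is essentially the paper's: the paper derives $H^k=\phi^*(\cL)^{k-1}\cdot H$ from $H=V_0+\phi^*\cL$ and $V_0\cdot H=0$, which is exactly your Grothendieck relation with $c_2(\cL\oplus\cO_V)=0$, and the binomial bookkeeping is identical. No issues there.

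The ampleness step is where you diverge from the paper, and as written it has a gap. Checking $-K_X\cdot C>0$ for every irreducible curve $C$ establishes only that $-K_X$ is \emph{strictly nef}; Kleiman's criterion requires positivity on $\overline{NE}(X)\setminus\{0\}$, and strictly nef divisors need not be ample (Mumford's example on a ruled surface). Fortunately your dichotomy survives the passage to the closed cone: for $\alpha\in\overline{NE}(X)\setminus\{0\}$, either $\phi_*\alpha=0$, in which case $\alpha$ lies in $\ker\phi_*=\R f$ ($f$ the fiber class) and hence in $\R_{>0}f$ because $\overline{NE}(X)$ is salient and contains $f$, giving $-K_X\cdot\alpha=2H\cdot\alpha>0$; or $\phi_*\alpha\in\overline{NE}(V)\setminus\{0\}$, in which case Kleiman applied to the ample $-K_V$ on $V$ gives $\phi^*(-K_V)\cdot\alpha=(-K_V)\cdot\phi_*\alpha>0$ while $H\cdot\alpha\ge 0$ by nefness of $\cL\oplus\cO_V$. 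With that one sentence added your argument closes. For comparison, the paper takes a different route: it checks positivity only on $\bT$-invariant curves (fibers and curves in $V_0\sqcup V_\infty$), concludes strict nefness, combines this with bigness from the volume formula, and upgrades to ampleness via the basepoint-free theorem (the semiample contraction of $-K_X$ can contract no curve). Your approach, once repaired, is self-contained and avoids both the implicit degeneration to $\bT$-invariant curves and the appeal to the basepoint-free theorem, at the cost of the small cone-theoretic argument above.
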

\begin{proof}
    The anti-canonical divisor of $X$ is 
    \[
    -K_X = 2H +\phi^* (-K_V) - \text{det} (\cO_V \oplus \cL)
    \]
    where $H$ is the relative hyperplane section. Since $\cL \sim \frac{-1}{r} K_V$, the above simplifies to 
    \[
    -K_X = 2H +\frac{r-1}{r} \phi^*(-K_V)
    \]
    We also have that $H = V_0 +\phi^*(\cL)$ and thus 
    \begin{align*}
    H^2 &= V_0 \cdot H + \phi^* (\cL) \cdot H \\
        &= \phi^*(\cL) \cdot H 
    \end{align*}
    and, more generally,
    \[
    H^k = \phi^*(\cL)^{k-1} \cdot H
    \]
    From this, we calculate $(-K_X)^n$
    \begin{align*}
        (-K_X)^n &= \left[ 2H + \frac{r-1}{r} \phi^*(-K_V) \right]^n \\
        &= \sum_{k=0}^n \binom{n}{k} 2^{n-k} (\frac{r-1}{r})^k H^{n-k} \cdot \phi^*(-K_V)^k \\
        &= \sum_{k=0}^{n-1} \binom{n}{k} 2^{n-k} (\frac{r-1}{r})^k \phi^*(\cL)^{n-k-1} \cdot H \cdot \phi^*(-K_V)^k \\
        &= \sum_{k=0}^{n-1} \binom{n}{k} 2^{n-k} (\frac{r-1}{r})^k (\frac{1}{r})^{n-k-1} \phi^*(-K_V)^{n-k-1} \cdot H \cdot (-K_V)^k \\
        &= \sum_{k=0}^{n-1} \binom{n}{k} 2^{n-k} \frac{(r-1)^k}{r^{n-1}} H \cdot \phi^*(-K_V)^{n-1} \\
        &= \frac{(2+r-1)^n - (r-1)^n}{r^{n-1}} \vol(V) \\
    \end{align*}
In particular, we see that $-K_X$ has strictly positive top intersection power. 

Now, we consider the intersection of $-K_X$ with effective classes of curves on $X$. In particular, since $-K_X$ is invariant under the $\bT$-action of the $\P^1$-bundle, we consider intersection products of $-K_X$ with effective classes of curves fixed by the $\bT$-action. Since $\bT$ is $1$ dimensional, a $\bT$-fixed curve is either the closure of a single orbit or a curve that is fixed point-wise. The closure of a single orbit would be a fibre under the $\P^1$-bundle structure; a curve that is fixed point-wise would be contained in the fixed locus of the $\bT$-action, which is $V_0 \sqcup V_\infty$. Thus, we concern ourselves with the following classes of curves: curves $c_0 \subset V_0$, curves $c_\infty \subset V_\infty$, and fibres $f$ of the $\P^1$-bundle structure. We have the following intersection products:
\begin{align*}
    -K_X \cdot c_0 &=  (2V_\infty +\frac{r-1}{r} \phi^*(-K_V)) \cdot c_0 = \frac{r-1}{r} \phi^*(-K_V) \cdot c_0 > 0 \\
    -K_X \cdot c_\infty &=  (2V_0 +\frac{r+1}{r} \phi^*(-K_V)) \cdot c_\infty = \frac{r+1}{r} \phi^*(-K_V) \cdot c_\infty > 0 \\
    -K_X \cdot f &=  (2H +\frac{r-1}{r} \phi^*(-K_V)) \cdot f = 2H \cdot f > 0 
\end{align*}

Thus, we see $-K_X$ is strictly nef, and thus the computation of the top intersection power of $-K_X$ shows that it is big. Thus, by the Basepoint-free Theorem (see \cite[Theorem 3.3]{KM98}), $-K_X$ is ample.
\end{proof}

With $X$ Fano, it is natural to question whether $X$ is K-semistable. In this case, a quick computation of a specific $\beta$-invariant shows that such $X$ is always K-unstable. This was previously shown in \cite{ZZ22}; however, we include the computation of the $\beta$-invariant here as it is indicative of the general procedure we will apply in Section~\ref{other} to show various blow-ups of $X$ are also K-unstable. 

\begin{prop}
    $X$ is $K$-unstable, with $\beta(V_0) < 0$.
\end{prop}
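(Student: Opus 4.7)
The plan is to compute $\beta_X(V_0) = A_X(V_0) - S_X(V_0)$ directly and verify it is negative. Since $V_0$ is a smooth prime divisor on the smooth variety $X$, we have $A_X(V_0) = 1$ immediately, so the task reduces to showing $S_X(V_0) > 1$. Using the relation $V_0 = H - \phi^*\cL$ from the preceding lemma, I rewrite
\[
-K_X - tV_0 = (2-t)H + \frac{r-1+t}{r}\phi^*(-K_V).
\]
Intersecting with a fiber $f$ of $\phi$ gives $2-t$, so $-K_X - tV_0$ fails to be pseudoeffective for $t > 2$; at $t=2$, the class simplifies to $\frac{r+1}{r}\phi^*(-K_V)$, which is pseudoeffective, so the pseudoeffective threshold is exactly $\tau(V_0) = 2$.

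Next I verify that $-K_X - tV_0$ is nef on $[0,2]$ by the same intersection argument used in the preceding lemma: it suffices to check the $\bT$-invariant curve classes (fibers of $\phi$ and curves in $V_0$ or $V_\infty$), and all three intersections are visibly nonnegative on this range. So in this range the volume equals the top self-intersection, and the same binomial expansion used for $\vol(X)$ (using $H^{n-k}\cdot\phi^*(-K_V)^k = \vol(V)/r^{n-k-1}$ for $k\leq n-1$) collapses via $\frac{2-t}{r}+\frac{r-1+t}{r}=\frac{r+1}{r}$ to
\[
\vol(-K_X - tV_0) = \frac{\vol(V)}{r^{n-1}}\bigl[(r+1)^n - (r-1+t)^n\bigr].
\]

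Integrating from $0$ to $2$ and dividing by $\vol(X) = \frac{\vol(V)}{r^{n-1}}[(r+1)^n - (r-1)^n]$ gives the closed form
\[
S_X(V_0) = \frac{(r+1)^n(2n+1-r) + (r-1)^{n+1}}{(n+1)\bigl[(r+1)^n - (r-1)^n\bigr]}.
\]
After clearing denominators, $S_X(V_0) > 1$ is equivalent to $(r-1)^n(r+n) > (r+1)^n(r-n)$. This is immediate when $r \leq n$ since then the right side is nonpositive. For $r > n$ I set $g(r) := n\log(r-1) + \log(r+n) - n\log(r+1) - \log(r-n)$ and compute $g'(r) = 2n\bigl[\frac{1}{r^2-1} - \frac{1}{r^2-n^2}\bigr] < 0$ for $n \geq 2$, together with $\lim_{r\to\infty} g(r) = 0$; so $g(r) > 0$ throughout $(n, \infty)$, completing the inequality and hence giving $\beta_X(V_0) < 0$. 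The only non-routine step is this final inequality; the rest is a direct intersection-theoretic calculation parallel to the preceding lemma.
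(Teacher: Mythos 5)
Your proof is correct and follows essentially the same route as the paper: $A_X(V_0)=1$, the pseudoeffective threshold is $2$, $-K_X-tV_0$ is nef on $[0,2]$ so its volume is $\frac{\vol(V)}{r^{n-1}}\bigl[(r+1)^n-(r-1+t)^n\bigr]$, and one integrates. The only divergence is the final elementary inequality: you reduce $S_X(V_0)>1$ to $(r-1)^n(r+n)>(r+1)^n(r-n)$ and verify it for all $r>1$, $n\geq 2$, whereas the paper writes $S_X(V_0)-1$ as $\frac{f(r-1)-f(r+1)}{(n+1)[(r+1)^n-(r-1)^n]}$ with $f(x)=x^{n+1}-(n+1)x^n$ — an identity that is off by $2(n+1)(r-1)^n$ in the numerator (e.g.\ for $n=r=2$ it would give a negative value while $S_X(V_0)=7/6$), and whose monotonicity argument only applies when $r+1\leq n$. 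Your version of this last step is the more careful one; the extra case $r>n$ you treat is vacuous by Kobayashi--Ochiai but harmless.
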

\begin{proof}
\begin{align*}
    \beta_X(V_0) = A_X(V_0) - S_X(V_0) = 1 - S_X(V_0)
\end{align*}
Thus it suffices to show that $S_X(V_0) > 1$. We note that $V_0$ has pseudo-effective threshold $\tau = 2$, and that for $ 0 \leq t \leq 2$, $-K_X - t V_0$ is nef, thus:
\begin{align*}
    S_X(V_0)    &= \frac{1}{\vol(X)} \int_0^2 (-K_X -t V_0)^n dt \\
                &= \frac{1}{\vol(X)} \int_0^2 \frac{\vol(V)}{r^{n-1}} \left( (r+1)^n - (r+t-1)^n \right) dt \\
                &= \frac{\vol(V)}{r^{n-1} \vol(X)} \left( 2(r+1)^n - \frac{(r+1)^{n+1} - (r-1)^{n+1}}{n+1} \right) \\
                &= \frac{1}{(r+1)^n - (r-1)^n} \left( 2(r+1)^n - \frac{(r+1)^{n+1} - (r-1)^{n+1}}{n+1} \right) \\
                &= \frac{1}{(n+1)((r+1)^n - (r-1)^n)} \left(f(r-1) - f(r+1)\right) + 1 \\
\end{align*}
Where $f(x) = x^{n+1} -(n+1)x^n$ which is strictly decreasing on the range $ 0< x < n$, thus showing the first term is positive and thus $S_X(V_0) > 1$ as desired.
\end{proof}

Due to the $\P^1$-bundle structure of $\phi: X \to V$, we have a natural $\bT$-action on $X$ where $\bT \cong \G_m$ acts fibre-wise, and on each fiber of $\phi$, $\bT$ acts with the standard $\G_m$-action on $\P^1$. Given this action, $X$ has two divisors that are point-wise fixed by $\bT$: $V_0$ and the image of a positive section $s_\infty: V \to X$ of the $\P^1$-bundle structure, which we will denote as $V_\infty$ (hereafter also referred to as the infinity section).

We continue with the construction. We label the the image of the divisor $B$ under $s_\infty$ as $B_\infty$. This image is a codimension $2$ subvariety of $X$ isomorphic to $B$ since $\s_\infty$ is an embedding. We denote the blow-up of $X$ along $B_\infty$ as $\pi: Y = \Bl_{B_\infty} X \to X$, and the exceptional divisor as $E$. For ease of notation, we also denote the strict transform of the pullback of $B$ along $\phi$ as $F := \phi^{-1}_*(B)$ and the strict transform of $V_\infty$ as $\bVi := \phi^{-1}_*(V_\infty)$. By abuse of notation, we will denote the pullbacks $\pi^*(H)$ and $\pi^*(V_0)$ as $H$ and $V_0$ respectively. We note that $Y \to V$ has the structure of a conic bundle, and the fibers are reducible conics precisely over $B \subset V$. We note that, since $B_\infty$ is fixed under the $\bT$-action on $X$, said action lifts to an action on $Y$, with the locus of fixed points of the $\bT$-action is precisely $\bVi \sqcup V_0 \sqcup (E \cap F)$.

\begin{lemma}
     $Y$ is a Fano variety for $0 \leq l < r+1$ (where we interpret $l = 0$ to mean $Y = X$). Moreover, the anti-canonical volume of $Y$ is
        \[
        \vol(Y) = \begin{cases}
                    \left(\frac{r^n -(r+1-l)^n}{l-1} + r^n -(r-1)^n\right)\frac{\vol(V)}{r^{n-1}}, &\text{if } l\neq 1  \\
                    \left( nr^{n-1} + r^n -(r-1)^n\right)\frac{\vol(V)}{r^{n-1}},  &\text{if } l=1   
                    \end{cases}
        \]
    
    In particular, when $ l = 2$, the anti-canonical volume of $Y$ is 
    \[
    \vol(Y) = \frac{2(r^n - (r-1)^n)}{r^{n-1}} \vol(V).
    \]
\end{lemma}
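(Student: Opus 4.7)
The plan is to (i) compute $-K_Y$ via the blow-up formula, (ii) verify $-K_Y$ is strictly nef by intersecting against the $\bT$-fixed irreducible curves on $Y$, (iii) evaluate $(-K_Y)^n$ using Segre classes on the exceptional divisor, and (iv) conclude ampleness via the Basepoint-free Theorem as was done for $X$.

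For (i), since $B_\infty$ is smooth of codimension $2$ in $X$, $-K_Y = \pi^*(-K_X) - E$. Combining this with $-K_X = 2H + (r-1)\phi^*\cL$, $V_\infty \sim H$, $\pi^*V_\infty = \bVi + E$, and $V_0 = H - \phi^*\cL$ gives
\[
-K_Y \;=\; 2V_0 + (r+1)\,\pi^*\phi^*\cL - E \;=\; 2\bVi + E + (r-1)\,\pi^*\phi^*\cL.
\]
For (ii), the $\bT$-fixed irreducible curves on $Y$ split into pointwise-fixed classes---curves in $V_0 \cong V$, in $\bVi \cong V$, and in $E \cap F$---together with single-orbit closures, namely the smooth fibers of $Y \to V$ over $V \setminus B$ and the two rational components of each reducible fiber over $B$. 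Using $V_0|_{V_0} = -\cL$, $\bVi|_{\bVi} \sim_\Q (1-l)\cL$, and $E|_{\bVi} \sim_\Q B$, one finds that $-K_Y|_{V_0} = (r-1)\cL$ and $-K_Y|_{\bVi} = (r+1-l)\cL$, both positive on effective curves precisely when $0 \le l < r+1$; the remaining fiber and $E \cap F$ classes are checked by direct intersection against the expressions for $-K_Y$.

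For (iii) I would expand binomially,
\[
(-K_Y)^n = (-K_X)^n + \sum_{k=2}^{n} \binom{n}{k}(-1)^k \,(-K_X)^{n-k} \cdot \pi_*(E^k),
\]
and use the codimension-$2$ pushforward identity $\pi_*(E^k) = (-1)^{k-1}(\pi|_E)_* \xi^{k-1}$ expressed via Segre classes of $N := N_{B_\infty/X}$. The short exact sequence $0 \to N_{B_\infty/V_\infty} \to N \to N_{V_\infty/X}|_{B_\infty} \to 0$, combined with $N_{V_\infty/X} \cong \cL$ and $N_{B_\infty/V_\infty} \sim_\Q l\cL|_B$, yields $c(N) = (1+\cL|_B)(1+l\cL|_B)$, so $s(N)$ factors as a product of two geometric series when $l \ne 1$ and as a doubled series when $l = 1$. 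Combined with $(-K_X)|_{B_\infty} = (r+1)\cL|_B$, $B \cdot \cL^{n-2} = l\vol(V)/r^{n-1}$, and the known formula for $\vol(X)$, the binomial sum telescopes to the two closed forms in the statement, and positivity of $(-K_Y)^n$ together with (ii) gives (iv). The main technical obstacle is the book-keeping of this combinatorial sum, especially confirming that the $l = 1$ case arises as the correct limit of the $l \ne 1$ formula.
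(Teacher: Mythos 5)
Your proposal is correct, and parts (i), (ii) and (iv) — the blow-up formula for $-K_Y$, the reduction of nefness to intersections with $\bT$-invariant curves, and the conclusion via bigness plus the Basepoint-free Theorem — follow essentially the same route as the paper (your restrictions $-K_Y|_{V_0}=(r-1)\cL$ and $-K_Y|_{\bVi}=(r+1-l)\cL$ are consistent with the paper's curve-by-curve computation, and you are in fact slightly more careful in explicitly flagging the curves in $E\cap F$). The genuine difference is in step (iii). The paper avoids Segre classes entirely: it rewrites $-K_Y = \pi^*V_0 + \pi^*\phi^*(-K_V) + \bVi$ and exploits the disjointness $V_0\cdot\bVi=0$ to split $(-K_Y)^n$ into two binomial sums whose monomials $V_0^k\cdot\pi^*\phi^*(-K_V)^{n-k}=(-1/r)^{k-1}\vol(V)$ and $\bVi^k\cdot\pi^*\phi^*(-K_V)^{n-k}=((1-l)/r)^{k-1}\vol(V)$ are evaluated by iterating the self-intersection relations; the two geometric series then sum directly to the stated closed forms. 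You instead expand $(\pi^*(-K_X)-E)^n$ and push forward powers of $E$ via $s(N_{B_\infty/X})$, with $c(N)=(1+\cL|_B)(1+l\cL|_B)$ from the normal bundle sequence. I checked that your sum does telescope to the stated formulas (including the $l=1$ case as the limit $\tfrac{l^{m+1}-1}{l-1}\to m+1$ of the Segre coefficients), so the approach is sound; it is more mechanical and demands more combinatorial bookkeeping, but it has the advantage of being a completely standard blow-up computation that does not rely on noticing the disjoint decomposition of $-K_Y$. One small point of care: your identity $\pi_*(E^k)=(-1)^{k-1}(\pi|_E)_*\xi^{k-1}$ presupposes $\xi=-E|_E=c_1(\cO_E(1))$ and $(\pi|_E)_*\xi^{j}=s_{j-1}(N)$ for the codimension-$2$ center; you should state these conventions explicitly, since a sign slip here would silently corrupt the final formula.
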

\begin{proof}
    \begin{align*}
        (-K_Y)^n    &= (2 \pi^* H + \frac{r-1}{r} \pi^*\phi^*(-K_V) -E)^n \\
                    &= (\pi^* H + \frac{r-1}{r}\pi^*\phi^*(-K_V) + \bVi )^n \\
                    &= (\pi^* V_0 + \pi^*\phi^*(-K_V) +\bVi)^n\\
                    &= (\pi^* V_0 + \pi^*\phi^*(-K_V))^n + (\pi^*\phi^*(-K_V) + \bVi)^n -(\pi^*\phi^*(-K_V))^n \\
                    &= (V_0 + \pi^*\phi^*(-K_V))^n + (\pi^*\phi^*(-K_V) + \bVi)^n \\
    \end{align*}
    with $V_0, \bVi$ as denote above. Now, computing monomial products, we have
    \begin{align*}
        V_0^k \cdot \pi^*\phi^*(-K_V)^{n-k}  &= V_0^{k-1} \cdot V_0 \cdot \pi^*\phi^*(-K_V)^{n-k} \\
        &= (H - \frac{1}{r} \pi^*\phi^*(-K_V))^{k-1} \cdot V_0 \cdot \pi^*\phi^*(-K_V)^{n-k} \\
        &= (- \frac{1}{r} \pi^*\phi^*(-K_V))^{k-1} \cdot V_0 \cdot \pi^*\phi^*(-K_V)^{n-k} \\
        &= (\frac{-1}{r})^{k-1} \vol(V) \\
    \end{align*}
    and, similarly,
    \begin{align*}
        \bVi^k \cdot \pi^*\phi^*(-K_V)^{n-k}  &= \bVi^{k-1} \cdot \bVi \cdot \pi^*\phi^*(-K_V)^{n-k} \\
        &= (V_0+\frac{1}{r}\pi^*\phi^*(-K_V) -E)^{k-1} \cdot \bVi \cdot \pi^*\phi^*(-K_V)^{n-k} \\
        &= (\frac{1}{r}\pi^*\phi^*(-K_V) -E)^{k-1} \cdot \bVi \cdot \pi^*\phi^*(-K_V)^{n-k} \\
        &= (\frac{1-l}{r})^{k-1} \vol(V) \\
    \end{align*}
    Substituting yields
    \begin{align*}
        (-K_{Y})^n &= \sum_{i=1}^n \binom{n}{i} \bVi^i \pi^*\phi^*(-K_V)^{n-i} + \sum_{j=1}^n \binom{n}{j} V_0^j \pi^*\phi^*(-K_V)^{n-j} \\
        &= \sum_{i=1}^n \binom{n}{i} (\frac{1-l}{r})^{i-1}\vol(V) + \sum_{j=1}^n \binom{n}{j} (\frac{-1}{r})^{j-1}\vol(V) \\    
    \end{align*}
    \[
     = \begin{cases}
            \left(\frac{r^n -(r+1-l)^n}{l-1} + r^n -(r-1)^n\right)\frac{\vol(V)}{r^{n-1}}, &\text{if } l\neq 1  \\
            \left( nr^{n-1} + r^n -(r-1)^n\right)\frac{\vol(V)}{r^{n-1}},  &\text{if } l=1   
        \end{cases}
    \]

    Note that, under our assumptions on $r$ and $V$, the top intersection power of $-K_Y$ is positive when $0 \leq l < r + 1$. 
    
     Now, we consider the intersection of $-K_Y$ with effective classes of curves on $Y$. In particular, since $-K_Y$ is invariant under the $\bT$-action, we consider intersection products of $-K_Y$ with effective classes of curves fixed by the $\bT$-action. A curve fixed by the $\bT$-action will either be the closure of a single orbit or a curve that is fixed point-wise. A curve that is the closure of a single orbit would either be the strict transform of a fibre from the $\P^1$-bundle structure $X \to V$ or a fibre of the $\P^1$-bundle $E \to B_\infty$. A curve that is fixed point-wise would be contained in the fixed locus of the $\bT$-action, which is $\bVi \sqcup V_0 \sqcup (E \cap F)$. Thus, we consider intersection products of $-K_Y$ with curves $c_0 \in V_0$, curves $c_\infty \in \bVi$, fibres $e \in E$, fibres $f$ of the conic bundle structure over $V \setminus B$, and fibres that lie in $F$ in the class $f-e$. We have the following intersection products:

\begin{align*}    
    -K_Y \cdot c_0 &=  (\pi^* H + \frac{r-1}{r}\pi^*\phi^*(-K_V) + \bVi) \cdot c_0 = \frac{r-1}{r} \pi^*\phi^*(-K_V) \cdot c_0 > 0 \\
    -K_Y \cdot c_\infty &=  (2\pi^* V_0 + \frac{r+1}{r}\pi^*\phi^*(-K_V) - E) \cdot c_\infty = \frac{r-1}{r} \pi^*\phi^*(-K_V) \cdot c_\infty > 0 \\
    -K_Y \cdot f &=  (2H +\frac{r-1}{r} \pi^*\phi^*(-K_V) -E) \cdot f = 2H \cdot f  = 2 > 0 \\
    -K_Y \cdot e &=  (2H +\frac{r-1}{r} \pi^*\phi^*(-K_V) -E) \cdot e = - E \cdot e = 1 > 0 \\
    -K_Y \cdot (f - e)  &=  (2H +\frac{r-1}{r} \pi^*\phi^*(-K_V) -E) \cdot (f-e) = 2H - E \cdot (f-e) = 1 > 0 
\end{align*}

Thus, we see $-K_Y$ is strictly nef, and thus the computation of the top intersection power of $-K_Y$ shows that it is big, and thus ample.
\end{proof}

For the next few sections, until Section~\ref{other}, we will work with this construction under the additional assumption that $ l = 2$. We note that there exists an alternate construction of $Y$ when $l = 2$ related to Fano double covers, see \cite[Section 2]{CDGFKMG23}. One property of such $Y$ that we use frequently is the existence of an involution $\iota: Y \to Y$ such that $\iota(\bVi) = V_0$, $\iota(E) = F$, and $\iota$ respects the conic bundle structure of $Y$ over $V$. The existence of such an involution is readily apparent from the point of view of \cite{CDGFKMG23}. From this involution we see that $Y$ permits a second contraction morphism $Y \to X$ distinct from $\pi$: the blow-down of $F$, i.e. the composition $\pi \circ \iota$ of the involution and the blow-up morphism.

With the construction of $Y$ from the triple $(V,B, \cL)$, we now compute certain key $\beta$-invariants. In particular, given the induced $\bT$-action on $Y$, we can consider the test configurations induced by the coweights of the $\bT$-action. Since the torus action on $Y$ is that of a $1$-dimensional algebraic torus, the co-character lattice $N$ is isomorphic to $\Z$, with $1$ the identity co-character. 

\begin{definition}[Futaki character]
    We define the \textit{Futaki character} on $Y$ to be $Fut|_N : \Z \to \R$ where
    \[
    Fut|_N (i) = \beta_Y(\mathrm{wt}_i)
    \]
    with $\mathrm{wt}_i$ the valuation induced by the coweight $i \in \Z$,
\end{definition}

\begin{lemma}\label{futaki}
    On $Y$, we have $\Fut|_N = 0$. In particular, $\beta (\bVi) = \beta(V_0) = 0$.
\end{lemma}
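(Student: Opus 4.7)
The plan is to exploit the symmetry provided by the involution $\iota: Y \to Y$ (arising from the two blow-down structures on $Y$ in the $l = 2$ case, equivalently the Fano double cover picture of \cite{CDGFKMG23}) together with the $\R$-linearity of the Futaki character on the cocharacter lattice.

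First I would identify the valuations $\text{wt}_i$ concretely. The $\bT$-action on $Y$ has exactly two point-wise fixed divisors, $\bVi$ and $V_0$. For the generator $1 \in N = \Z$, the associated valuation is $\text{wt}_1 = \ord_{\bVi}$ (under the natural sign convention of the $\P^1$-bundle action), while $\text{wt}_{-1} = \ord_{V_0}$. More generally $\text{wt}_i = |i| \cdot \ord_{\bVi}$ for $i > 0$ and $\text{wt}_i = |i| \cdot \ord_{V_0}$ for $i < 0$. By the positive-homogeneity of $A$ and $S$ with respect to scaling of a valuation, it therefore suffices to prove $\beta_Y(\bVi) = \beta_Y(V_0) = 0$; once both vanish, $\Fut|_N(i) = 0$ for every $i \in \Z$.

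Next I would apply the involution. Since $\iota$ is a biregular automorphism of $Y$ that sends $\bVi$ to $V_0$ and satisfies $\iota^*(-K_Y) \sim_\Q -K_Y$, it preserves log discrepancies and the volume function. Hence $A_Y(\bVi) = A_Y(V_0) = 1$ and $S_Y(\bVi) = S_Y(V_0)$, giving $\beta_Y(\bVi) = \beta_Y(V_0)$. Finally I would invoke the $\R$-linearity of the Futaki character on the cocharacter lattice: $\Fut|_N$ is the restriction of an $\R$-linear functional on $N_\R$ (equivalent to a character of $\bT$ on the CM line bundle, and deducible from the Hilbert-function description of the Donaldson-Futaki invariant). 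In particular $\Fut|_N(-1) = -\Fut|_N(1)$, which under the identifications above reads $\beta_Y(V_0) = -\beta_Y(\bVi)$. Combining with the involution equality $\beta_Y(\bVi) = \beta_Y(V_0)$ forces both to be zero, and hence $\Fut|_N \equiv 0$.

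The main potential obstacle is the careful justification of linearity of $\Fut|_N$ across $0 \in N$ within the $\beta$-invariant formulation (since $\beta$ is only positively homogeneous in the valuation, so the two half-rays of $N$ give a priori independent slopes). An alternative that avoids invoking this is a direct volume computation: using the decomposition $-K_Y - t\bVi \equiv (1-t)\bVi + \pi^*V_0 + \pi^*\phi^*(-K_V)$ together with the intersection numbers tabulated in the preceding lemma, and the involution identity $\iota^*\bVi = V_0$, one can evaluate $S_Y(\bVi)$ explicitly and verify $S_Y(\bVi) = S_Y(V_0) = 1$ by direct integration, from which $\beta_Y(\bVi) = \beta_Y(V_0) = 0$ and the linearity of $\Fut|_N$ both follow.
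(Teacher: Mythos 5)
Your proposal is correct, and in fact both of the arguments you describe appear in the paper --- just with the emphasis reversed. The paper's featured proof is your fallback: it writes down the Zariski decompositions of $-K_Y - t\bVi$ and $-K_Y - tV_0$ explicitly (nef for $0 \le t \le 1$; for $1 \le t \le 2$ the positive part is pulled back along one of the two contractions $Y \to X$ and the negative part, $(t-1)E$ resp.\ $(t-1)F$, is supported on the corresponding exceptional locus), uses the involution to see $S_Y(\bVi)=S_Y(V_0)$, and then integrates to get the common value $1$. Your primary argument --- $\beta_Y(\bVi) = \beta_Y(V_0)$ from the involution $\iota$ combined with $\beta_Y(V_0) = -\beta_Y(\bVi)$ from linearity of the Futaki character --- is exactly the ``second, simpler proof'' the paper records in a closing remark. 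On the one point you flag as a potential obstacle (linearity of $\Fut|_N$ across $0 \in N$, given that $\beta$ is only positively homogeneous): this is legitimately settled by the twisting formula the paper invokes elsewhere, namely \cite[Proposition 3.12]{Li22}, $\beta(v_{\mu,b}) = \beta(v_{\mu,0}) + b\Fut(\xi)$; taking $\mu$ to be the trivial valuation gives $\beta(\mathrm{wt}_b) = b\Fut(\xi)$ for every $b \in \Z$, which is precisely the needed linearity on both half-rays. So your short argument is complete and reaches the conclusion with no intersection theory at all; what the paper's computation buys instead is the explicit Zariski decompositions, which are reused later (in Lemma~\ref{refinement} and in Section~\ref{other} for general $l$). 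A minor note: your sign convention $\mathrm{wt}_1 = \ord_{\bVi}$ is the opposite of the paper's ($\mathrm{wt}_{-1} = \ord_{\bVi}$), which is immaterial to the argument.
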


\begin{proof}
The test configuration induced by $\xi$ has $\ord_t$ as its associated valuation under the isomorphism $\C(Y) \cong \C(V)(t)$ induced by the $\bT$-action. This valuation is divisorial with associated divisor $V_0$, so $\Fut(\xi)=\beta_Y(V_0)$. Similarly, the test configuration induced by $-\xi$ has associated divisor $\bVi$. Any other coweight is a multiple of either of these, and $\Fut(b\xi) = b \Fut(\xi)$, it suffices to show $\beta_Y (V_0) = \beta_Y (\bVi) = 0$. We compute both $\beta$-invariants simultaneously. As both $V_0$ and $\bVi$ are prime divisors on $Y$, each has log discrepancy of $1$. Thus it remains to show that $S_Y(V_0) = S_Y(\bVi) = 1$. Let $P_\infty(t), N_\infty(t)$ (resp. $P_0(t), N_0(t)$) be the positive and negative parts of the Zariski decomposition of $-K_Y - t\bVi$ (resp. $-K_Y - tV_0$). Then

\[
P_\infty (t) = 
    \begin{cases}
        -K_Y - t \bVi = (1-t)\bVi +\pi^*\phi^*(-K_V) + V_0,   &\text{if } 0 \leq t \leq 1 \\
        (2-t)H +\frac{r-1}{r} \pi^*\phi^*(-K_V) 
        =\frac{r+1-t}{r} \pi^*\phi^*(-K_V)+(2-t)V_0, &\text{if } 1 \leq t \leq 2
    \end{cases}
\]

\[
N_\infty (t) = 
    \begin{cases}
        0,                                       &\text{if } 0 \leq t \leq 1 \\
        (1-t)E,                                  &\text{if } 1 \leq t \leq 2
    \end{cases}
\]

\[
P_0 (t) = 
    \begin{cases}
        -K_Y - t V_0 = \bVi + \pi^*\phi^*(-K_V) + (1-t)V_0,  &\text{if } 0 \leq t \leq 1 \\
         (2-t)\bVi +\frac{r+1-t}{r} \pi^*\phi^*(-K_V),       &\text{if } 1 \leq t \leq 2
    \end{cases}
\]

\[
N_0 (t) = 
    \begin{cases}
        0,                                &\text{if } 0 \leq t \leq 1 \\
        (t-1) F = (t-1) (\bVi +\frac{1}{r}\pi^*\phi^*(-K_V) - V_0), 
        &\text{if } 1 \leq t \leq 2
    \end{cases}
\]

This can be seen as follows: for the range $0 \leq t \leq 1$, $-K_Y - t\bVi$ and $-K_Y - tV_0$ are both nef, thus the negative parts of their Zariski decompositions are trivial. For $-K_Y - t\bVi$ with $1 \leq t \leq 2$, we observe that $P_\infty$ is the pullback along the contraction $\pi$ of a nef class on $X$ and $N_\infty$ is supported on the exceptional locus of the same contraction. Thus by \cite[Proposition 2.13]{Okawa16}, we see that this is the Zariski decomposition of  $-K_Y - t\bVi$. A similar argument shows that $P_0 + N_0$ is the Zariski decomposition of  $-K_Y - tV_0$, using the contraction $\pi': Y \to X$ that contracts $F$ (this contraction is $\pi \circ \iota$).

We observe that $\iota (P_\infty) = P_0$ and $\iota (N_\infty) = N_0$. From this and the computations that $V_0 \cdot \bVi = 0$ and $V_0^k \cdot \pi^*\phi^*(-K_V)^{n-k} = \bVi^k \cdot \pi^*\phi^*(-K_V)^{n-k}$, we conclude that $S_Y(V_0) = S_Y(\bVi)$, and thus we need only compute one of them (this can also be more directly seen from $\iota(V_0) = \bVi$). 

\begin{align*}
    S_Y(\bVi) &= \frac{1}{\vol(Y)} \int_0^\infty \vol(-K_Y -t\bVi) dt \\
    &= \frac{1}{\vol(Y)} \int_0^2 \left(P_\infty(t)\right)^n dt \\
    &= \frac{1}{\vol(Y)} \int_0^1 ((1-t)\bVi + \pi^*\phi^*(-K_V) + V_0)^n dt \\
    &+ \frac{1}{\vol(Y)} \int_1^2 (\frac{r+1-t}{r}\pi^*\phi^*(-K_V)+(2-t)V_0)^n dt \\
    &= \frac{1}{\vol(Y)} \int_0^1 \frac{2r^n - (r-1)^n - (r+t-1)^n}{r^{n-1}} \vol(V) dt\\
    &+ \frac{1}{\vol(Y)} \int_1^2 \frac{(r+t-1)^n - (r-1)^n}{r^{n-1}} \vol(V) dt \\
    &= \frac{1}{\vol(Y)} \left( \frac{(r-1)^{n+1} -r^{n+1}}{n+1} +2r^n -(r-1)^n \right) \frac{\vol(V)}{r^{n-1}} \\
    &+ \frac{1}{\vol(Y)} \left( \frac{r^{n+1} -(r-1)^{n+1}}{n+1} -(r-1)^n \right) \frac{\vol(V)}{r^{n-1}} \\
    &= 1
\end{align*}

We note that there is a second, simpler proof. Since $\beta_Y(bv) = b\beta_Y(v)$. By the linearity of the Futaki character we have that $\beta_Y(V_0) = \beta_Y(\mathrm{wt}_1) = -\beta_Y(-\mathrm{wt}_1)= -\beta_Y(\mathrm{wt}_{-1}) = -\beta_Y (\bVi)$, and that $\beta_Y (V_0) = \beta_Y(\bVi)$.
\end{proof}

\section{Proof of Main Theorem}

\subsection{K-Semistability}
 In the light of Theorem~\ref{equiv} and the natural $\bT$-action on $Y$, we investigate the $\bT$-equivariant K-semistability of $Y$ in relation to the K-semistability of $(V,aB)$.
\subsubsection{Reverse Implication}

\begin{definition}(vertical and horizontal divisors \cite[Definition 1.8]{Cheltsov08}) 
    Let $D$ be a $\bT$-invariant divisor over $Y$. $D$ is called a \textit{vertical divisor} if the maximal $\bT$-orbit in $D$ has the same dimension as $\bT$, and is otherwise called a \textit{horizontal divisor}.
\end{definition}

\begin{lemma} \label{vertical}
    For $D$ a vertical divisor over $Y$, there exists a vertical divisor $D'$ over $Y$ such that $\beta_Y (D) = \beta_Y (D')$ and $c_Y(D') \cap \bVi$ is nonempty.
\end{lemma}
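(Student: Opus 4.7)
My plan is to split into cases according to which component of the $\bT$-fixed locus of $Y$ the center $c_Y(D)$ meets, then reduce the ``center meets $V_0$'' case to the desired ``center meets $\bVi$'' case via the involution $\iota: Y \to Y$ of Section~\ref{secgeo}. A local analysis in the two affine charts of $\pi: Y \to X$ arising from the blow-up of $B_\infty$ shows that the $\bT$-fixed locus of $Y$ decomposes as the disjoint union $V_0 \sqcup \bVi \sqcup (E \cap F)$, where the last component is a section of the $\P^1$-bundle $E \to B_\infty$ antipodal to the section $E \cap \bVi$ on each fibre. Since $D$ is $\bT$-invariant, $c_Y(D)$ is a $\bT$-invariant closed subvariety of $Y$; and since $D$ is vertical, the closures of generic $1$-dimensional $\bT$-orbits on $D$ push down to $\bT$-invariant curves in $Y$ whose endpoints are $\bT$-fixed points lying in $c_Y(D)$. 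Hence $c_Y(D)$ must meet at least one of $V_0$, $\bVi$, or $E \cap F$.

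If $c_Y(D) \cap \bVi \neq \emptyset$, I would set $D' = D$. If instead $c_Y(D) \cap V_0 \neq \emptyset$, I would set $D' = \iota_* D$. The involution $\iota$ is a biregular automorphism of $Y$ preserving $-K_Y$ and is $\bT$-equivariant up to the inversion $t \mapsto t^{-1}$ of $\bT$ (since it swaps the two fixed points of $\bT$ on a general fibre of $Y \to V$), so the pushforward $\iota_* D$ is again $\bT$-invariant and vertical, satisfies $\beta_Y(\iota_* D) = \beta_Y(D)$, and has center $\iota(c_Y(D))$ meeting $\iota(V_0) = \bVi$ as required.

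The remaining case, where $c_Y(D) \subseteq E \cap F$, is the main technical obstacle. Here $\iota$ acts as an involution on $E \cap F$ (locally by swapping the two normal coordinates with $\bT$-weights $-1$ and $+1$) without moving it out of $E \cap F$, so neither $D$ nor $\iota_* D$ has center meeting $\bVi$. My plan in this case is to exploit the fact that $E$ is itself a $\bT$-invariant vertical divisor on $Y$ whose center $E$ meets $\bVi$ along the section $E \cap \bVi$. Using the refinement of the anti-canonical linear system of $Y$ by $E$ (Definition~\ref{refine}) together with the observation that a $\bT$-invariant valuation centered in $E \cap F$ decomposes according to its weight along $E$, I would construct a $\bT$-invariant vertical divisor $D'$ with the same log discrepancy and expected order of vanishing as $D$ but with center translated along $E$ onto $E \cap \bVi \subset \bVi$. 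The main difficulty I anticipate is verifying $\beta_Y(D') = \beta_Y(D)$ exactly; this should follow from a $\bT$-equivariant volume computation on $Y$ leveraging the conic bundle structure and the explicit local description of $Y$ near $E \cap F$ and $E \cap \bVi$.
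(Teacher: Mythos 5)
Your case decomposition and the treatment of the first two cases match the paper: if $c_Y(D)$ meets $\bVi$ take $D'=D$, and if it meets $V_0$ push forward along the involution $\iota$, which is an automorphism of $Y$ normalizing the $\bT$-action and hence preserves verticality and $\beta$. (The paper phrases this via the valuation $\ord_D\circ\iota$ rather than $\iota_*D$, but it is the same move.) The problem is the remaining case $c_Y(D)\subseteq E\cap F$, which you correctly identify as the crux but do not actually resolve. Your plan to ``translate the center along $E$ onto $E\cap\bVi$'' using the refinement by $E$ is not a construction: there is no operation on valuations provided by Definition~\ref{refine} or by \cite[Theorem 3.3]{AZ22} that moves a center along $E$ while preserving $A_Y$ and $S_Y$, and asserting that such a $D'$ exists with ``the same log discrepancy and expected order of vanishing'' is essentially assuming the conclusion. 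Indeed, one should not expect $A$ and $S$ to be preserved separately by any natural modification here; only their difference is.

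The mechanism the paper uses for this case is different and is worth internalizing: every $\bT$-invariant valuation on $Y$ can be written as a twist $v_{\mu,b}$ of the ``untwisted'' valuation $\mu^*=v_{\mu,0}$, where $\mu=\ord_D|_{\C(V)}$ and $b=v(t)$ in a local trivialization $\C(Y)\cong\C(V)(t)$. By \cite[Proposition 3.12]{Li22}, $\beta(v_{\mu,b})=\beta(v_{\mu,0})+b\,\Fut(\xi)$, and Lemma~\ref{futaki} gives $\Fut|_N=0$, so $\beta_Y(D)=\beta_Y(\mu^*)$. The center of $\mu^*$ is the strict transform of $\phi^{-1}(c_V(\mu))$, hence consists of one-dimensional $\bT$-orbits and either already meets $\bVi$ or lies in $F$, in which case one applies $\iota$ as before. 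So the vanishing of the Futaki character is not incidental background but precisely the ingredient that lets you move the center out of $E\cap F$ at no cost to $\beta$; your proposal never invokes it, and without it (or an equivalent substitute) the argument in this case does not close.
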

\begin{proof}
Let $D$ be a vertical divisor over $Y$. Then $c_Y (D)$ is a $\bT$-invariant subvariety of $Y$.  Either the closed points of $c_Y (D)$ are all fixed by the $\bT$-action or $c_Y (D)$ consists of closures of orbits of points not fixed by $\bT$. In the later case, these closures of orbits are either fibres of $E$ thought of as a $\P^1$-bundle over $B_\infty$, pullbacks of fibres of $X \to V$ away from $B \in V$, or the strict transform of fibres of $X \to V$ over $B$. If $c_Y (D)$ contains any fibres from the first two cases, these fibres, and thus $c_Y (D)$, have nonempty intersection with $\bVi$, so we set $D' := D$. If this is not the case, then $c_Y (D) \subset F$. From here, we consider the divisorial valuation $v' := \ord_D \circ \iota$ and let $D'$ be a divisor associated to the valuation $v' $. Then $c_Y (D') = \iota (c_Y (D)) \subset E$ and $c_Y (D')$ consists of fibres of $E \to B_\infty$, so $c_Y(D') \cap \bVi$ is nonempty. Since $\ord_{D'} = \ord_D \circ \iota$, $\beta_Y(D) = \beta_Y(D')$.

If all closed points $y \in c_Y (D)$ are fixed by the $\bT$-action, then we have that $c_Y (D)$ is either contained in $\bVi$, $V_0$, or $E \cap F$. If $c_Y (D) \subset \bVi$, then take $D' := D$. If $c_Y (D) \subset V_0$, we consider the divisorial valuation $v' := \ord_D \circ \iota$ and let $D'$ be a divisor associated to the valuation $v'$. Then $c_Y (D') = \iota (c_Y (D))$ and $c_Y (D') \cap \bVi = \iota (c_Y (D) \cap V_0) \neq \emptyset$. Again, since $\ord_{D'} = \ord_D \circ \iota$, $\beta_Y(D) = \beta_Y(D')$.

Suppose $c_Y(D) \subset E \cap \iota(E)$, and denote $\ord_D$ as $v$. Let $\mu := \ord_D |_{\C(V)}$ where the inclusion $\C(V) \subset \C(Y)$ is induced by the composition $\pi \circ \phi$. 

Let $\mu$ be a valuation on $V$ and $c_V (\mu) \subset U \subset V$ be a Zariski-open neighborhood of $c_V (\mu)$. Choose a trivialization of $\cL$ over $U$. This trivialization gives a birational map $X \dashrightarrow V \times \A^1$, and composing by $\pi$ gives us a birational map $Y \dashrightarrow V \times \A^1$. This birational map induces an isomorphism on function fields, so we have $\C(Y) \cong \C(V \times \A^1) \cong \C(V)(t)$. 

We define the valuation $v_{\mu,b}$ on $Y$ for $\mu$ a valuation on $V$ and $b \in \Z$ as follows:
\[
v_{\mu,b} (f) = \min_{i \in \Z} \{\mu(f_i) + bi\}
\]
where $f = \sum_i f_i t^i$ is the weight decomposition under the $\G_m$ action via the above isomorphism of function fields. Moreover, every $\bT$-invariant valuation on $Y$ is of this form for some (not necessarily unique) choice of $\mu,$ $U,$ and $b$. Indeed, suppose $v$ is a $\bT$-invariant valuation and let $\mu = v \circ \overline{s_0}$ where $\overline{s_0}: V \to Y$ is the lifting of the zero section $s_0 : V \to X$. Choose $U$ such that $\cL$ has a trivialization over $U$ and $c_V (\mu) \subset U$. Let $b = v(t)$ under the isomorphism $\C(Y) \cong \C(V)(t)$ induced by the trivialization. Then, for $f = \sum f_i t^i \in \C(Y)$, 
\begin{align*}
    v(f) = \min_{i \in \Z} \{v(f_i t^i)\} = \min_{i \in \Z} \{v(f_i) + bi\} =  v_{\mu,b} (f)
\end{align*}

 We then have by \cite[Proposition 3.12]{Li22},
\[
\beta(v_{b}) = \beta(v) + b\Fut(\xi) = \beta(v) = \beta(\mu^*),
\]
with $\mu^* = v_{\mu,0}$. The second equality follows from Lemma~\ref{futaki}. Now, we note that $c_Y(\mu^*) = \phi^{-1}_*(c_V(\mu))$ and thus consists of $1$-dimensional orbits of the $\bT$-action. If $c_Y(\mu^*) \cap \bVi$ is nonempty, we take $D'$ to be a divisor over $Y$ such that $D'$ is the associated divisor to the divisorial valuation $\mu^*$. If $c_Y(\mu^*) \cap \bVi$ is empty, then $c_Y (\mu^*) \subset F$, so we consider the valuation $\mu^{* '} := \mu^* \circ \iota$, and take $D'$ to be a divisor associated to $\mu^{* '}$.  From there we see that $\beta_Y(D') = \beta_Y(\mu^*)=\beta_Y(D)$.
\end{proof}

\begin{lemma} \label{horizontal}
    The only horizontal divisors over $Y$ are $\bVi$ and $V_0$. 
\end{lemma}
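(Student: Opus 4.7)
The plan is to use the parametrization of $\bT$-invariant valuations established in the proof of Lemma~\ref{vertical}: every such valuation on $\C(Y)$ is of the form $v_{\mu,b}$ for some valuation $\mu$ on $V$ and $b\in\Z$, and the restriction of $v_{\mu,b}$ to the $\bT$-invariant subfield $\C(V)\subset\C(Y)$ is exactly $\mu$. If $D$ is a horizontal $\bT$-invariant prime divisor over $Y$, then $v:=\ord_D$ is a $\bT$-invariant divisorial valuation, so $v=v_{\mu,b}$, and horizontality of $D$ translates to triviality of the induced $\bT$-action on the residue field $\kappa(v)$.

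The key identification is that $\kappa(v)^{\bT}=\kappa(\mu)$. The inclusion $\cO_\mu\subseteq\cO_v$ with $\cO_\mu\cap\mathfrak m_v=\mathfrak m_\mu$ gives an injection $\kappa(\mu)\hookrightarrow\kappa(v)^{\bT}$. For the reverse inclusion, given $\bar f\in\kappa(v)^{\bT}$, I would pass to the $\mathfrak m_v$-adic completion $\widehat{\cO_v}$ (which admits a rational $\bT$-module structure with $\Z$-graded weight decomposition induced by $\lambda\cdot t=\lambda t$) and expand a lift $f=\sum_i f_it^i$. The $\bT$-invariance of $\bar f$ forces $\mu(f_i)+bi>0$ for every $i\ne 0$ with $f_i\ne 0$, which combined with $v(f)\ge 0$ yields both $\mu(f_0)\ge 0$ and $v(f-f_0)>0$, so $\bar f=\bar f_0\in\kappa(\mu)$.

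Once $\kappa(v)^{\bT}=\kappa(\mu)$ is in hand, horizontality of $D$ gives $\kappa(v)=\kappa(v)^{\bT}=\kappa(\mu)$, so the two residue fields have the same transcendence degree over $\C$. Since $v$ is divisorial on $Y$ of dimension $n$, this degree equals $n-1$; but Abhyankar's inequality forces $\mathrm{tr.deg}_\C\kappa(\mu)\le n-2$ for any nontrivial valuation $\mu$ on $\C(V)$. Hence $\mu$ must be trivial, in which case $v_{0,b}$ is a positive multiple of $\ord_{V_0}$ when $b>0$ and of $\ord_{\bVi}$ when $b<0$; the divisor associated to $v$ is therefore $V_0$ or $\bVi$.

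The main delicate point I anticipate is justifying the weight expansion $f=\sum_i f_it^i$ inside $\cO_v$, since $\C(Y)$ is not locally finite as a $\bT$-module in its own right; passing to the $\mathfrak m_v$-adic completion, where each element lies in the completion of an algebraic weight decomposition, is what makes the argument go through and allows one to avoid any case analysis on $b$ or on whether $\mu$ is divisorial, quasi-monomial, or of higher rank. After that, the proof reduces to a one-line transcendence-degree inequality.
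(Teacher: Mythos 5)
Your proof is correct and follows essentially the same route as the paper's: both arguments reduce to showing that a horizontal $\bT$-invariant divisorial valuation restricts to the trivial valuation on $\C(Y)^{\bT}\cong\C(V)$, hence equals $\mathrm{wt}_b$ for some $b\neq 0$, whose associated divisor is $V_0$ (for $b>0$) or $\bVi$ (for $b<0$). The paper simply asserts the triviality of the restriction, whereas you justify it via the identification $\kappa(v)^{\bT}=\kappa(\mu)$ together with Abhyankar's inequality; this supplies a step the paper leaves implicit, and your transcendence-degree argument is sound.
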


\begin{proof} Any horizontal divisor must solely consist of points fixed by the torus action since the maximal orbit contained in the divisor is strictly less than 1 and thus must be zero dimensional. Thus, the only horizontal $\bT$-invariant divisors on $Y$ are $\bVi$ and $V_0$, as all fixed points are either contained with these two divisors or in the intersection $E \cap F$.

A nontrivial horizontal valuation $v$ when restricted to $\C(Y)^{\bT}$ will be the trivial valuation, and thus will be equal to $\mathrm{wt}_b$ for some $b\in \Z, b \neq 0$. If $b > 0$, then the associated divisor will be $V_0$, and if $b < 0$ then the associated divisor will be $\bVi$.
\end{proof}

In order to compare $\delta$-invariants using the techniques developed in \cite{AZ22}, we want to consider the refinement of the linear system $T_\bullet$ where $ T_m = H^0(Y, -mK_Y)$ by the divisor $\bVi$, which we will denote as $W_{\bullet,\bullet}$. This is defined (see Definition~\ref{refine}) as 
\[
W_{m,j} := \text{Im} \left(H^0 (Y,-m K_Y - j \bVi) \xrightarrow{\rho} H^0 (\bVi, (-m K_Y - j \bVi)|_{\bVi}) \right)
\]
where the map $\rho$ is induced by the inclusion of $\bVi$ into $Y$. 

\begin{lemma}\label{refinement}
The refinement of $T_\bullet$ by $\bVi$ is 
\[
W_{m,j} =
    \begin{cases}
        H^0 (\bVi, (-m K_Y - j \bVi)|_{\bVi}), & \text{if } j \leq m\\
        (j-m)B + H^0 (\bVi, P(m,j)|_{\bVi}), & \text{if } m < j \leq 2m\\
        0, & \text{if } j > 2m
    \end{cases}
\]
The movable part of $W_{m,j}$, denoted as $M_{m,j}$, is
\[
M_{m,j} =
    \begin{cases}
        H^0 (V, -m K_V - ( m - j) \cL ), & \text{if } j \leq m\\
        H^0 (V, -mK_V + (m - j ) \cL), & \text{if } m < j \leq 2m\\
        0, & \text{if } j > 2m
    \end{cases}
\]
and the fixed part of $W_{m,j}$, denote as $F_{m,j}$, is
\[
F_{m,j} = 
    \begin{cases}
        0, & \text{if } j \leq m \\
        (j-m)B, & \text{if } n < j \leq 2m \\
        0, & else
    \end{cases}
\]
\end{lemma}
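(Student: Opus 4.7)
The strategy is to compute $W_{m,j}$ in each of the three ranges of $j$ relative to $m$ by using the Zariski decomposition of $-K_Y - t\bVi$ established in Lemma~\ref{futaki}, applying Kawamata-Viehweg vanishing to obtain surjectivity of the relevant restriction maps, and then identifying the resulting spaces of sections on $\bVi \cong V$ as line bundles on $V$ via adjunction together with the relation $B \sim_\Q 2\cL$. The easiest case is $j > 2m$: the Zariski decomposition shows the pseudo-effective threshold of $\bVi$ with respect to $-K_Y$ equals $2$, so $-mK_Y - j\bVi$ is not pseudo-effective, whence $H^0(Y, -mK_Y - j\bVi) = 0$ and $W_{m,j} = 0$.

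For $j \leq m$, I plan to write $-mK_Y - (j+1)\bVi = K_Y + D$ with $D = (m+1)(-K_Y - \tfrac{j+1}{m+1}\bVi)$, which is nef (since $\tfrac{j+1}{m+1} \leq 1$ puts us inside the nef range from Lemma~\ref{futaki}) and big (since $\vol(-K_Y - s\bVi) > 0$ for $s < 2$). Twisting the standard restriction sequence $0 \to \cO_Y(-\bVi) \to \cO_Y \to \cO_{\bVi} \to 0$ by $\cO_Y(-mK_Y - j\bVi)$ and applying Kawamata-Viehweg vanishing then shows the restriction map is surjective, so $W_{m,j} = H^0(\bVi, (-mK_Y - j\bVi)|_{\bVi})$. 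To identify this line bundle I will use $\pi^*V_\infty = \bVi + E$, the normal bundle formula $V_\infty|_{V_\infty} = \cL$ coming from the $\P^1$-bundle structure, and the fact that $E|_{\bVi}$ corresponds to $B \subset V$ under the isomorphism $\bVi \cong V$, giving $\bVi|_{\bVi} = \cL - B$. Adjunction then produces $-K_Y|_{\bVi} = -K_V + \cL - B$, and substituting $B \sim_\Q 2\cL$ yields $(-mK_Y - j\bVi)|_{\bVi} \sim_\Q -mK_V - (m-j)\cL$, identifying $M_{m,j}$ as claimed with $F_{m,j} = 0$.

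For $m < j \leq 2m$, I plan to use the Zariski decomposition $-mK_Y - j\bVi = mP_\infty(j/m) + (j-m)E$ with integral negative part $(j-m)E$; every section must vanish to this order along $E$, so $H^0(Y, -mK_Y - j\bVi) \cong H^0(Y, mP_\infty(j/m))$ via multiplication by the canonical section of $(j-m)E$, which restricts on $\bVi$ to the canonical section of $(j-m)B$. A second Kawamata-Viehweg argument, applied to $mP_\infty(j/m) - \bVi - K_Y$ (which simplifies to $(2m-j+1)\pi^*H + \tfrac{(m+1)(r-1)}{r}\pi^*\phi^*(-K_V)$ and is nef and big for $j \leq 2m$), yields surjectivity of $H^0(Y, mP_\infty(j/m)) \to H^0(\bVi, mP_\infty(j/m)|_{\bVi})$. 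Restricting $mP_\infty(j/m) = (2m-j)\pi^*H + \tfrac{m(r-1)}{r}\pi^*\phi^*(-K_V)$ to $\bVi$, using $V_0 \cap \bVi = \emptyset$ and $\pi^*H|_{\bVi} = \cL$, then produces the line bundle class $-mK_V + (m-j)\cL$, matching the claimed $M_{m,j}$ and leaving $(j-m)B$ as the fixed part. The main obstacle throughout is the careful bookkeeping of intersection-theoretic identities on $Y$, particularly $\bVi|_{\bVi}$, $E|_{\bVi}$, and $\pi^*H|_{\bVi}$; a secondary subtlety is passing from the $\Q$-linear equivalence $B \sim_\Q 2\cL$ to an equality of line bundles, which can be handled by taking $m$ sufficiently divisible for the $\delta$-invariant computations to follow.
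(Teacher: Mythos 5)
Your proposal is correct and follows essentially the same route as the paper: compute the Zariski decomposition of $-mK_Y - j\bVi$, use Kawamata--Viehweg vanishing on the restriction sequence to get surjectivity onto $H^0(\bVi,\cdot)$, and identify the restricted classes on $\bVi\cong V$ via adjunction and $\bVi|_{\bVi}=\cL-B$. If anything, your treatment of the range $m<j\leq 2m$ is slightly more careful than the paper's, since you apply vanishing to the correctly twisted divisor $mP_\infty(j/m)-\bVi-K_Y$ rather than to $P(m,j)$ itself.
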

\begin{proof}
For $j > 2m$, $-mK_Y - j\bVi$ is no longer psuedo-effective, and thus the image of $\rho$ is $0$.

Denoting the positive (resp. negative) part of the Zariski decomposition of the divisor $-mK_Y - j\bVi$ with $P(m,j)$ (resp. $N(m,j)$), we have
\[
P(m,j) = 
    \begin{cases}
        -mK_Y - j\bVi,    & \text{if } j \leq m\\
        -mK_Y - j\pi^*(V_{\infty}),   & \text{if } m < j \leq 2m 
    \end{cases}
\]
and
\[
N(m,j) = 
    \begin{cases}
        0,                           & \text{if } j \leq m\\
        (j-m) E,                     & \text{if } m < j \leq 2m
    \end{cases}
\]
Again, we can see that the above is the Zariski decomposition as follows: for $j \leq m$, the divisor is nef, and thus has no negative part. For $ m < j \leq 2m$, $P(m,j)$ is the pullback of a nef divisor along the contraction $Y \to X$ and $N(m,j)$ is supported on the contracted locus. 

To compute $W_{\bullet,\bullet}$ we use the following long exact sequence from the restriction $\rho$:
\begin{align*}
    \dots \rightarrow H^0 (Y, -mK_Y - (j+1) \bVi) \rightarrow H^0 (Y,-m K_Y - j \bVi) \xrightarrow{\rho} \\ 
    H^0 (\bVi, (-m K_Y - j \bVi)|_{\bVi}) \rightarrow H^1 (Y, -mK_Y - (j+1) \bVi) \rightarrow \dots
\end{align*}

Thus, we see that the map on global sections induced by restriction is surjective when $H^1 (Y, -mK_Y - (j+1) \bVi)$ vanishes, which occurs when $ j \leq m$ by the Kawamata-Viehweg Vanishing Theorem. More specifically, when $j \leq m$, $t := \frac{j+1}{m+1} \leq 1$ and $-K_Y - t \bVi$ is both big and nef, and thus so is $(m+1)(-K_Y - t \bVi) = -(m+1) K_Y - (j+1) \bVi$, so by Kawamata-Viehweg Vanishing Theorem, $H^i (Y, -mK_Y - (j+1) \bVi) = 0$ for $i > 0$.

Now we consider when $m < j \leq 2m$. In this case, the restriction of the negative part of the Zariski decomposition gives us the fixed part of the refinement, which is $(j-m)E_{\bVi} = (j-m) B$. For the movable part, we see that $P(m,j)$ is again big and nef, so $H^1 (Y, P(m,j)) = 0$. Thus, by the analogous long exact sequence from the restriction of $P(m,j)$ to $\bVi$, we see that the restriction map is surjective, implying that the free part of the refinement is the space of global sections of the restriction of $P(m,j)$.
\end{proof}

Note that, since $r\cL \sim_{\Q} -K_Y$, we have
\begin{align*}
-mK_Y|_{\bVi} - j \bVi |_{\bVi} &= -m (K_Y + \bVi)|_{\bVi} - (j-m) \bVi|_{\bVi} \\
&= -m K_{\bVi} - (j - m)(V_{\infty} - E)|_{\bVi} \\
&= -m K_{\bVi} - (j-m)(\cL - B) = mrL -(j-m)(-\cL) \\
&= (mr - m + j) \cL
\end{align*}

When $j \leq m$, this is then the free part of $W_{m,j}$. When $m < j \leq 2m$, we see that the fixed part $F_{m,j}$ of $W_{m,j}$ is $(j-m) B = 2(j-m)\cL$, and thus the free part is $(mr-m+j) \cL - 2(j-m) \cL = (mr+m-j) \cL$.    

Let $N_{m,j}$ denote the dimension of $H^0(V, W_{m,j})$, and $N_m = \sum_j N_{m,j}$. Let
\[
a(n,r) :=  \frac{r^{n+1} - (r-1)^{n+1} - (n+1)(r-1)^n }{2(n+1)(r^n - (r-1)^n)} 
\]
and let $a_m B$ be the fixed part of an $m$-basis type $\Q$-divisor of $W_{\bullet,\bullet}$. Then from the above description of $W_{\bullet,\bullet}$, we have that
\[
 a_m = \frac{1}{m N_m} \sum_{j=0}^{2m} N_{m,j} a_{m,j}
\]

\begin{lemma}\label{anr}
    \[
        \lim_{m\to \infty}a_m(n,r) =  a(n,r)
    \]
\end{lemma}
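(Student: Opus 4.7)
\medskip

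\noindent\textbf{Proof plan.} The plan is to interpret $a_m$ as a Riemann sum and pass to the limit. From Lemma~\ref{refinement}, the fixed part of the refinement $W_{m,j}$ vanishes for $j \leq m$ and equals $(j-m)B$ for $m < j \leq 2m$, so $a_{m,j} = 0$ in the first range and $a_{m,j} = j - m$ in the second. Hence
\[
a_m \;=\; \frac{1}{m N_m} \sum_{j=m+1}^{2m} N_{m,j}\,(j-m),
\]
with $N_{m,j} = h^0(V, M_{m,j})$. Since $V$ is a smooth $(n-1)$-dimensional Fano variety and $\cL$ is ample, asymptotic Riemann--Roch gives $h^0(V, k\cL) = \frac{k^{n-1}}{(n-1)!}\,\vol(\cL) + O(k^{n-2})$ as $k \to \infty$, where $\vol(\cL) = \cL^{n-1} = \vol(V)/r^{n-1}$.

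\medskip

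\noindent First I would substitute $s = j/m$ to turn the sums into Riemann sums. Using the description of $M_{m,j}$ from Lemma~\ref{refinement}, one has
\[
\frac{N_{m,j}}{m^{n-1}} \;\longrightarrow\; \frac{\vol(\cL)}{(n-1)!} \cdot
\begin{cases}
(r-1+s)^{n-1}, & 0 \leq s \leq 1,\\
(r+1-s)^{n-1}, & 1 \leq s \leq 2,
\end{cases}
\]
uniformly in $s$ on each subinterval. Therefore
\[
\frac{N_m}{m^n} \;\longrightarrow\; \frac{\vol(\cL)}{(n-1)!}\left(\int_0^1 (r-1+s)^{n-1}\,ds + \int_1^2 (r+1-s)^{n-1}\,ds\right) = \frac{2\bigl(r^n - (r-1)^n\bigr)\vol(\cL)}{n!},
\]
and likewise
\[
\frac{1}{m^{n+1}} \sum_{j=m+1}^{2m} N_{m,j}(j-m) \;\longrightarrow\; \frac{\vol(\cL)}{(n-1)!} \int_1^2 (r+1-s)^{n-1}(s-1)\,ds.
\]

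\medskip

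\noindent The remaining calculation is to evaluate the latter integral. Substituting $u = s - 1$ and integrating by parts yields
\[
\int_0^1 (r-u)^{n-1}\,u\,du \;=\; \frac{r^{n+1} - (r-1)^{n+1} - (n+1)(r-1)^n}{n(n+1)}.
\]
Dividing the two limits then gives
\[
\lim_{m \to \infty} a_m \;=\; \frac{r^{n+1} - (r-1)^{n+1} - (n+1)(r-1)^n}{2(n+1)\bigl(r^n - (r-1)^n\bigr)} \;=\; a(n,r),
\]
which is the claim. There is no real obstacle here: the only point to be a little careful about is verifying the uniform convergence of $N_{m,j}/m^{n-1}$ (so that the finite sums may be replaced by the integrals in the limit), but this follows from the uniform $O(k^{n-2})$ error bound in Riemann--Roch together with the fact that the degree $mr - m + j$ (respectively $mr + m - j$) of the relevant line bundle grows linearly in $m$ uniformly over $j \in [0, 2m]$ (recall $r > 1$).
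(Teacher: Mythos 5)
Your proof is correct and follows essentially the same route as the paper: both express $a_m$ as a ratio whose numerator and denominator are Riemann sums (via asymptotic Riemann--Roch applied to $N_{m,j}=h^0(V,(mr-|m-j|)\cL)$), pass to the limiting integrals $\int_1^2(r+1-s)^{n-1}(s-1)\,ds$ and $2\int_1^2(r+1-s)^{n-1}\,ds$, and evaluate. Your added remark on the uniformity of the $O(k^{n-2})$ error term is a point the paper leaves implicit, but the argument is the same.
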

\begin{proof}
    We note that, asymptotically,
    \begin{align*}
        N_m &=  \sum_{j=0}^m \frac{(mr-m+j)^{n-1} \text{vol}(\cL)}{(n-1)!} \\
            &+ \sum_{j=m+1}^{2m} \frac{(mr+m-j)^{n-1} \text{vol}(\cL)}{(n-1)!} + O(m^{n-2})  \\
            &= \frac{m^{n} \text{vol}(\cL)}{(n-1)!} \left(\sum_{j=0}^m \frac{1}{m}(r-1+\frac{j}{m})^{n-1} \right. \\
            &+ \left. \sum_{j=m+1}^{2m} \frac{1}{m}(r+1-\frac{j}{m})^{n-1} +\frac{(n-1)!}{m^{n} \text{vol}(\cL)}O(m^{n-2})  \right)
    \end{align*}

    Thus, we have 
    \begin{align*}
        \sum_{j=0}^{2m} N_{m,j} a_{m,j} &= \frac{\text{vol}(\cL)}{(n-1)!} \left(\sum_{m+1}^{2m} (mr+m-j)^{n-1} (j-m) +O(m^{n-2}) \right) \\
        &= \frac{m^{n+1} \text{vol}(\cL)}{(n-1)!} \left( \sum_{m+1}^{2m} \frac{1}{m}(r+1-\frac{j}{m})^{n-1} (\frac{j}{m}-1) + \frac{1}{m^n+1}O(m^{n-2}) \right)\\
    \end{align*}
    Thus we compute the limit as follows:
    \begin{align*}
        \lim_{m\to \infty}a_m   &= \lim_{m\to \infty} \frac{1}{m N_m} \sum_{j=0}^{2m} N_{m,j} a_{m,j}  \\
                                &= \lim_{m\to \infty} \frac{\sum_{j=0}^m \frac{1}{m}(r-1+\frac{j}{m})^{n-1}  + \sum_{j=m+1}^{2m} \frac{1}{m}(r+1-\frac{j}{m})^{n-1}}{\sum_{m+1}^{2m} \frac{1}{m}(r+1-\frac{j}{m})^{n-1} (\frac{j}{m}-1)} \\
                                &= \frac{\int _1^2 (r+1-t)^{n-1}(t-1) dt}{2\int_1^2 (r+1-t)^{n-1} dt}  \\
                                &= \frac{1}{2} \left[\frac{-(r-1)^n - \frac{(r-1)^{n+1} -r^{n+1}}{n+1}}{r^n - (r-1)^n}\right]  \\
                                &= a
    \end{align*}
\end{proof}

In the terminology of \cite{AZ22}, $a(n,r)$ is the coefficient of $B$ in the asymptotic fixed part $F(W_{\bullet,\bullet})$ of $W_{\bullet,\bullet}$.
\begin{lemma}\label{localglobaldelta}
    Let $D$ be a divisor over $Y$ and $Z \subset c_Y(D) \subset Y$ be a subvariety of $Y$ contained in the center of $D$ on $Y$. If $\delta_Z(Y) \geq 1$, then $\beta_Y(D) \geq 0$.
\end{lemma}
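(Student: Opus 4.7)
The statement is essentially a direct consequence of the definition of the local $\delta$-invariant, so the proposal is short.

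The plan is to observe that $D$ itself appears among the divisors in the infimum defining $\delta_Z(Y)$. Explicitly, since $Z \subset c_Y(D)$ by hypothesis, $D$ is one of the divisors over $Y$ whose center on $Y$ contains $Z$. Therefore, taking $L = -K_Y$ in the definition,
\begin{equation*}
    \delta_Z(Y) \;=\; \inf_{\substack{E \text{ over } Y \\ Z \subset c_Y(E)}} \frac{A_Y(E)}{S_Y(E)} \;\leq\; \frac{A_Y(D)}{S_Y(D)}.
\end{equation*}

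Combining this with the assumption $\delta_Z(Y) \geq 1$ gives $A_Y(D) / S_Y(D) \geq 1$. Since $-K_Y$ is big and ample (so that $S_Y(D) > 0$ for any nontrivial divisor $D$ over $Y$), we may clear denominators to conclude
\begin{equation*}
    \beta_Y(D) \;=\; A_Y(D) - S_Y(D) \;\geq\; 0,
\end{equation*}
which is the desired inequality. There is no real obstacle here; the content of the lemma is simply the unpacking of the definition of $\delta_Z$, and its purpose is to set up a clean local-to-global reduction so that later, when bounding $\beta_Y(D)$ for vertical divisors via the Abban--Zhuang refinement \cite[Theorem 3.3]{AZ22}, one only needs to verify a local lower bound $\delta_Z(Y) \geq 1$ at relevant centers $Z$ rather than directly estimating $\beta$-invariants for every divisor $D$ individually.
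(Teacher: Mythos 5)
Your proposal is correct and follows exactly the same route as the paper's own proof: $D$ is one of the divisors appearing in the infimum defining $\delta_Z(Y)$ because $Z \subset c_Y(D)$, so $A_Y(D)/S_Y(D) \geq \delta_Z(Y) \geq 1$, and clearing the (positive) denominator gives $\beta_Y(D) \geq 0$. Your extra remark that $S_Y(D) > 0$ since $-K_Y$ is ample is a sensible explicit justification of the sign when clearing denominators, but otherwise the arguments coincide.
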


\begin{proof}
    Suppose $\delta_Z (Y) \geq 1$. Then, by definition of the local $\delta$-invariant, $\frac{A_Y (D')}{S_Y(D')} \geq 1$ for all divisors $D'$ over $Y$ whose center on $Y$ contains $Z$. This includes $D$. Thus, by rearranging terms, $\beta_Y (D) = A_Y (D) - S_Y (D) \geq 0$.  
\end{proof}

\begin{lemma}\label{localdelta}
   Let $Z$ be a subvariety of $Y$ such that $Z \subset \bVi$. Suppose $(V,aB)$ is K-semistable. Then $\delta_Z(Y) \geq 1$.
\end{lemma}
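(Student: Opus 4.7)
The plan is to apply the Abban--Zhuang inequality \cite[Theorem 3.3]{AZ22} to the divisor $\bVi$ over $Y$, viewed as a primitive divisor extracting the refinement $W_{\bullet,\bullet}$ of the anticanonical system. Since $Z \subseteq \bVi = c_Y(\bVi)$ and both $Y$ and $\bVi$ are smooth (so adjunction gives vanishing different $D_{\bVi} = 0$), the theorem produces
\[
\delta_Z(Y) \geq \min\left\{\frac{A_Y(\bVi)}{S_Y(\bVi)},\ \inf_{Z'} \delta_{Z'}(\bVi, 0, W_{\bullet,\bullet})\right\},
\]
where $Z'$ ranges over subvarieties of $\bVi$ mapping to $Z$. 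The first bound is $1$ immediately by Lemma~\ref{futaki}, which gives $A_Y(\bVi) = S_Y(\bVi) = 1$.

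For the second bound, I would fix $Z' \subseteq \bVi \cong V$ and decompose any $m$-basis type $\Q$-divisor $\Delta_m$ of $W_{\bullet,\bullet}$ using Lemma~\ref{refinement} as $\Delta_m = a_m B + \Delta_m^{M}$, where the fixed part contributes $a_m B$ with $a_m \to a$ (Lemma~\ref{anr}) and $\Delta_m^M$ is assembled from basis-sum divisors of the movable pieces $M_{m,j} = H^0(V, -mK_V - |m-j|\cL)$. For any divisorial valuation $v$ on $V$ with center containing $Z'$, this decomposition yields in the limit
\[
S_{\bVi}(W_{\bullet,\bullet}; v) \;=\; a\cdot v(B) \;+\; S(M_{\bullet,\bullet}; v).
\]
The key step is then to identify $S(M_{\bullet,\bullet}; v) = S_{V, aB}(v)$ by directly comparing the integral defining $S_{V,aB}(v)$ against the limit of weighted sums $\tfrac{1}{mN_m}\sum_j\sum_{s \in \text{basis of } M_{m,j}} v(s)$. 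With this identity in hand, the K-semistability hypothesis $A_{V,aB}(v) \geq S_{V,aB}(v)$ together with the log-discrepancy relation $A_V(v) = A_{V,aB}(v) + a\cdot v(B)$ gives
\[
A_V(v) - S_{\bVi}(W_{\bullet,\bullet}; v) \;=\; A_{V,aB}(v) - S_{V,aB}(v) \;\geq\; 0,
\]
so $\delta_{Z'}(\bVi, 0, W_{\bullet,\bullet}) \geq 1$ for every admissible $Z'$, completing the argument.

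The main obstacle is verifying the identity $S(M_{\bullet,\bullet}; v) = S_{V, aB}(v)$. This is an Okounkov-body-style comparison: one must show that averaging $v$ over basis divisors of $M_{m,j}$, weighted by $N_{m,j}$ and summed over the second grading, reproduces in the limit exactly the volume integral $\tfrac{1}{\vol(-K_V-aB)}\int_0^\infty \vol(-K_V-aB - tv)\,dt$. The explicit form of $M_{m,j}$ together with the symmetry $j \mapsto 2m-j$ that already drives the computation of $a$ in Lemma~\ref{anr} strongly suggests this identity, but translating the convergence of Riemann-sum approximations on $[0,2]$ (with weights from $N_{m,j}$) into the volume integral on $V$ is the technical heart of the lemma, and is the step that requires the most care.
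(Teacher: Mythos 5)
Your proposal is correct in outline and matches the paper on the first reduction (Abban--Zhuang applied to $\bVi$, with the first term of the minimum equal to $1$ by Lemma~\ref{futaki}), but it handles the second term $\delta_{Z'}(\bVi,W_{\bullet,\bullet})$ by a genuinely different route. The paper never computes $S(W_{\bullet,\bullet};v)$ for individual valuations: it works with the finite-level invariants $\delta_m$, writing an $m$-basis type divisor $D_m$ of $W_{\bullet,\bullet}$ as a convex combination $\sum_j w_{m,j}\bigl(\tfrac{1}{(mr-|m-j|)N_{m,j}}\Delta_{mr-|m-j|}+aB\bigr)$ of rescaled basis-type divisors of $\cL$ plus the boundary, invoking $\delta_M(V,aB)>1-\epsilon$ to make each summand log canonical and then convexity of log canonicity to conclude $(V,(1-\epsilon)D_m)$ is lc. Your route computes $S(W_{\bullet,\bullet};v)$ valuation by valuation; it yields the cleaner statement that $\beta$ of the refinement agrees with $\beta_{V,aB}$, at the price of the limit identity you flag. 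That identity does hold, and is easier than you fear: one has
\[
S_m(W_{\bullet,\bullet};v)=\frac{1}{mN_m}\sum_j\Bigl[(mr-|m-j|)\,N_{m,j}\,S_{mr-|m-j|}(\cL;v)+a_{m,j}N_{m,j}\,v(B)\Bigr],
\]
the second sum tends to $a\,v(B)$ by Lemma~\ref{anr}, and the symmetry $N_{m,j}=N_{m,2m-j}$ gives the exact finite-level identity $\frac{1}{mN_m}\sum_j(mr-|m-j|)N_{m,j}=r-2a_m$ (this is precisely the paper's assertion that $\sum_j w_{m,j}=1$). Since every $S_{mr-|m-j|}(\cL;v)\to S(\cL;v)$ uniformly in $j$ (the multiples $mr-|m-j|\geq m(r-1)$ all go to infinity), the first sum converges to $(r-2a)S(\cL;v)=S_{V,aB}(v)$ by homogeneity of $S$ and $-K_V-aB\sim_\Q(r-2a)\cL$. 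With that step written out — essentially the same Riemann-sum bookkeeping as Lemma~\ref{anr} and as the paper's own Proposition~\ref{conventionbeta} — your argument is complete.
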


\begin{proof} By \cite[Theorem 3.3]{AZ22}, we have that 
\[
\delta_Z(Y) \geq \text{min} \left\{ \frac{A_Y(\bVi)}{S_Y(\bVi)}, \delta_{Z} (\bVi, W_{\bullet,\bullet}) \right\}
\]
Where $W_{\bullet,\bullet}$ is the refinement of the graded linear series associated to $-K_Y$ by $\bVi$. The first value in the minimum is $1$ by Lemma~\ref{futaki}. Thus, it remains to show that $\delta_{Z} (\bVi, W_{\bullet,\bullet}) \geq 1$.

By the assumption that the pair $(V,aB)$ is K-semistable, we have that $\delta(V,aB) \geq 1$, which in particular means that, for a choice of $\epsilon > 0$, $ \delta_M (V,aB) > 1 -\epsilon$ for all $M$ sufficiently large. So, for any $M$-basis type $\Q$ divisor $\Delta_M$ of $-K_V -aB$, we have that $(V, (1-\epsilon)\Delta_M +aB)$ is lc.

 Since $-K_V - aB = (r-2a) \cL$, for any such $\Delta_M$, $(r-2a)\Delta_M$ is an $M(r-2a)$-basis type $\Q$-divisor of $\cL$, and thus we have $\delta_{M(r-2a)} (V,aB;\cL) >(r-2a)(1-\epsilon)$. 

Now, let $D_m$ be an $m$-basis type divisor of $W_{\bullet,\bullet}$, then
\[
D_m = \frac{1}{m N_m} \sum_{j=0}^{2m} D_{m,j}
\]
where each $D_{m,j}$ is a basis sum divisor of $W_{m,j}$. By our computation of $W_{m,j}$, we see that $D_{m,j} = \Delta_{mr-|m-j|} +a_{m,j}N_{m,j}B$, where $\Delta_{mr-|m-j|}$ is a basis sum divisor of $H^0((mr-|m-j|) \cL)$ and $a_{m,j} = j-m$ for $2m \geq j>m$ and $0$ otherwise.
In light of the previous discussion, we can choose $m$ such that $mr-|m-j|$ is sufficiently large and hence the pair
\[
    \left(V, (r-2a)(1-\epsilon) \frac{1}{(mr-|m-j|) N_{m,j}}\Delta_{mr-|m-j|} + aB\right)
\]
is log canonical for all $0\leq j\leq 2m$. Letting $w_{m,j} = \frac{(mr - |m-j|) N_{m,j}}{m(r-2a_m)N_m}$. Then we see that $\sum_{j=0}^{2m} w_{m,j} = 1$, so by the convexity of log canonicity, we have that the pair
\begin{align*}\label{convexity}
    &\left(V, (r-2a)(1-\epsilon) \sum_{j=0}^{2m} w_{m,j} \left(\frac{1}{(mr-|m-j|) N_{m,j}}\Delta_{mr-|m-j|} + aB\right) \right) \\
    &= \left( V, (1-\epsilon) \frac{1}{m N_m} \left(\sum_{j=0}^{2m} \Delta_{mr-|m-j|} +a_{m,j}N_{m,j}B \right) \right) \\
    &= \left( V, (1-\epsilon) D_m \right)
\end{align*}
is log canonical. Thus, we see that $\delta_{Z} (\bVi, W_{\bullet,\bullet}) \geq 1$ as desired.
\end{proof}

\begin{prop}\label{ssreverse}
    If the pair $(V,aB)$ is K-semistable, then $Y$ is K-semistable.
\end{prop}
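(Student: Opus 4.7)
The plan is to assemble the preceding lemmas into the proof. By Theorem~\ref{equiv}, K-semistability of $Y$ is equivalent to $\bT$-equivariant K-semistability, so it suffices to show $\beta_Y(D) \geq 0$ for every $\bT$-invariant divisor $D$ over $Y$. I would split into cases based on whether $D$ is horizontal or vertical.

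For the horizontal case, Lemma~\ref{horizontal} tells us the only $\bT$-invariant horizontal divisors are $\bVi$ and $V_0$, and Lemma~\ref{futaki} gives $\beta_Y(\bVi) = \beta_Y(V_0) = 0$. So the horizontal case is immediate and requires no further input from $(V, aB)$.

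For the vertical case, I would first invoke Lemma~\ref{vertical} to replace $D$ with a vertical divisor $D'$ having $\beta_Y(D') = \beta_Y(D)$ and such that $c_Y(D') \cap \bVi \neq \emptyset$. Choose an irreducible component $Z$ of $c_Y(D') \cap \bVi$; then $Z \subset \bVi$ and $Z \subset c_Y(D')$. Applying Lemma~\ref{localdelta} (which is precisely where the hypothesis that $(V,aB)$ is K-semistable enters) gives $\delta_Z(Y) \geq 1$. Finally, Lemma~\ref{localglobaldelta} converts this local $\delta$-bound into the inequality $\beta_Y(D') \geq 0$, hence $\beta_Y(D) \geq 0$.

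Since the main technical work has already been packaged into Lemmas~\ref{futaki}, \ref{vertical}, \ref{horizontal}, \ref{localdelta}, and \ref{localglobaldelta}, this proof is a short assembly argument; there is no real obstacle beyond carefully verifying that the reduction to a divisor with center meeting $\bVi$ preserves the $\beta$-invariant (which is guaranteed by Lemma~\ref{vertical}) and that the chosen subvariety $Z$ lies both in $\bVi$ (needed for Lemma~\ref{localdelta}) and in $c_Y(D')$ (needed for Lemma~\ref{localglobaldelta}).
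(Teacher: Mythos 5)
Your proposal is correct and follows essentially the same route as the paper's own proof: reduce to $\bT$-equivariant K-semistability via Theorem~\ref{equiv}, dispose of horizontal divisors with Lemmas~\ref{horizontal} and~\ref{futaki}, and handle vertical divisors by combining Lemmas~\ref{vertical}, \ref{localdelta}, and~\ref{localglobaldelta}. Your explicit choice of $Z$ as a component of $c_Y(D')\cap\bVi$ (so that it lies in both $\bVi$ and $c_Y(D')$) is a slightly more careful phrasing of the same step.
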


\begin{proof}
 We will show that the K-semistability of the pair $(V,aB)$ implies the $\bT$-equivariant K-semistability of $Y$, which then implies the K-semistability of $Y$ by Theorem~\ref{equiv}. Suppose $D$ is a $\bT$-invariant divisor over $Y$. We proceed to show $\beta_Y (D) \geq 0$.
 If $D$ is a horizontal divisor, then $\beta_Y (D) = 0 $ by Lemma~\ref{horizontal} and Lemma~\ref{futaki}. Thus, we suppose $D$ is vertical. Then, by Lemma~\ref{vertical}, we may assume $c_Y(D) \cap \bVi$ is nonempty and thus contains some subvariety $Z$. By Theorem~\ref{localdelta}, $\delta_Y(Z) \geq 1$, and thus $\beta_Y(D) \geq 0$ by Lemma~\ref{localglobaldelta}.
\end{proof}

\subsubsection{Forward Implication} 

First we proceed with a $\G_m$-equivariant strengthening of \cite[Lemma 3.1]{AZ22} and \cite[Proposition 3.2]{AZ22}.

\begin{lemma}\label{basis}
Let $H$ be a finite dimensional vector space over $\C$ with an action of $\G_m$ and two $\G_m$-equivariant filtrations $\mathcal{F}, \mathcal{G}$. Then there exists a $\G_m$-invariant basis of $H$ compatible with both $\mathcal{F}$ and $\mathcal{G}$.
\end{lemma}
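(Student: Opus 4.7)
The strategy is to reduce to the non-equivariant version (the statement corresponding to \cite[Lemma 3.1]{AZ22}) by decomposing $H$ into $\G_m$-weight spaces and observing that equivariant filtrations respect this decomposition.

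Concretely, since $\G_m$ is linearly reductive (in fact diagonalizable) and $H$ is finite dimensional, we have a canonical weight-space decomposition
\[
H = \bigoplus_{w\in\Z} H_w,
\]
where $H_w = \{h\in H : t\cdot h = t^w h \text{ for all } t\in\G_m\}$. The first step is to show that because $\mathcal{F}$ and $\mathcal{G}$ are $\G_m$-equivariant, each subspace $\mathcal{F}^\lambda H$ (resp.\ $\mathcal{G}^\mu H$) is itself a $\G_m$-subrepresentation of $H$ and therefore splits compatibly as $\mathcal{F}^\lambda H = \bigoplus_w (\mathcal{F}^\lambda H \cap H_w)$. This amounts to the standard fact that any subrepresentation of a completely reducible representation inherits the weight decomposition. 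Consequently, $\mathcal{F}$ and $\mathcal{G}$ each restrict to honest (non-equivariant) $\R$-filtrations $\mathcal{F}_w$ and $\mathcal{G}_w$ on each weight space $H_w$.

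Next, the plan is to apply the non-equivariant version of the statement (a basis compatible with two filtrations on a finite-dimensional vector space always exists) to each weight space $H_w$ separately, producing a basis $\{e_{w,i}\}_i$ of $H_w$ that is simultaneously compatible with $\mathcal{F}_w$ and $\mathcal{G}_w$. Taking the union $\{e_{w,i}\}_{w,i}$ over all weights yields a basis of $H$. Since each $e_{w,i}$ is a $\G_m$-weight vector, this basis is $\G_m$-invariant in the sense that every element is a $\G_m$-semi-invariant.

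It remains to verify compatibility with the original $\mathcal{F}$ and $\mathcal{G}$ on all of $H$. For any $\lambda\in\R$, the equivariance gives $\mathcal{F}^\lambda H = \bigoplus_w \mathcal{F}_w^\lambda H_w$, and by construction of the $e_{w,i}$ each piece $\mathcal{F}_w^\lambda H_w$ is spanned by a subset of $\{e_{w,i}\}_i$; hence $\mathcal{F}^\lambda H$ is spanned by a subset of the combined basis. The same argument applies to $\mathcal{G}$. The only real content is the reduction to the non-equivariant case, which is painless since $\G_m$ is diagonalizable; the non-equivariant statement is the genuine workhorse, and it is cited rather than reproved.
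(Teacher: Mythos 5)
Your proof is correct, but it takes a genuinely different route from the one in the paper. The paper follows the structure of the non-equivariant argument in \cite{AZ22} and makes it equivariant step by step: it forms the double associated graded pieces $\mathrm{Gr}_j^{\mathcal{G}_i}\mathrm{Gr}_i^{\mathcal{F}}H$, checks that the $\G_m$-action descends to each of them, chooses eigenbases there, and lifts them (equivariantly, using complete reducibility at each lifting stage) back to a basis of $H$; the symmetry $\mathrm{Gr}_j^{\mathcal{G}_i}\mathrm{Gr}_i^{\mathcal{F}}H = \mathrm{Gr}_i^{\mathcal{F}_j}\mathrm{Gr}_j^{\mathcal{G}}H$ then gives compatibility with both filtrations. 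You instead front-load all of the representation theory: you split $H = \bigoplus_w H_w$ into weight spaces once, observe that equivariance forces every $\mathcal{F}^\lambda H$ and $\mathcal{G}^\mu H$ to decompose along this splitting, and then invoke the non-equivariant two-filtration lemma of \cite{AZ22} as a black box on each $H_w$. Your reduction is clean and each step checks out --- a $\G_m$-stable subspace of a finite-dimensional representation does decompose into its weight components, so the restricted filtrations $\mathcal{F}_w$, $\mathcal{G}_w$ are honest filtrations and the union of the per-weight compatible bases is compatible with $\mathcal{F}$ and $\mathcal{G}$ on all of $H$. What your approach buys is that the equivariance is handled exactly once and the combinatorial content is outsourced to the cited lemma; what the paper's approach buys is self-containedness (it essentially reproves the non-equivariant statement along the way) and a presentation that parallels \cite{AZ22} directly. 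Either proof would serve.
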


\begin{proof}
    The proof mostly follows that of the analogous statement in \cite{AZ22}. Consider the induced filtrations $\mathcal{G}_i$ on $Gr_i^{\mathcal{F}} H$ for each $i$. The equivariance of $\mathcal{F}$ implies that the $\G_m$ action on $H$ restricts to an action on each $Gr_i^{\mathcal{F}} H$; the equivariance of $\mathcal{G}$ implies the equivariance of each $\mathcal{G}_i$ and thus the aforementioned action of $\G_m$ restricts to an action on $Gr_j^{\mathcal{G}_i} Gr_i^{\mathcal{F}} H$ for all $i,j$. 
    For each $i$ and $j$, take a basis $B_{i,j}$ of $\G_m$-eigenvectors of $Gr_j^{\mathcal{G}_i} Gr_i^{\mathcal{F}} H$; for each $i$ lift these bases to a basis $B_i$ of $Gr_i^{\mathcal{F}} H$. Each of these bases will consist of $\G_m$-eigenvectors for the action on $Gr_i^{\mathcal{F}} H$; these then lift to a $\mathcal{F}$-compatible $\G_m$-invariant basis $B$ of $H$. Since $Gr_j^{\mathcal{G}_i} Gr_i^{\mathcal{F}} H = Gr_i^{\mathcal{F}_j} Gr_j^{\mathcal{G}} H$ for all $i,j$, we see that for each $j$, $\bigcup_i B_{i,j}$ forms a basis of $Gr_j^{\mathcal{G}} H$ and that $B$ is the lift of these bases and is thus $\mathcal{G}$-compatible.
\end{proof}

\begin{lemma}\label{equiv2}
    For a linear system $H_{\bullet}$, a filtration $\mathcal{F}$ on $H_{\bullet}$, and a valuation $v$ on $H_{\bullet}$, we have
    \[
    S(H_{\bullet};v) = S_{\G_m} (H_{\bullet}; \mathcal{F};v)
    \]
\end{lemma}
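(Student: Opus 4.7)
The plan is to reduce the claim to each finite level $m$ and then apply Lemma~\ref{basis} in place of the classical choice-of-compatible-basis lemma that underlies \cite[Lemma 3.1]{AZ22} and \cite[Proposition 3.2]{AZ22}. By \cite[Corollary 3.6]{BJ20}, both $S(H_\bullet;v)$ and $S_{\G_m}(H_\bullet;\mathcal{F};v)$ are limits as $m\to\infty$ of their $m$-basis type analogues $S_m(H_\bullet;v)$ and $S_{\G_m,m}(H_\bullet;\mathcal{F};v)$, where the latter is the supremum of $\tfrac{1}{m N_m}\sum_i v(s_i)$ taken over $\G_m$-invariant bases $\{s_i\}$ of $H_m$ that are compatible with $\mathcal{F}$. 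It therefore suffices to establish the identity $S_m(H_\bullet;v)=S_{\G_m,m}(H_\bullet;\mathcal{F};v)$ at each fixed level $m$.

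The inequality $S_{\G_m,m}\le S_m$ is immediate from the definitions, since the former is a supremum over a strictly smaller class of bases. For the reverse direction, I would use the standard observation that every basis of $H_m$ compatible with the filtration $\mathcal{F}_v$ induced by $v$ realizes $S_m(H_\bullet;v)$: each basis vector attains the maximal possible value of $v$ on its graded piece of $\mathcal{F}_v$, and summing yields the supremum. To produce a basis that is simultaneously admissible for the equivariant supremum, I would apply Lemma~\ref{basis} to $H_m$ with the two filtrations $\mathcal{F}$ and $\mathcal{F}_v$. The resulting $\G_m$-invariant basis is compatible with both $\mathcal{F}$ and $\mathcal{F}_v$, so it lies in the restricted class defining $S_{\G_m,m}$ and achieves $S_m(H_\bullet;v)$, giving $S_{\G_m,m}\ge S_m$.

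The main technical point is to verify that $\mathcal{F}_v$ is $\G_m$-equivariant, a hypothesis required for Lemma~\ref{basis}. This rests on $v$ being a $\G_m$-invariant valuation in the intended setting: for any $s\in H_m$ and $\lambda\in\G_m$ one has $v(\lambda\cdot s)=v(s)$, and hence each $\mathcal{F}_v^{\lambda}H_m$ is preserved by the $\G_m$-action. Granting this standing hypothesis, combining the two inequalities at each level $m$ and passing to the limit $m\to\infty$ completes the argument. I do not anticipate any additional obstacles beyond this equivariance check; once it is in place the proof is a direct $\G_m$-equivariant recasting of \cite[Proposition 3.2]{AZ22}.
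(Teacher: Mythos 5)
Your proposal is correct and follows essentially the same route as the paper: both directions reduce to level $m$, the inequality $S_{\G_m,m}\le S_m$ is definitional, and the reverse inequality comes from applying Lemma~\ref{basis} to $H_m$ with the two filtrations $\mathcal{F}$ and $\mathcal{F}_v$ to produce a $\G_m$-invariant basis compatible with both, which realizes $S_m$ while lying in the equivariant class; one then passes to the limit in $m$. Your explicit check that $\mathcal{F}_v$ is $\G_m$-equivariant (needed to invoke Lemma~\ref{basis}) is a point the paper leaves implicit, but it is the intended standing hypothesis since the lemma is only applied to $\bT$-invariant valuations.
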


\begin{proof}
    We have that $S_m(H_{\bullet};v) \geq S_{\G_m,m} (H_{\bullet};v)$ by definition. Let $D_m$ be a m-basis type $\Q$-divisor of $H_{\bullet}$ that is $\G_m$-invariant and compatible with $\mathcal{F}$ and $\mathcal{F}_v$; such a $D_m$ exists by Lemma~\ref{basis}. Then $S_m(H_{\bullet};v) = v(D_m) \leq S_{\G_m,m} (H_{\bullet};v)$. Taking the limit of each side as $m$ approaches infinity yields the lemma.
\end{proof}

\begin{lemma}\label{basiscorresp}
    Fix $m$ such that $h^0(Y, -mK_Y) \neq 0$.  There is a one-to-one correspondence between $\bT$-invariant $m$-basis type $\Q$-divisors of $-K_Y$ and $m$-basis type $\Q$-divisors of $W_{\bullet,\bullet}$. 
\end{lemma}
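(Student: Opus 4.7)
The plan is to exploit the close relationship between the $\bT$-action on $T_m = H^0(Y, -mK_Y)$ and the filtration $\cF_\bullet$ associated to $\ord_\bVi$. Since $\bVi$ is a $\bT$-fixed prime divisor, the filtration $\cF_\bullet$ is $\bT$-equivariant, and in fact the eigenspace decomposition of $T_m$ refines the $\ord_\bVi$-grading: a $\bT$-eigensection of weight $w$ has a $\ord_\bVi$-value determined by $w$ (up to a fixed normalization coming from the $\bT$-weight of $-mK_Y$ along $\bVi$). Consequently any $\bT$-invariant basis of $T_m$ is automatically compatible with $\cF_\bullet$ after reordering, and Lemma~\ref{basis} ensures such a basis can always be selected.

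Given a $\bT$-invariant basis $\{s_i\}$ of $T_m$ compatible with $\cF_\bullet$, I would partition it as $\{s_i\} = \bigsqcup_{j=0}^{2m} B_j$ with $B_j = \{s_i : \ord_\bVi(s_i) = j\}$, so that the images of $B_j$ in the graded piece $\cF_j T_m / \cF_{j+1} T_m$ form a basis of the latter. The argument from the proof of Lemma~\ref{refinement} (Kawamata-Viehweg vanishing when $j \le m$, Zariski decomposition when $m < j \le 2m$) identifies these graded pieces with $W_{m,j}$ for $j \le m$, and with the movable part $M_{m,j}$ of $W_{m,j}$ after extracting the fixed divisor $(j-m)B$ for $m < j \le 2m$. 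Under these identifications, the images of $B_j$ give a basis of each $W_{m,j}$, and assembling over all $j$ produces an $m$-basis type $\Q$-divisor of $W_{\bullet,\bullet}$.

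The inverse construction lifts a basis of each $W_{m,j}$ back through these identifications to $\bT$-eigensections in $T_m$ of the appropriate weight, which together form a $\bT$-invariant basis of $T_m$ compatible with $\cF_\bullet$ and hence a $\bT$-invariant $m$-basis type $\Q$-divisor of $-K_Y$. The two constructions are evidently inverse to each other at the level of bases, since in each direction one passes between a full basis of $T_m$ and its collection of images in the graded pieces. I expect the main technical point to be verifying well-definedness at the level of $\Q$-divisors rather than bases, i.e. checking that $\Q$-divisor classes are preserved under basis changes within each eigenspace or graded piece, and that for $m < j \le 2m$ the fixed-part contribution $(j-m)N_{m,j} B$ on the $W_{\bullet,\bullet}$-side correctly matches the vanishing order of the basis-sum divisor on $Y$ along $\bVi$ and $E$.
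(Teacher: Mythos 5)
Your proposal is correct and follows essentially the same route as the paper: both arguments identify the $\bT$-eigenspace decomposition of $H^0(Y,-mK_Y)$ with the graded pieces of the filtration induced by $\ord_{\bVi}$, identify those graded pieces with the $W_{m,j}$ by the definition of the refinement, and then match $\bT$-invariant bases with collections of bases of the $W_{m,j}$ (modulo scaling). The paper's proof is just a terser version of yours; the well-definedness point you flag is handled there by the observation that basis-type divisors correspond to bases only up to scaling each element, which both constructions respect.
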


\begin{proof}
    Since there is a clear one-to-one correspondence between $m$-basis type $\Q$ divisors and bases of global sections modulo scaling each basis element, this lemma follows from an analogous correspondence between bases of $R_m := H^0(-mK_Y)$ invariant under the $\bT$-action and sets of bases for $H^0(W_{m,j})$ for $0 \leq j \leq 2m$. We can decompose $R_m = \oplus R_{m,j}$ into the direct sum of eigenspaces under the $\bT$-action, where $\bT$ acts on $s\in R_{m,j}$ by $\sigma(\lambda)s = \lambda^j s$; a $\bT$-invariant basis on $R_m$ is exactly a union of bases on each $R_{m,j}$. In particular, each $R_{m,j}$ is isomorphic to the $j$-th graded piece of the weight filtration on $R_m$, which is exactly the filtration induced by $\mathrm{wt}_{-1} = \ord_{\bVi}$. Thus, by construction, $R_{m,j} \cong H^0(W_{m,j})$, and the correspondence between bases follows.
\end{proof}

With lemmata in hand, we begin our argument for the forward implication of the main theorem by constructing divisors over $Y$ whose $\beta$-invariants are dictated by divisors over $V$.

\begin{convention}\label{condiv}
    Let $D_V$ be a divisor over $V$. Then $D_V$ defines a divisor $D_Y$ over $Y$ as follows:

    Let $W \to V$ be a model on which $D_V$ is a divisor. We consider the birational model $X_W := W \times_V X \xrightarrow{\eta} X$ of $X$, and the divisor $D_X := {\phi_W}^* (D_V)$. $D_X$ defines a divisorial valuation on $X$, which we pullback along the blow-up map $\pi$ to a divisorial valuation $\ord_{D_X}$ on $Y$. We then pull back $\ord_{D_X}$ along $\iota$ to a divisorial valuation $\ord_{D_Y}$ achieved by some divisor $D_Y$ over $Y$.
\end{convention}    

\begin{lemma}
    Under the inclusion $\C(V) \subset \C(Y)$ of function fields induced by the composition $\pi \circ \phi$, for a divisor $D_V$ over $V$ and $D_Y$ as in Convention~\ref{condiv}, we have the identification $\ord_{D_Y}|_{\C(V)} = \ord_{D_V}$. 
\end{lemma}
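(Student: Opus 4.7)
The plan is to unravel the construction in Convention~\ref{condiv} step by step and show that each step is invisible to the restriction to $\C(V)$. Concretely, write $\psi : W \to V$ for the birational model, $\phi_W : X_W \to W$ for the first projection and $\eta : X_W \to X$ for the second projection, so that $\phi \circ \eta = \psi \circ \phi_W$. By construction, $\ord_{D_Y} = \ord_{D_X} \circ \iota^{*}$ as valuations on $\C(Y) = \C(X) = \C(X_W)$, where $\ord_{D_X}$ is the divisorial valuation associated to the prime divisor $D_X = \phi_W^{*}(D_V)$ on $X_W$ (note that $\pi : Y \to X$ is birational, so the pullback of a divisorial valuation along $\pi$ is the same valuation viewed on $\C(Y)$).

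The first step is to handle the $\iota^{*}$. Since $\iota : Y \to Y$ respects the conic bundle structure over $V$, we have $(\phi \circ \pi) \circ \iota = \phi \circ \pi$. Therefore $\iota^{*}$ fixes the subfield $(\phi \circ \pi)^{*} \C(V) \subset \C(Y)$. Hence for every $f \in \C(V)$,
\[
\ord_{D_Y}(f) = \ord_{D_X}\bigl( \iota^{*} (\phi \circ \pi)^{*} f \bigr) = \ord_{D_X}\bigl( (\phi\circ\pi)^{*} f \bigr).
\]
Using $\pi$ birational, we can regard the right-hand side as $\ord_{D_X}(\phi^{*}f)$ computed on $\C(X) = \C(X_W)$.

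The second step is to move across the fiber square. The identification $\phi\circ \eta = \psi \circ \phi_W$ gives $\phi^{*} f = \phi_W^{*}(\psi^{*} f)$ in $\C(X_W)$. Since $\phi_W$ is flat (it is a $\P^1$-bundle over $W$) and $D_V$ is a prime divisor on $W$, the pullback $\phi_W^{*} D_V$ is a prime divisor on $X_W$, and for any $g \in \C(W)$ one has $\ord_{\phi_W^{*} D_V}(\phi_W^{*} g) = \ord_{D_V}(g)$ by flatness of $\phi_W$ at the generic point of $D_V$ (the local ring at the generic point of $\phi_W^{*} D_V$ is a localization of $\cO_{W,\eta_{D_V}}[t]$ at the uniformizer pulled back from $D_V$). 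Applied to $g = \psi^{*} f$, this gives
\[
\ord_{D_X}(\phi^{*} f) \;=\; \ord_{\phi_W^{*} D_V}\bigl( \phi_W^{*}(\psi^{*} f) \bigr) \;=\; \ord_{D_V}(\psi^{*} f).
\]

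The final step is to recognize that by the very definition of the divisorial valuation on $\C(V)$ associated to a divisor over $V$, $\ord_{D_V}(f)$ for $f \in \C(V)$ is exactly $\ord_{D_V}(\psi^{*} f)$ computed on $\C(W)$. Chaining the equalities then gives $\ord_{D_Y}(f) = \ord_{D_V}(f)$ for all $f \in \C(V)$, which is the claim. The only mildly non-routine point is the flatness input in the middle step; the rest is pure bookkeeping of function-field inclusions.
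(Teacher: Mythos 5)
Your proposal is correct and follows essentially the same route as the paper: first use that $\iota$ fixes the subfield $\C(V)\subset\C(Y)$ (since it respects the conic bundle structure) to remove the twist by $\iota$, then reduce to the statement $\ord_{D_X}|_{\C(V)}=\ord_{D_V}$, which the paper simply declares clear and you justify via flatness of the $\P^1$-bundle $\phi_W$ at the generic point of $D_V$. Your write-up just supplies the details the paper omits.
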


\begin{proof}
    It is clear that $\ord_{D_X} |_{\C(V)} = \ord_{D_V}$ via the inclusion of function fields induced by $\phi$. Now, $\ord_{D_Y} = \ord_{D_X} \circ \iota$ by construction, so $\ord_{D_Y} = \ord_{D_X}$ on meromorphic functions that are fixed by composition with $\iota$. Since $\iota$ fixes fibres of the conic bundle structure of $\pi \circ \phi: Y \to V$, it in particular fixes $\C(V) \subset \C(Y)$, and the lemma follows.
\end{proof}

\begin{lemma}\label{conventionlogdisc}
    Let $D_V$ be a divisor over $V$ and define $D_Y$ as in Convention~\ref{condiv}. Then $A_Y (D_Y) = A_V (D_V)$.
\end{lemma}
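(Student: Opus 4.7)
The plan is to compute $A_Y(D_Y)$ by following the three-step definition of $D_Y$ from $D_V$ and chaining the relevant log discrepancy computations. First, since $\iota$ is a biregular involution of $Y$, it preserves log discrepancies of valuations, so $A_Y(\ord_{D_X}\circ\iota) = A_Y(\ord_{D_X})$; it therefore suffices to show $A_Y(\ord_{D_X}) = A_V(D_V)$. I will factor this equality through the intermediate $A_X(\ord_{D_X})$, noting that $\ord_{D_X}$ is a well-defined valuation on $K(X) = K(X_W) = K(Y)$ via the birational identifications.

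For the equality $A_X(\ord_{D_X}) = A_V(D_V)$, the key input is that $\phi\colon X \to V$ is a smooth $\P^1$-bundle, so its base change $\phi_W\colon X_W \to W$ is also smooth and the relative dualizing sheaf is preserved: $K_{X_W/W} = \eta^* K_{X/V}$. Combining this with $K_X = \phi^*K_V + K_{X/V}$ and the commutativity $\phi\circ\eta = \pi_W\circ\phi_W$ of the fiber square yields the clean identity
\[
    K_{X_W/X} = \phi_W^* K_{W/V}.
\]
Since $D_X = \phi_W^*D_V$ is the scheme-theoretic pullback of a prime divisor along the smooth morphism $\phi_W$, the coefficient of $D_X$ in $\phi_W^*K_{W/V}$ equals the coefficient of $D_V$ in $K_{W/V}$, giving $A_X(\ord_{D_X}) = A_V(D_V)$.

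For the equality $A_Y(\ord_{D_X}) = A_X(\ord_{D_X})$, the smoothness of $V$ and $B$ forces $X$ to be smooth and $B_\infty$ to be a smooth codimension-two subvariety, so the blow-up formula gives $K_Y = \pi^*K_X + E$. A standard common-resolution computation then yields $A_X(v) - A_Y(v) = v(E)$ for any valuation $v$ on $K(X) = K(Y)$. The problem thus reduces to showing $\ord_{D_X}(E) = 0$, or equivalently that the center $c_Y(\ord_{D_X})$ is not contained in $E$.

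I expect this last verification to be the main obstacle, as it is the only step that genuinely uses the geometry of the construction. The center $c_X(\ord_{D_X}) = \eta(D_X)$ is the full preimage $\phi^{-1}(c_V(D_V))$, a $\P^1$-bundle over $c_V(D_V)$. In particular, it meets $V_0$ in an isomorphic copy of $c_V(D_V)$, and $V_0$ is disjoint from $V_\infty \supset B_\infty$. Hence $c_X(\ord_{D_X})$ is not contained in $B_\infty$, so its strict transform $c_Y(\ord_{D_X})$ under $\pi$ is not contained in $E$, giving $\ord_{D_X}(E) = 0$ and closing the chain of equalities.
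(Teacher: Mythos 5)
Your proof is correct and follows essentially the same route as the paper: reduce via the involution $\iota$ to computing $A_Y(\ord_{D_X})$, pass from $Y$ to $X$ using $K_Y = \pi^*K_X + E$, and pass from $X$ to $V$ using the canonical bundle formula for the $\P^1$-bundle (the paper's identity $K_{X_W} - \eta^*K_X = \phi_W^*(K_W - f^*K_V)$ is your $K_{X_W/X} = \phi_W^*K_{W/V}$). The only difference is that you explicitly verify $\ord_{D_X}(E) = 0$ by locating the center $\phi^{-1}(c_V(D_V))$ away from $B_\infty$, a step the paper's proof uses but leaves implicit; your justification of it is sound.
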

\begin{proof}
First we note that $A_Y(D_Y) = A_Y (\ord_{D_X}) $ since $\ord_{D_X} \circ \iota = \ord_{D_Y}$. Then, we compute $A_Y (D_X)$ as follows:
\begin{align*}
    A_Y(D_X)  &= \ord_{D_X}(K_{-/Y}) + 1 \\ 
            &= \ord_{D_X}(K_{-/X}) - \ord_{D_X}(E) + 1 \\
            &= A_X(D_X) - \ord_{D_X}(E) \\
            &= A_X(D_X) \\
            &= A_V (D_V) 
\end{align*}
where the notation $K_{-/Z}$ for $Z = X,Y$ denotes the relative canonical divisor of an appropriate choice of resolution of $Z$, and the last equality follows from the canonical bundle formula for projective bundles. In particular, $K_{X_W} - \eta^*(K_X) = \phi^*(K_W - f^*K_V)$.
\end{proof}

\begin{prop}\label{conventionbeta}
    For $D_V$ a divisor over $V$ and $D_Y$ as in Convention~\ref{condiv}, $\beta_Y(D_Y) = \beta_{V,aB} (D_V)$.
\end{prop}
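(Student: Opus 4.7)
The plan is to split $\beta_Y(D_Y) = A_Y(D_Y) - S_Y(D_Y)$ and handle the two summands separately. Lemma~\ref{conventionlogdisc} has already established $A_Y(D_Y) = A_V(D_V)$; together with the standard formula $A_{V,aB}(D_V) = A_V(D_V) - a\ord_{D_V}(B)$, the identity $\beta_Y(D_Y) = \beta_{V,aB}(D_V)$ reduces to proving
\[
S_Y(D_Y) = S_{V,aB}(D_V) + a\ord_{D_V}(B).
\]

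To compute $S_Y(D_Y)$, I would first reduce to the $\bT$-invariant valuation $\ord_{D_X}$. By construction $\ord_{D_Y} = \ord_{D_X} \circ \iota$, and since $\iota$ is an automorphism of $Y$ preserving $-K_Y$, pullback by $\iota$ preserves $S_Y$; hence $S_Y(D_Y) = S_Y(\ord_{D_X})$. The valuation $\ord_{D_X}$ is $\bT$-invariant because $D_X = \phi^*D_V$ is pulled back through the $\bT$-equivariant morphism $\phi \circ \pi\colon Y \to V$. Applying Lemma~\ref{equiv2}, $S_Y(\ord_{D_X})$ equals the limit of suprema over $\bT$-invariant $m$-basis type divisors of $-K_Y$; by Lemma~\ref{basiscorresp}, these correspond bijectively to $m$-basis type divisors of the refinement $W_{\bullet,\bullet}$. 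In a local trivialization of $\cL$ away from $B$, a $\bT$-invariant basis element $s_{i,j}$ of weight $j$ takes the form $h_{i,j}(\pi^*u)^j(\pi^*v)^{2m-j}$, with $h_{i,j}$ the section on $V$ determined by the description of $W_{m,j}$ in Lemma~\ref{refinement}. Since $\ord_{D_X}$ is pulled back from $V$ through $\phi$, it only detects the $V$-component of $s_{i,j}$, so $\ord_{D_X}(s_{i,j}) = \ord_{D_V}(h_{i,j})$.

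The sum $\sum_{i,j}\ord_{D_X}(s_{i,j})$ now splits by weight range: for $j \leq m$ the $h_{i,j}$ vary freely in $H^0(V,(m(r-1)+j)\cL)$, while for $j > m$ the section factors as $h_{i,j} = b^{j-m} h_{i,j}'$ with $b$ a local equation for $B$ and $h_{i,j}' \in H^0(V,(m(r+1)-j)\cL)$, contributing an extra $(j-m)\ord_{D_V}(B)$ per basis element. Normalizing by $mN_m$ and passing to the limit, Lemma~\ref{anr} identifies the aggregated $(j-m)\ord_{D_V}(B)$ terms with $a\ord_{D_V}(B)$. The remaining free-part contributions, optimized weight by weight via the standard basis-type-to-volume asymptotic, converge to
\[
\frac{1}{\vol(Y)}\int_0^2 \vol(L_\alpha)\,S_V(L_\alpha; D_V)\, d\alpha,
\]
where $L_\alpha = (r-1+\alpha)\cL$ for $\alpha \leq 1$ and $L_\alpha = (r+1-\alpha)\cL$ for $\alpha > 1$. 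Using the scaling identity $S_V(c\cL;D_V) = c\cdot S_V(\cL;D_V)$ together with the specific algebraic form of $a$ from the statement of Theorem~\ref{main}, this integral simplifies to $(r-2a)S_V(\cL;D_V) = S_{V,aB}(D_V)$, completing the desired identity. The main obstacle is carrying out this asymptotic integration cleanly; the relevant algebraic identity is essentially the same one that underlies the computation of $\lim_m a_m$ in Lemma~\ref{anr}.
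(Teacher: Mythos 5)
Your overall strategy is the paper's: reduce to $S_Y(D_Y)=S_{V,aB}(D_V)+a\ord_{D_V}(B)$ via Lemma~\ref{conventionlogdisc}, compute $S_Y$ through $\bT$-invariant basis-type divisors using Lemmas~\ref{basis}, \ref{equiv2}, \ref{basiscorresp}, split by weight, and identify the aggregated $B$-terms with $a\ord_{D_V}(B)$ via Lemma~\ref{anr}. The final formula is right, but the central step --- evaluating $\ord_{D_X}$ on a weight-$j$ eigensection --- is not justified and is false as stated. Arguing ``in a local trivialization of $\cL$ away from $B$'' proves nothing in the only case where the proposition has content, namely $c_V(D_V)\subseteq B$ (so $\ord_{D_V}(B)>0$): there the center of $\ord_{D_X}$ on $Y$ sits inside $F$, over $B$, and is invisible to such a trivialization. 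Moreover the attribution of the $b$-factor is backwards. Indexing weight by $j=\ord_{\bVi}$ as in $W_{m,j}$, an eigensection with $j>m$ has divisor $j\bVi+(2m-j)V_0+(j-m)E+\pi^*\phi^*\mathrm{div}(h')$ with $h'$ a \emph{free} section of $(mr+m-j)\cL$; the fixed part $(j-m)B$ of $W_{m,j}$ is $(j-m)E|_{\bVi}$, not a literal factor of $b$ in the section. It is the range $j\le m$ whose sections genuinely factor through $b^{m-j}$, with divisor $j\bVi+(2m-j)V_0+(m-j)F+\pi^*\phi^*\mathrm{div}(h')$. Since $\ord_{D_X}(E)=0$ while $\ord_{D_X}(F)=\ord_{D_V}(B)$, the correction $(m-j)_+\ord_{D_V}(B)$ attaches to $j\le m$, not $(j-m)_+\ord_{D_V}(B)$ to $j>m$ as you assert.

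Your total nevertheless comes out correct because $N_{m,j}=N_{m,2m-j}$, so $\sum_j N_{m,j}(j-m)_+=\sum_j N_{m,j}(m-j)_+$; equivalently, the per-weight formula you wrote down for $\ord_{D_X}$ is the correct one for $\ord_{D_Y}=\ord_{D_X}\circ\iota$, and you had already noted $S_Y(\ord_{D_X})=S_Y(\ord_{D_Y})$. So the conclusion survives by the $\iota$-symmetry rather than by the argument given. The paper avoids the issue by working with $\ord_{D_Y}$ directly: it records $\ord_{D_Y}(V_0)=\ord_{D_Y}(\bVi)=\ord_{D_Y}(F)=0$ and $\ord_{D_Y}(E)=\ord_{D_V}(B)$, then pairs these against the coefficients of a compatible $\bT$-invariant basis-type divisor. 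To repair your write-up, either switch to $\ord_{D_Y}$ and verify those four values, or keep $\ord_{D_X}$ and track the $E$- versus $F$-components of the eigensections as above. The remaining steps (the limit of the fixed-part coefficients via Lemma~\ref{anr}, and the identity reducing the movable-part integral to $(r-2a)S_V(\cL;D_V)=S_{V,aB}(D_V)$) are sound and agree with the paper.
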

\begin{proof}
    Using Lemma~\ref{basis}, let $\Delta_m^{Y}$ be a $\bT$-invariant $m$-basis type $\Q$-divisor of $-K_Y$ compatible with $D_Y$ and $\bVi$. Then $\Delta_m^Y$ decomposes $\bT$-equivariantly as below, with $\Supp(\Gamma)$ consisting of $\bT$-invariant divisors distinct from $\bVi$ (see \cite[Section 3.1]{AZ22}). 
\[
\Delta_m^{Y} = \Gamma + S_m(-K_Y, \bVi) \cdot \bVi
\]
    In particular, $\Gamma|_{\bVi}$ is the $m$-basis type $\Q$-divisor of $W_{\bullet,\bullet}$ corresponding to $\Delta_m^{Y}$ under Lemma \ref{basiscorresp}, i.e. $\Gamma|_{\bVi} = \frac{1}{m N_m} \sum_j (\Delta_{m,j} + a_{m,j} N_{m,j} B)$, where $\Delta_{m,j}$ is a basis-sum divisor of the movable part of the refinement $M_{m,j}$. Since $\Delta_m^{Y}$ is compatible with $D_Y$, each $\Delta_{m,j}$ is compatible with the restriction of $\ord_{D_Y}$ to $\C(V)$, which in particular means that each $\Delta_{m,j}$ is compatible with $D_V$. Since the irreducible components of $\Supp(\Gamma)$ are invariant under the $\bT$ action, we see that 
\[    
    \Gamma = \frac{1}{m N_M} \left( \sum_j  \Gamma_{j} + a_{m,j}  N_{m,j}  E \right)
\]
where $\Gamma_j|_{\bVi} = \Delta_{m,j}$ and each $\Gamma_j$ is strictly supported on the $\bT$-invariant divisors on $Y$. We compute $\ord_{D_Y} (\Gamma_j)$ by decomposing $\Gamma_j$ into its parts supported on specific $\bT$-invariant divisors. Let $D$ denote all divisors on $V$ (other than $B$) such that $\Gamma$ has support on ${\pi}_*^{-1} (\phi^*D)$. We note that $\ord_{D_Y} (\bVi), \ord_{D_Y} (V_0) $ and $ \ord_{D_Y} (F)$ are all $0$: for a choice any one of these three divisors we can find an open neighborhood $U$ of $c_Y(D_Y)$ that has empty intersection with the chosen divisor. Thus, on $U$, the chosen divisor is defined locally by a nonvanishing holomorphic function $f$, in particular $f$ has vanishing order $0$ at $c_Y(D_Y)$. Similarly, we compute $\ord_{D_Y} (E) = \ord_{{\pi}_*^{-1}(D_X)}(F) = \ord_{D_X} (\phi^*(B)) = \ord_V (B)$ and $\ord_{D_Y} ({\pi}_*^{-1} (\phi^*D)) = \ord_{D_V} (D)$. Thus, we have the following computation:

\begin{align*}
    \ord_{D_Y} (\Gamma_j)   &= \coeff_{V_0} (\Gamma_j) \cdot \ord_{D_Y} (V_0) + \coeff_{\bVi} (\Gamma_j) \cdot \ord_{D_Y} (\bVi) \\
                            &+ \coeff_{E} (\Gamma_j) \cdot \ord_{D_Y} (E) + \coeff_{F} (\Gamma_j) \cdot \ord_{D_Y} (F)\\
                            &+ \coeff_{{\pi}_*^{-1} (\phi^*D)} (\Gamma_j) \cdot \ord_{D_Y} ({\pi}_*^{-1} (\phi^*D)) \\
                            &= \coeff_{E} (\Gamma_j) \cdot \ord_{D_Y} (E) + \coeff_{{\pi}_*^{-1} (\phi^*D)} (\Gamma_j) \cdot \ord_{D_Y} ({\pi}_*^{-1} (\phi^*D)) \\
                            &= \coeff_{B} (\Delta_{m,j}) \cdot \ord_{D_V} (B) + \coeff_{D} (\Delta_{m,j}) \ord_{D_V} (D) \\
                            &= \ord_{D_V} (\Delta_{m,j})
\end{align*}
As an aside, we note that $\coeff_{\bVi} (\Gamma_j) = 0$ from the decomposition of $\Delta_m^{Y}$.

With this, we have the following computation of $S_m(D_Y)$:

\begin{align*}
    S_m(D_Y) = \ord_{D_Y} \Delta_m^Y    &= \ord_{D_Y} \left( \Gamma + S_m(-K_Y, \bVi) \cdot \bVi \right) \\
                                        &=  \frac{1}{m N_m} \ord_{D_Y} \left( \sum_j  \Gamma_{j} + a_{m,j}  N_{m,j}  E \right) \\
                                        &+ S_m(-K_Y, \bVi) \cdot \ord_{D_Y}(\bVi) \\
                                        &=  \frac{1}{m N_m} \ord_{D_Y} \left( \sum_j  \Gamma_{j} + a_{m,j}  N_{m,j}  E \right)   \\     
                                        &= \frac{1}{m N_m}  \left( \sum_j  \ord_{D_Y} (\Gamma_{j}) + a_{m,j}  N_{m,j}  \ord_{D_Y} (E) \right) \\
                                        &=  \frac{1}{m N_m}  \left( \sum_j  \ord_{D_V} (\Delta_{m,j}) + a_{m,j}  N_{m,j}  \ord_{D_V} (B) \right) \\
                                        &= \frac{1}{m N_m} \left( \sum_j  m N_{m,j} S_{mr-|m-j|} (D_V)  + a_{m,j} N_{m,j} \ord_{D_V} B \right)
\end{align*}
Taking the limit as $m$ tends to $\infty$, we are left with:
\begin{align*}
    S(D_Y)  &= S(D_V) + a \ord_{D_V} B
\end{align*}

Also by Lemma~\ref{conventionlogdisc} we have $A_V(D_V) = A_Y(D_Y)$, so we have
\begin{align*}
     \beta_{V,aB}(D_V) &= A_{V,aB} (D_V) - S (D_V) \\
        &= A_V(D_V) - a \ord_{D_V} (B) - S (D_V) \\
        &= A_Y (D_Y) - S ( D_Y) \\
        &= \beta_Y (D_Y) 
\end{align*}
\end{proof}
\newpage
\begin{prop}\label{ssforward}
    If $(V,aB)$ is K-unstable, then so is $Y$. 
\end{prop}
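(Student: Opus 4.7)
The strategy is immediate once one has Proposition~\ref{conventionbeta} and Convention~\ref{condiv} in hand: the construction $D_V \mapsto D_Y$ is a $\beta$-preserving pipeline from divisors over $V$ (with the pair structure $aB$) to divisors over $Y$, so any destabilizer on $(V,aB)$ pushes forward to a destabilizer on $Y$.

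Concretely, the plan is as follows. Since $(V,aB)$ is K-unstable, by definition there exists a prime divisor $D_V$ on some birational model $W \to V$ with $\beta_{V,aB}(D_V) < 0$. I then invoke Convention~\ref{condiv} to produce the associated divisor $D_Y$ over $Y$: pull $D_V$ back to $X_W = W \times_V X$, view the resulting divisor $D_X$ as a divisorial valuation on $X$, pull that valuation back to $Y$ via the blow-up $\pi$, and finally twist by the involution $\iota$ to obtain $\ord_{D_Y} = \ord_{D_X} \circ \iota$. By Proposition~\ref{conventionbeta} we then have
\[
\beta_Y(D_Y) \;=\; \beta_{V,aB}(D_V) \;<\; 0,
\]
which by Definition~\ref{defstab} shows $Y$ is K-unstable.

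The only point to double-check is that ``K-unstable'' in the sense of Definition~\ref{defstab} really does guarantee the existence of a divisorial destabilizer (as opposed to merely a non-divisorial valuation or filtration with negative Futaki invariant); this is built into the definition adopted in this paper, where K-semistability is stated as $\beta_{X,D}(E) \geq 0$ for all divisors $E$ over $X$. So the proof is essentially one line of application of the previous proposition, and there is no real obstacle — all of the technical work was done in establishing Proposition~\ref{conventionbeta}, in particular the computations matching the asymptotic fixed part coefficient $a(n,r)$ of Lemma~\ref{anr} with the log-pair coefficient on $B$.
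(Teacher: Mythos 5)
Your proposal is correct and follows the paper's argument exactly: take a destabilizing divisor $D_V$ with $\beta_{V,aB}(D_V)<0$, form $D_Y$ via Convention~\ref{condiv}, and conclude $\beta_Y(D_Y)<0$ from Proposition~\ref{conventionbeta}. Your side remark that the paper's definition of K-(un)stability is phrased in terms of divisorial valuations, so a divisorial destabilizer is guaranteed, is an accurate reading of Definition~\ref{defstab}.
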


\begin{proof}
    Assume that $(V,aB)$ is K-unstable. Let $D_V$ be a destabilizing divisor of the pair $(V,aB)$, that is a divisor on a birational model $W$ of $V$ such that $\beta_{V,aB} (D_V) < 0$. Then by Proposition~\ref{conventionbeta} $\beta_Y (D_Y) < 0$.  
\end{proof}

\subsection{K-Polystability}
By Proposition~\ref{ssforward} and Proposition~\ref{ssreverse}, if either of $(V,aB)$ or $Y$ is not K-semistable, then neither are K-semistable. Since K-polystability implies K-semistability, we assume, for this section, that both of $Y$ and the pair $(V,aB)$ are K-semistable. 

\subsubsection{Forward Implication}
\begin{prop}\label{psforward}
    Suppose $Y$ is K-polystable. Then the log Fano pair $(V,aB)$ is K-polystable.
\end{prop}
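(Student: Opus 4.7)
The plan is as follows. Assume $Y$ is K-polystable; by Proposition~\ref{ssforward} we already know $(V,aB)$ is K-semistable, so it suffices to show that any divisor $D_V$ over $V$ with $\beta_{V,aB}(D_V)=0$ is a product-type divisor on $(V,aB)$. Using Convention~\ref{condiv}, associate to $D_V$ the $\bT$-invariant divisor $D_Y$ over $Y$. By Proposition~\ref{conventionbeta}, $\beta_Y(D_Y)=0$. Since K-polystability of $Y$ is equivalent to $\bT$-equivariant K-polystability by Theorem~\ref{equiv}, and $D_Y$ is $\bT$-invariant, the test configuration $TC(D_Y)$ is $\bT$-equivariantly a product, i.e., there exists a one-parameter subgroup $\lambda:\G_m\to\Aut(Y,-K_Y)$ commuting with $\bT$ such that $TC(D_Y)\cong(Y,-K_Y)\times\A^1$ with the $\G_m$-action given by $\lambda$ on the first factor and by scaling on the second.

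The main step is descending $\lambda$ to a one-parameter subgroup of $\Aut(V,aB)$. Since $\lambda$ commutes with $\bT$ and $\G_m$ is connected, $\lambda$ preserves each irreducible component of the $\bT$-fixed locus $\bVi\sqcup V_0\sqcup(E\cap F)$. Identifying $\bVi\cong V$ via the positive section, we obtain a one-parameter subgroup $\bar\lambda:\G_m\to\Aut(V)$. Moreover, $\lambda$ preserves the exceptional divisor $E$ of $\pi$ (the unique prime divisor contracted by $\pi$), hence preserves its image $B_\infty\cong B$, and via the identification $\bVi\cap E\cong B$ we see $\bar\lambda$ preserves $B$, so that $\bar\lambda$ factors through $\Aut(V,aB)$. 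A rigidity argument shows that the conic bundle map $\pi\circ\phi:Y\to V$ intertwines $\lambda$ and $\bar\lambda$: the two morphisms $(\pi\circ\phi)\circ\lambda(s)$ and $\bar\lambda(s)\circ(\pi\circ\phi)$ agree on the section $\bVi$, and both are $\pi\circ\phi$ composed with an automorphism of $V$, so they coincide globally.

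To conclude, I would compare valuations. The product test configuration of $\lambda$ has associated valuation equal to a positive rational multiple of the weight valuation $v_\lambda$ of $\lambda$ on $\C(Y)$; in particular $\ord_{D_Y}$ is proportional to $v_\lambda$. By the intertwining of $\pi\circ\phi$ with $\lambda$ and $\bar\lambda$, the restriction $v_\lambda|_{\C(V)}$ coincides with the weight valuation $v_{\bar\lambda}$ on $\C(V)$. Combining with the identity $\ord_{D_Y}|_{\C(V)}=\ord_{D_V}$ established in the lemma preceding Lemma~\ref{conventionlogdisc}, we obtain that $\ord_{D_V}$ is proportional to $v_{\bar\lambda}$. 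Hence $TC(D_V)$ is the product test configuration on $(V,aB)$ induced by $\bar\lambda$, exhibiting $D_V$ as product-type. The main obstacle I expect is the descent step: making precise that $\lambda$ induces $\bar\lambda\in\Aut(V,aB)$ intertwined by $\pi\circ\phi$, and verifying that $\bar\lambda$ genuinely preserves $B$ (not just $B_\infty$ in some misleading way); this all hinges on the commutation of $\lambda$ with $\bT$ coming from $\bT$-equivariant K-polystability and on the canonical identification of $V$ with the $\bT$-fixed component $\bVi$.
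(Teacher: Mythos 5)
Your proposal is correct and follows essentially the same route as the paper: pass from $D_V$ to $D_Y$ via Convention~\ref{condiv} and Proposition~\ref{conventionbeta}, use K-polystability of $Y$ to get a product test configuration, use the commutation of the induced $\G_m$-action with $\bT$ together with connectedness of orbits to see that it preserves $\bVi\cong V$, and then match valuations via $\ord_{D_Y}|_{\C(V)}=\ord_{D_V}$. The paper phrases the descent as taking the orbit closure of $\bVi\times\{1\}$ inside the total space $\cY\cong Y\times\A^1$ rather than as a one-parameter subgroup $\bar\lambda$ of $\Aut(V,aB)$, but the content is the same.
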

\begin{proof}
    
Suppose $Y$ is K-polystable. Let $D_V$ be a divisor over the pair $(V,aB)$ such that $\beta_{V,aB}(D) = 0$. To show $(V,aB)$ is K-polystable it suffices to show that $D_V$ is a product-type divisor. We know that $\beta_Y(D_Y) = \beta_{V,aB}(D_V) = 0$ with $D_Y$ as in Convention~\ref{condiv}, so $D_Y$ is a product-type divisor. Denote this test configuration as $\cY \cong Y\times \A^1$, and denote the $\G_m$-action of the test configuration as $\sigma_{D_Y}: \G_m \times \cY \to \cY$. Further, since $D_Y$ is invariant with respect to the $\bT$-action on $Y$, $\cY$ is a $\bT$-equivariant test configuration.

Consider the closure of the orbit $\overline{\text{orb}}_{\sigma_D}(\bVi \times \{1\}) \subset \cY$ under the action of $\sigma_{D_Y}$ on $\cY$. Let $\cV$ denote this closure. Since $\cY$ is a $\bT$-invariant test configuration, the $\G_m$-action on $Y$ induced by $\sigma_{D_Y}$ commutes with the $\T$-action $\sigma$. We will denote this restriction as $\sigma_{D_Y}^0$. For $t \in \bT, s \in \G_m,$ and $y \in \bVi \subset Y$, we have
\begin{align*}
    \sigma (t) \sigma_{D_Y}^0 (s) (y)  &= \sigma_{D_Y}^0 (s) \sigma (t) (y)  \\
                                        &=  \sigma_{D_Y}^0 (s) (y)
\end{align*}
So $\sigma_{D_Y}^0 (s) (y)$ is in the fixed locus of $\sigma$, which is $\bVi \sqcup V_0 \sqcup (E \cap F)$. However, since $\sigma_{D_Y}^0$ has connected orbits, we see that $\sigma_{D_Y}^0 (s) (y) \in \bVi$. Thus, $\sigma_{D_Y}^0$ maps points in $\bVi$ to points in $\bVi$, which implies that $\sigma_{D_Y}$ maps points in $\bVi \times \A^1 \subset \cY$ to points in $\bVi \times \A^1$. From this we see that $\cV \cong V \times \A^1$ and that $\cV$ with the restriction of the $\G_m$-action $\sigma_{D_Y}$ to $\cV$ forms a product test configuration of $V$. 

The valuation associated to $\cV$ is, by definition, is $\ord_{\cV_0}|_{\C(V)}$. We see that the $\bT$-action $\sigma$ extends to $\cY$, and induces a weight decomposition on $\C(\cY)$ (similar to that on $\C(Y)$) such that $\C(\cY)^\bT \cong \C(\cV)$. From this, we see the $\ord_{\cY_0}|_{\C(\cV)} = \ord_{\cV_0}$. Since $\ord_{D_Y} = b \ord_{\cY_0}|_{\C(\cY)}$ for some $b \in \Z_{>0}$, we have that $b\ord_{\cV_0}|_{\C(V)} = b\ord_{\cY_0}|_{\C(V)} = \ord_{D_Y}|_{\C(V)} =\ord_{D_V}$. Thus, we see that the test configuration associated to $D_V$ is isomorphic to $\cV$ and is thus a product test configuration.
\end{proof}

\subsubsection{Reverse Implication}
\begin{prop}\label{psreverse}
    Suppose the log pair $(V,aB)$ is K-polystable. Then $Y$ is K-polystable.
\end{prop}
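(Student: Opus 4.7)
The plan is to mirror the argument of Proposition~\ref{psforward} in reverse. Since K-polystability implies K-semistability, Proposition~\ref{ssreverse} gives that $Y$ is K-semistable, and by Theorem~\ref{equiv} it suffices to establish $\bT$-equivariant K-polystability of $Y$. So let $E$ be a $\bT$-invariant dreamy divisor over $Y$ with $\beta_Y(E)=0$; I must show $E$ is product-type.

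The horizontal case is immediate: by Lemma~\ref{horizontal}, the only horizontal $\bT$-invariant divisors over $Y$ are $\bVi$ and $V_0$, and each is the divisor associated to one of the coweights $\pm 1$ of the standard $\bT$-action on $Y$. Their associated test configurations are the trivial compactifications of the $\bT$-action, and hence are manifestly product-type. For vertical $E$, I invoke the structural description from the proof of Lemma~\ref{vertical}: there exist a valuation $\mu$ on $V$ and an integer $b$ such that $\ord_E = v_{\mu,b}$, where $v_{\mu,b}(\sum_i f_i t^i) = \min_i\{\mu(f_i)+bi\}$ under the isomorphism $\C(Y)\cong \C(V)(t)$ induced by a local trivialization of $\cL$; the verticality of $E$ forces $\mu$ nontrivial, and the computation there combined with Lemma~\ref{futaki} gives $\beta_Y(v_{\mu,b}) = \beta_Y(\mu^*)$ with $\mu^* := v_{\mu,0}$, so $\beta_Y(\mu^*) = 0$.

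I next extend Proposition~\ref{conventionbeta} from divisors to divisorial valuations: an essentially identical computation, using $\bT$-invariant $m$-basis type $\Q$-divisors on $Y$ compatible with $\mu^*$ (whose existence is ensured by Lemma~\ref{basis}), yields $\beta_Y(\mu^*) = \beta_{V,aB}(\mu)$ and thus $\beta_{V,aB}(\mu) = 0$. By the K-polystability of $(V,aB)$, the valuation $\mu$ (divisorial because $\ord_E$ is and because the $\bT$-invariant locus on a resolution of $E$ is a birational model of $V$) induces a product test configuration $(\cV, a\cB) \cong (V, aB)\times \A^1$ with some $\G_m$-action $\sigma_V$. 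To lift this to $Y$, let $\cL_\cV$ be the pullback of $\cL$ to $\cV$ along the product projection, set $\cX := \P_\cV(\cL_\cV \oplus \cO_\cV)$, and define $\cY := \Bl_{\cB_\infty}\cX$. The action $\sigma_V$ lifts canonically through the vector bundle construction and the blow-up to a $\G_m$-action on $\cY \cong Y\times\A^1$, producing a product test configuration of $Y$. Mirroring the last paragraph of the proof of Proposition~\ref{psforward} identifies this $\cY$ as the test configuration associated to $\mu^*$; the test configuration associated to $\ord_E = v_{\mu,b}$ is then obtained from $\cY$ by twisting the $\G_m$-action by $b$ copies of the $\bT$-action, so it remains product-type, completing the argument.

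The principal obstacle is the careful extension of Proposition~\ref{conventionbeta} to divisorial valuations $\mu^*$, together with the verification that the restricted valuation $\mu$ on $V$ is indeed divisorial and dreamy over $(V,aB)$ so that its associated test configuration makes sense. A secondary technical point is confirming that the projective-bundle-then-blow-up lifting construction converts the product test configuration $(\cV, a\cB)$ of $(V,aB)$ into the test configuration of $Y$ whose associated valuation is $\mu^*$, rather than some other $\bT$-equivariant object.
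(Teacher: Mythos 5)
Your proposal is correct and follows essentially the same route as the paper: reduce to $\bT$-equivariant K-polystability, dispose of the horizontal case via Lemma~\ref{horizontal} and the coweights $\pm1$, and in the vertical case restrict to $\mu=\ord_E|_{\C(V)}$, transfer $\beta_Y(\mu^*)=\beta_{V,aB}(\mu)=0$ via the Convention~\ref{condiv} correspondence, then lift the resulting product test configuration of $(V,aB)$ through $\P_{\cV}(\cL_{\cV}\oplus\cO_{\cV})$ and the blow-up of $\cB_\infty$ (using a $\G_m$-linearization of $\cL$) and conclude by twisting. The points you flag as obstacles (divisoriality of $\mu$, identifying the lifted configuration with that of $\mu^*$) are exactly the ones the paper handles, the latter via \cite[Example 3.7]{Li22}.
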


\begin{proof}
Suppose that the pair $(V,aB)$ is K-polystable; we seek to show the same for the construction $Y$. In fact, due to \ref{equiv}, it suffices to show that $Y$ is $\bT$-equivariantly K-polystable. 

Suppose $D$ is a $\bT$-invariant divisor over $Y$ such that $\beta_Y(D) = 0$; we seek to show that the test configuration induced by $D$ is a product test configuration. If $D$ is horizontal, then $D = V_0$ or $\bVi$; both of these induce product test configurations. Indeed, $\ord_{V_0} = \mathrm{wt}_\xi$ and $\ord_{\bVi} = \mathrm{wt}_{-\xi}$, so the test configurations induced by $D$ is the product test configuration given by either the $\bT$-action on $Y$ or its inverse.

Now, assume $D$ is vertical. Since the underlying space of two test configurations are isomorphic if one of the associated valuations is achieved by twisting the other by a suitable cocharacter, using Lemma~\ref{vertical} we may assume $c_Y(D) \cap \bVi \neq \emptyset$. 

Let $\mu := \ord_D |_{\C(V)}$, in which case we see via \cite{Li22} that $\ord_D = v_{\mu,n\xi}$ for some $n \in \N$, and denote the divisor associated to $\mu$ as $D_\mu$. Then $\beta_{V,aB}(D_\mu) = \beta_Y(D_{\mu,Y})$, with $D_{\mu,Y}$ as in Convention~\ref{condiv} with respect to $D_\mu$. Since $\ord_{D_{\mu,Y}} = v_{\mu,0}$, we see that $D_{\mu,Y}$ and $D$ differ by a twist and thus $\beta_{V,aB}(D_\mu) = \beta_Y(D_{\mu,Y}) = \beta_Y(D) = 0$. Thus, by our assumption that the log pair $(V,aB)$ is K-polystable, the test configuration associated to $D_\mu$ is a product test configuration.

In what follows, we use a linearization on $\cL$ of the $\G_m$ action $\sigma_\mu$ on $V$ induced by the product test configuration associated to $D_\mu$. Such a linearization exists by \cite[Proposition 2.4]{KKLV89} and its antecedent remark.

Denote the test configuration associated to $\mu$ as $\cV_\mu \cong V \times \A^1$. Lifting $\cL$ along the first projection to $\text{pr}_2^*(\cL) =: \bL$, we can then consider the variety $\cX := \P_{\cV_\mu} (\bL \oplus \cO_{\cV_\mu})$ (which in fact is the test configuration of $X$ associated to some $\bT$-invariant lift of the valuation $\mu$). The aforementioned $\G_m$-linearization of  $\sigma_\mu$ on $\cL$ lifts to a linearization on $\bL \oplus \cO_{V\times A^1}$, thus we have an induced $\G_m$ action on $\cX$ that fixes a positive section of the $\P^1$-bundle $\cX \to V\times\A^1$. Thus, we may define the variety $\cY$ to be the blow-up of $\cX$ along the image of $B \times \A^1$ along the $\bT$-invariant positive section. Thus, the $\G_m$ action lifts to $\cY$, which is in fact a product test configuration for $Y$. 

By construction we have $\C(\cY) \isom \C(Y)(s) \isom \C(V)(t,s)$, and the valuation associated to the test configuration $\cY$ is the restriction $w := \ord_{s=0}|_{\C(Y)}$. The further restriction of $w$ to $\C(V)$ is in fact $\mu$, so $ w = v_{\mu, k\xi}$ for some $k \in \N$, and thus $w$ is equivalent to some twist of $\ord_D$, so their associated test configurations are isomorphic as varieties by \cite[Example 3.7]{Li22}, and thus the test configuration associated to $D$ is a product test configuration.
\end{proof}

\begin{proof}[Proof of Theorem~\ref{main}]
By combining Proposition~\ref{ssforward} and Proposition~\ref{ssreverse}, we have that $(V,aB)$ is K-semistable if and only if $Y$ is K-semistable. Similarly, Proposition~\ref{psforward} and Proposition~\ref{psreverse} together show that $(V,aB)$ is K-polystable if and only if $Y$ is K-polystable.
\end{proof}

With Theorem~\ref{main} established, we now prove Corollary~\ref{ceaconj} through an application of interpolation of K-stability. 

\begin{proof}[Proof of Corollary~\ref{ceaconj}]
  By \cite{LZ22, Zhuang21}, the K-polystability of $W$ is equivalent to that of the pair $(V, \frac{1}{2}B)$. A quick calculation shows that $0 < a(n,r) < \frac{1}{2}$ for all $n,r$, so by interpolation of K-stability (see e.g. \cite[Proposition 2.13]{ADL24}), we have that $(V,aB)$ is K-polystable, so by Theorem~\ref{main}, $Y$ is K-polystable. 
\end{proof}

\section{Examples}\label{sec:examples}
\begin{exa}
    The Fano family \textnumero $3.9$ (previously known, see \cite{ACCFKMGSSV23}): Letting $V = \P^2$, $r = \frac{3}{2}$, and $B$ be a smooth quartic curve, then $\cL$ is $\cO(2)$, $X$ is $\P(\cO(2) \oplus \cO)$, and we have then that by Theorem~\ref{main} $Y$ is a member of the Fano family \textnumero $3.9$ with K-poly/semistability equivalent to that of the pair $(V,\frac{9}{52}B)$. By \cite{ADL24}, we see that such pairs are K-poly/semistable exactly when the quartic plane curve $B$ is poly/semistable in the GIT sense, both of which are implied in this case by the smoothness of $B$. This provides another proof that all smooth members of family \textnumero $3.9$ are K-polystable, as previously shown in \cite{ACCFKMGSSV23}.
\end{exa}   
\begin{exa}
    The Fano family \textnumero $3.19$ (previously known, see \cite{ACCFKMGSSV23}): Letting $V = \P^2$, $r = 3$, and $B$ be a smooth conic, we have then that $\cL = \cO_V (1)$, $X$ is $\P^3$ blown up at a point, and $Y$ is a member of the Fano family  \textnumero $3.19$  with K-poly/semistability equivalent to that of the pair $(V,\frac{33}{152}B)$ by Theorem~\ref{main}. By \cite{LS14}, we see that such a pair K-polystable, thus providing another proof that the unique smooth Fano variety in family \textnumero $3.19$ is K-polystable, originally shown in \cite{IS17}. 
\end{exa}
\begin{exa}
    The Fano family \textnumero $4.2$ (previously known, see \cite{ACCFKMGSSV23}): Let $V = \P^1 \times \P^1$, $r = 2$, and $B$ be a smooth curve of bidegree $(2,2)$. Then, $\cL = \cO_V (1,1)$, $X = \P(\cO(1,1) \oplus \cO)$, and $Y$ is a smooth member of the Fano family \textnumero $4.2$ and, by the above theorem, the K-polystability of $Y$ is equivalent to that of the pair $(V,\frac{11}{56}B)$. The K-polystability of this pair follows from the interpolation of K-stability (see \cite[Proposition 2.13]{ADL24} and \cite[Theorem 2.10]{ADL23}) since $V$ is K-polystable and $(V,B)$ is a plt log Calabi-Yau pair. Thus, by Theorem~\ref{main}, such $Y$ is K-polystable, giving a new proof of this result previously shown in \cite{ACCFKMGSSV23}. 
\end{exa}
\begin{exa}
    New examples from blow-ups related to quartic surfaces in $\P^3$ and higher dimensional analogs: Let $V=\P^3$, $r = 2$, and $B$ a smooth quartic surface in $\P^3$. Then, $\cL=\cO_V (2)$, and $Y$ is the blow-up of the cone over the second Veronese embedding of $\P^3$ in $\P^9$ along the cone point and a quartic surface in the base. In this case, $(V,aB) = (\P^3, \frac{13}{75} B)$. By \cite{ADL23}, such pairs are K-poly/semistable exactly when they are poly/semistable in the GIT sense. As $B$ is smooth, it is GIT polystable, thus all smooth fourfolds constructed in this manner are K-polystable by Theorem~\ref{main}.
    
    We generalize this to higher dimensions: let $V = \P^{n-1}$ for $n$ even, $r=2$, and $B$ a smooth degree $n$ hypersurface in $V$. Then $\cL=\cO_V (\frac{n}{2})$, and $Y$ is the blow-up of the cone over the embedding $\phi_{|\cL|}: V \hookrightarrow \P^N$ along the cone point and the inclusion of $B$ in the base of the cone. By \cite[Theorem 1.4]{ADL24}, there exists some $c_1$ such that $(V,cB)$ is K-semi/polystable if and only if $B$ is semi/polystable in the GIT sense for all $c < c_1$. If $a=a(n,2) < c_1$, then $B$ smooth implies $B$ is polystable in the GIT sense and thus $(V,aB)$ is K-polystable, implying by Theorem~\ref{main} that $Y$ is K-polystable. If $c_1 \leq a$, then again since $B$ is smooth, $(V,cB)$ is K-polystable for some $c < a$. Then, since $(V,B)$ is a plt log  Calabi-Yau pair, $(V,(1-\epsilon)B)$ is K-polystable for some sufficiently small $\epsilon$ (see \cite[Theorem 2.10]{ADL23}), and thus by interpolation of K-stability (see \cite[Proposition 2.13]{ADL24}), we again have that $(V,aB)$ is K-polystable, implying the K-polystability of $Y$ by Theorem~\ref{main}. This improves \cite[Theorem 1.9]{CDGFKMG23}.
\end{exa}
\begin{exa}
    New K-unstable example: Let $V = \Bl_p \P^3$, $r = 2$, and $B$ a smooth member of $|-K_V|$. Then, $\cL = \pi^*(2H) - E$ where $\pi: V \to \P^3$ is the blow-up map and $E$ is the exceptional divisor of the blow-up, $X = \P_V(\cL \oplus \cO_V)$, and $Y$ is $\Bl_{B_\infty} X$. As in the previous example, $a = \frac{13}{75}$.  A straightforward computation shows that the pair $(V,aB)$ is K-unstable with $\beta_{V,aB}(\pi^*(H)) <0$, where $\pi^*(H)$ is the pullback of the hyperplane section of $\P^3$ along the blow-up. Thus, by Theorem~\ref{main}, $Y$ is K-unstable.
\end{exa}

\section{Other Blow-ups of Projective Compactifications of Proportional Line Bundles}
\label{other}
In this section, we see that, when replacing the assumption that $l = 2$ in the construction of $Y$ with $ l \neq 2$ (where $ B \sim_{\Q} l \cL$), the resulting Fano varieties $Y$ are always K-unstable.

\begin{thm}[Theorem~\ref{unstable}]
    Let $Y$ be constructed as above with $V$ and $B$ smooth and $B \sim_\Q l \cL$ for $0 < l < r+1, l \neq 2$. Then $Y$ is K-unstable. Furthermore, either the strict transform of the image of the positive section containing $B_\infty$, denoted as $\bVi$, or the strict transform of the zero section, $V_0$, is a destabilizing divisor for $Y$.
\end{thm}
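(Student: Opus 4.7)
The proof proceeds by exhibiting one of $V_0, \bVi$ as a destabilizing divisor. By Theorem~\ref{equiv} we may work $\bT$-equivariantly. The identifications $\ord_{V_0} = \mathrm{wt}_1$ and $\ord_{\bVi} = \mathrm{wt}_{-1}$ come from the weight decomposition $\C(Y) \cong \C(V)(t)$ induced by the $\bT$-action, and these are independent of $l$ since $B_\infty$ remains $\bT$-fixed. Applying the linearity of the Futaki character $\Fut\colon N \to \R$, exactly as in the second (simpler) proof of Lemma~\ref{futaki}, gives
\[
\beta_Y(V_0) + \beta_Y(\bVi) \;=\; \Fut(\xi) + \Fut(-\xi) \;=\; 0.
\]
Consequently, it suffices to prove that $\beta_Y(\bVi) \neq 0$ whenever $l \neq 2$: whichever of $V_0, \bVi$ has the negative $\beta$-invariant then destabilizes $Y$.

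To compute $\beta_Y(\bVi) = 1 - S_Y(\bVi)$ I carry out the Zariski decomposition of $-K_Y - t\bVi$ for $0 \leq t \leq 2$, paralleling the $l=2$ case treated in Lemma~\ref{futaki}. The pseudo-effective threshold is $\tau = 2$: the general fiber $f$ of the conic bundle $Y \to V$ is a covering family with $(-K_Y - t\bVi)\cdot f = 2-t$. Using the two rewritings
\[
-K_Y - t\bVi \;=\; (2-t)\bVi + (r-1)\pi^*\phi^*\cL + E \;=\; \pi^*(-K_X - tV_\infty) + (t-1)E,
\]
I verify (by nef checks against the $\bT$-invariant curve classes $c_0, c_\infty, f, e, f-e$, together with \cite[Proposition 2.13]{Okawa16}) that the positive part is $P(t) = -K_Y - t\bVi$ for $t \in [0,1]$, while $P(t) = \pi^*(-K_X - tV_\infty)$ with $N(t) = (t-1)E$ for $t \in [1,2]$.

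For the volume, on $[1,2]$ one obtains $\vol(P(t)) = ((r+1-t)^n - (r-1)^n)\cL^{n-1}$ by intersection theory on $X$. On $[0,1]$, expanding $(\pi^*(-K_X - tV_\infty) + (t-1)E)^n$ requires the Segre classes of $N_{B_\infty/X}$, which splits as $N_{B/V} \oplus N_{V_\infty/X}|_B \cong (l\cL \oplus \cL)|_B$; this produces a closed-form polynomial in $n, r, l, t$. Integrating and dividing by $\vol(Y)$ gives $S_Y(\bVi)$ as an explicit rational function of $n, r, l$.

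The remaining and principal difficulty is to show that $1 - S_Y(\bVi)(l)$ vanishes only at $l=2$ in the range $[0, r+1)$, with $l=1$ handled as a limit since the $\tfrac{l^k-1}{l-1}$-type factors in the volume specialize continuously. After clearing denominators this reduces to proving that a polynomial $h(u)$ of degree $n+1$ in $u = l-1$ has $u=1$ as its unique root in $[-1, r)\setminus\{0\}$; that $u=1$ is a root is exactly Lemma~\ref{futaki}. I would establish uniqueness either by a monotonicity argument for $g(u) = h(u)/u^2$ on $(-1, r)$, or by isolating the remaining roots and checking they lie outside $[-1, r)$. Sample computations (e.g.\ $g(u) = \tfrac{2}{3}(1-u)$ for $n=r=2$, and a quadratic with roots $1$ and $13/3$ for $n=3, r=2$) indicate that the other roots lie outside the relevant interval, but extracting a uniform argument for all $(n,r)$ is the technical heart of the proof. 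Once established, one has $\beta_Y(\bVi) > 0$ (so $\beta_Y(V_0) < 0$, and $V_0$ destabilizes) for $l < 2$, and $\beta_Y(\bVi) < 0$ (so $\bVi$ destabilizes) for $l > 2$, completing the argument.
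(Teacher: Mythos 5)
Your overall architecture matches the paper's: show $\beta_Y(V_0)+\beta_Y(\bVi)=0$, then show $\beta_Y(\bVi)\neq 0$ for $l\neq 2$, so that whichever of the two has negative $\beta$-invariant destabilizes. Your first step via linearity of the Futaki character is a legitimate shortcut (the paper instead verifies $S_Y(V_0)+S_Y(\bVi)=2$ by explicitly computing both Zariski decompositions for general $l$, which has the side benefit of producing the formulas needed later), and your Zariski decomposition of $-K_Y-t\bVi$ and the resulting sign conclusions ($V_0$ destabilizes for $l<2$, $\bVi$ for $l>2$) agree with the paper.

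The genuine gap is exactly where you locate it: you never prove that $\beta_Y(\bVi)\neq 0$ for all $l\neq 2$ in the allowed range, uniformly in $n$ and $r$. Reducing to ``a degree $n+1$ polynomial $h(u)$ in $u=l-1$ has $u=1$ as its unique root in $[-1,r)\setminus\{0\}$'' and checking examples for small $(n,r)$ is not a proof, and controlling all roots of a general degree-$(n+1)$ polynomial is not a routine step one can wave at. The paper avoids this root-counting entirely by \emph{not} integrating first: it writes
\[
\frac{r^{n-1}\vol(Y)}{\vol(V)}\,\beta_Y(\bVi)=\int_0^1 \frac{(r-(t-1)(1-l))^n-(r+1-l)^n}{l-1}+(r-1)^n-(r+t-1)^n\,dt
\]
and shows the \emph{integrand} has a definite sign pointwise for $0<t<1$. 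Setting $f(x)=(r-x)^n$, $w=1-t$, $y=l-1$, the integrand equals
\[
(1-w)\left(\frac{f(1)-f(w)}{1-w}-\frac{f(wy)-f(y)}{wy-y}\right),
\]
i.e.\ a difference of secant slopes of $f$ over the intervals $[w,1]$ and $[\min(wy,y),\max(wy,y)]$; strict convexity of $f$ on $(-\infty,r)$ and the relative position of these intervals (the second lies to the right of the first precisely when $y>1$, i.e.\ $l>2$, and to the left when $l<2$) give a strict inequality between the slopes, hence a strict sign for the integrand and so for $\beta_Y(\bVi)$. This is the missing idea: compare difference quotients of the convex function $x\mapsto(r-x)^n$ inside the integral rather than trying to locate the roots of the integrated polynomial. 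If you want to salvage your version, you would need to supply an argument of comparable strength; the convexity route is the natural one and also handles $l=1$ by continuity, as you anticipated.
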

\begin{lemma}
\begin{align*}
    \beta_{Y} (V_0) + \beta_{Y} (\bVi) = 0 \\
\end{align*}
\end{lemma}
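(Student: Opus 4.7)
The plan is to follow the linearity-of-Futaki-character argument that appears as the ``second, simpler proof'' in Lemma~\ref{futaki}. That argument never invokes the involution $\iota$, which is unavailable for $l \neq 2$, so it should continue to apply for any $l$ in the admissible range $0 < l < r+1$. The identity $\beta_Y(V_0) + \beta_Y(\bVi) = 0$ is exactly what linearity of $\Fut|_N$ gives, with no appeal to the stronger $\beta_Y(V_0) = \beta_Y(\bVi)$ that required the involution in the $l = 2$ setting.

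The first step is to reproduce the identification of $V_0$ and $\bVi$ as the divisors associated to opposite $\bT$-coweights. Let $\xi \in N \cong \Z$ be a generator of the cocharacter lattice of $\bT \cong \G_m$. As in Lemma~\ref{futaki}, the test configuration induced by the coweight $\xi$ has associated valuation $\mathrm{wt}_\xi = \ord_t$ under the isomorphism $\C(Y) \cong \C(V)(t)$ furnished by a local trivialization of $\cL$, and this valuation is divisorial with associated divisor $V_0$. Symmetrically, the test configuration induced by $-\xi$ has associated divisor $\bVi$. This step is intrinsic to the $\P^1$-bundle structure of $X$ together with the fact that the blow-up center $B_\infty$ lies inside $V_\infty$: consequently $V_0$ is unaffected by $\pi$, and $\bVi = \pi^{-1}_*(V_\infty)$ remains a prime $\bT$-fixed horizontal divisor on $Y$ irrespective of the value of $l$.

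The second step is to apply the linearity of the Futaki character $\Fut|_N \colon N \to \R$, which is already invoked in Lemma~\ref{futaki} via the identity $\Fut(b\xi) = b\Fut(\xi)$. In particular, $\Fut(-\xi) = -\Fut(\xi)$. Combining with the identifications from the previous step,
\[
\beta_Y(V_0) + \beta_Y(\bVi) \;=\; \Fut|_N(\xi) + \Fut|_N(-\xi) \;=\; 0,
\]
which is the claim. I anticipate no real obstacle: all the content of the lemma already lives in the involution-free half of the argument for Lemma~\ref{futaki}, so no new Zariski-decomposition or volume computation is required.
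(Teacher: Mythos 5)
Your proof is correct, but it takes a genuinely different route from the paper. The paper proves this lemma by brute force: it writes down the Zariski decompositions of $-K_Y - t\bVi$ and $-K_Y - tV_0$ for general $l$ (treating $l=1$ and $l\neq 1$ separately), integrates, and checks that $S_Y(V_0)+S_Y(\bVi)=2\,$ directly. You instead identify $V_0$ and $\bVi$ as the divisors associated to the coweights $\xi$ and $-\xi$ (which, as you note, is independent of $l$ because $B_\infty\subset V_\infty$ is $\bT$-fixed, so the action lifts and Lemma~\ref{horizontal}'s description of the horizontal divisors persists) and then invoke linearity of $\Fut|_N$, exactly the involution-free half of the ``second, simpler proof'' remarked at the end of Lemma~\ref{futaki}. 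This is a legitimate and cleaner argument; the linearity of the Futaki character on the cocharacter lattice is standard and is already accepted without proof by the paper itself (both in that remark and in the twisting formula $\beta(v_b)=\beta(v)+b\Fut(\xi)$ quoted from Li in Lemma~\ref{vertical}), so you are not assuming more than the paper does. What your approach does not buy you is the explicit value of $S_Y(\bVi)$: the second half of the proof of Theorem~\ref{unstable} needs the integral formula for $\vol(Y)S_Y(\bVi)$ in order to show $\beta_Y(\bVi)\neq 0$ when $l\neq 2$, so the Zariski-decomposition computation cannot be dispensed with entirely --- your argument merely relocates it out of this lemma.
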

\begin{proof}
As $V_0, \bVi$ are both divisors on $Y$, $A_Y (V_0) = A_{Y} (\bVi) = 1$, so $\beta_{Y} (V_0) + \beta_{Y} (\bVi) = 2 - (S_{Y} (V_0) + S_{Y} (\bVi))$. It remains to show the sum $S_{Y} (V_0) + S_{Y} (\bVi)$ is equal to $2$.

The Zariski decompositions of $-K_{Y} - t\bVi, -K_{Y} - tV_0$ on $Y$ are as follows:
\[
P_\infty (t) = 
    \begin{cases}
        -K_{Y} - t \bVi = (1-t)\bVi +\pi^*\phi^*(-K_V) + V_0,   &\text{if } 0 \leq t \leq 1 \\
        (2-t)H +\frac{r-1}{r} \pi^*\phi^*(-K_V) 
        =\frac{r+1-t}{r} \pi^*\phi^*(-K_V)+(2-t)V_0, &\text{if } 1 \leq t \leq 2
    \end{cases}
\]

\[
N_\infty (t) = 
    \begin{cases}
        0,                                       &\text{if } 0 \leq t \leq 1 \\
        (1-t)E,                                  &\text{if } 1 \leq t \leq 2
    \end{cases}
\]

\[
P_0 (t) = 
    \begin{cases}
        -K_{Y} - t V_0 = \bVi + \pi^*\phi^*(-K_V) + (1-t)V_0,  &\text{if } 0 \leq t \leq 1 \\
         (2-t)\bVi +\frac{r-(t-1)(l-1)}{r} \pi^*\phi^*(-K_V),       &\text{if } 1 \leq t \leq 2
    \end{cases}
\]

\[
N_0 (t) = 
    \begin{cases}
        0,                                                                  &\text{if } 0 \leq t \leq 1 \\
        (t-1)F = (t-1) (\bVi +\frac{l-1}{r}\pi^*\phi^*(-K_V) - V_0),         &\text{if } 1 \leq t \leq 2
    \end{cases}
\]

This can be seen as follows: for the range $0 \leq t \leq 1$, $-K_Y - t\bVi$ and $-K_Y - tV_0$ are both nef, thus the negative parts of their Zariski decompositions are trivial. For $-K_Y - t\bVi$ with $1 \leq t \leq 2$, we observe that $P_\infty$ is the pullback along the contraction $\pi$ of a nef class on $X$ and $N_\infty$ is supported on the exceptional locus of the same contraction. Thus by \cite[Proposition 2.13]{Okawa16}, we see that this is the Zariski decomposition of  $-K_Y - t\bVi$. For $l \neq 1$, a similar argument shows that $P_0 + N_0$ is the Zariski decomposition of  $-K_Y - tV_0$, using the contraction $\pi': Y \to X' := \P_V (\cL' \oplus \cO_V)$ (for some line bundle $\cL'$ such that $l' \cL' \sim_{\Q} -K_V$ where $(l'-1)(l-1) = 1$) that contracts $F$. For $ l = 1$, we have the same contraction map $\pi': Y \to X' := \P_V(\cO_V \oplus \cO_V)$ but now for $\cL' = \cO_V$. 

We have the following intersection products on $Y$, for $k > 0$:
\begin{align*}
        V_0^k \cdot \pi^*\phi^*(-K_V)^{n-k}  &= V_0^{k-1} \cdot V_0 \cdot \pi^*\phi^*(-K_V)^{n-k} \\
        &= (H - \frac{1}{r} \pi^*\phi^*(-K_V))^{k-1} \cdot V_0 \cdot \pi^*\phi^*(-K_V)^{n-k} \\
        &= (- \frac{1}{r} \pi^*\phi^*(-K_V))^{k-1} \cdot V_0 \cdot \pi^*\phi^*(-K_V)^{n-k} \\
        &= (\frac{-1}{r})^{k-1} \vol(V) \\
    \end{align*}
    
    \begin{align*}
        \bVi^k \cdot \pi^*\phi^*(-K_V)^{n-k}  &= \bVi^{k-1} \cdot \bVi \cdot \pi^*\phi^*(-K_V)^{n-k} \\
        &= (V_0+\frac{1}{r}\pi^*\phi^*(-K_V) -E)^{k-1} \cdot \bVi \cdot \pi^*\phi^*(-K_V)^{n-k} \\
        &= (\frac{1}{r}\pi^*\phi^*(-K_V) -E)^{k-1} \cdot \bVi \cdot \pi^*\phi^*(-K_V)^{n-k} \\
        &= (\frac{1-l}{r})^{k-1} \vol(V) \\
    \end{align*}
    
Let us assume $l \neq 1$, then we have for $\vol(Y) S_Y (\bVi)$:
\begin{align*}
    \vol(Y) S_{Y} (\bVi) &= \int_0^1 \left( (1-t)\bVi + \pi^*\phi^*(-K_V) + V_0\right)^n dt \\
    &+ \int_1^2 \left(\frac{r+1-t}{r}\pi^*\phi^*(-K_V) + (2-t) V_0 \right)^n dt\\
    &= \frac{\vol(V)}{r^{n-1}} (\int_0^1 \frac{1}{1-l}\left( (1-t)(1-l) + r)^n -r^n\right) dt \\
    &+\int_0^1 r^n - (r-1)^n dt - \int_1^2 (r-1)^n - (r+1-t)^n dt ) \\
    &= \frac{\vol(V)}{r^{n-1}} \int_0^1 \frac{((1-t)(1-l) + r)^n -r^n}{1-l} + r^n - 2(r-1)^n +(r+t-1)^n dt 
\end{align*}

Similarly, for $\vol(Y) S_Y (\bVi)$:

\begin{align*} 
    \vol(Y) S_{Y} (V_0) &= \int_0^1 \left( \bVi + \pi^*\phi^*(-K_V) + (1-t) V_0\right)^n dt \\
    &+ \int_1^2 \left((2-t)\bVi +\frac{r-(t-1)(l-1)}{r} \pi^*\phi^*(-K_V)\right)^n dt\\
    &= \frac{\vol(V)}{r^{n-1}} (\int_0^1 \frac{1}{1-l}\left( (r+1-l)^n -r^n\right) dt + \int_0^1 r^n - (r+t-1)^n dt \\
    &+ \int_1^2  \frac{1}{1-l}\left( (r+1-l)^n -(r-(t-1)(l-1))^n \right) dt ) \\  
    &= \frac{\vol(V)}{r^{n-1}} \int_0^1  \frac{(r+1-l)^n -r^n}{1-l} + r^n - (r+t-1)^n \\
    &+ \frac{ (r+1-l)^n - (r+(t-1)(l-1))^n }{1-l}dt 
\end{align*}

Summing these two terms we get precisely $2 \vol(Y)$, and so $\beta(V_0) +\beta(\bVi) = 0$.

Now, for $ l = 1$, we have for $\vol(Y) S_Y (\bVi)$:

\begin{align*}
    \vol(Y) S_{Y} (\bVi) &= \int_0^1 \left( (1-t)\bVi + \pi^*\phi^*(-K_V) + V_0\right)^n dt \\
    &+ \int_1^2 \left(\frac{r+1-t}{r}\pi^*\phi^*(-K_V) + (2-t) V_0 \right)^n dt\\
    &= \frac{\vol(V)}{r^{n-1}} (\int_0^1 n(1-t)r^{n-1} + r^n - (r-1)^n dt \\
    &- \int_1^2 (r-1)^n - (r+1-t)^n dt ) \\
    &= \frac{\vol(V)}{r^{n-1}} \int_0^1 n(1-t)r^{n-1} + r^n - 2(r-1)^n +(r+t-1)^n dt 
\end{align*}

and similarly for $\vol(Y) S_Y (V_0)$:
\begin{align*} 
    \vol(Y) S_{Y} (V_0) &= \int_0^1 \left( \bVi + \pi^*\phi^*(-K_V) + (1-t) V_0\right)^n dt \\
    &+ \int_1^2 \left((2-t)\bVi +\frac{r-(t-1)(l-1)}{r} \pi^*\phi^*(-K_V)\right)^n dt\\
    &= \frac{\vol(V)}{r^{n-1}} (\int_0^1 nr^{n-1} + r^n - (r+t-1)^n dt + \int_1^2 n(2-t)r^{n-1} dt) 
\end{align*}

So for the $l = 1$ case, summing these two terms we again get $2 \vol(Y)$, and so $\beta(V_0) +\beta(\bVi) = 0$.
\end{proof}

Thus, to finish the prove of Theorem~\ref{unstable}, we simply must show that $\beta_{Y} (\bVi) \neq 0$ for $l \neq 2$. 
\begin{proof}
In fact, we show that $\frac{r^{n-1}\vol(Y)}{\vol(V)} \beta_{Y} (\bVi) \neq 0$.
\begin{align*}
    \vol(Y) S_{Y} (\bVi) &= \int_0^1 \left( (1-t)\bVi + \pi^*\phi^*(-K_V) + V_0\right)^n dt \\
    &+ \int_1^2 \left(\frac{r+1-t}{r}\pi^*\phi^*(-K_V) + (2-t) V_0 \right)^n dt\\
    &= \frac{\vol(V)}{r^{n-1}} (\int_0^1 \frac{1}{1-l}\left( (1-t)(1-l) + r)^n -r^n\right) dt \\
    &+\int_0^1 r^n - (r-1)^n dt + \int_1^2 (r-1)^n - (r+1-t)^n dt ) \\
    &= \int_0^1 \frac{((1-t)(1-l) + r)^n -r^n}{1-l} + r^n - 2(r-1)^n +(r+t-1)^n dt 
\end{align*}

\begin{align*}
    \frac{r^{n-1}\vol(Y)}{\vol(V)} \beta_{Y} (\bVi) &= \frac{r^{n-1}\vol(Y)}{\vol(V)} - \frac{r^{n-1}\vol(Y)}{\vol(V)} S_{Y} (\bVi) \\
    &= \left(\frac{r^n -(r+1-l)^n}{l-1} + r^n -(r-1)^n\right) \\
    &- \int_0^1 \frac{((1-t)(1-l) + r)^n -r^n}{1-l} + r^n - 2(r-1)^n +(r+t-1)^n dt \\ 
    &= \int_0^1 \left(\frac{r^n -(r+1-l)^n}{l-1} + r^n -(r-1)^n\right) \\
    &-  \frac{((1-t)(1-l) + r)^n -r^n}{1-l} - r^n + 2(r-1)^n - (r+t-1)^n dt \\ 
    &= \int_0^1 \frac{(r - (t-1)(1-l) )^n -(r+1-l)^n}{l-1}  + (r-1)^n - (r+t-1)^n dt 
\end{align*}
We will show the integrand is strictly positive (resp. negative) when $r > 2$ (resp. $r < 2$) for $ 0 < t < 1$. Let $f(x) = (r-x)^n$, $w = 1-t$ and $ y = l-1$. Then the integrand is equal to 
\[
    (1-w) \left( \frac{f(1)-f(w)}{1-w} - \frac{f(wy) - f(y)}{wy - y} \right)
\]
Then, since $f$ is strictly convex on the interval $(0,r)$, we have, for $l > 2$, 
\[
      \frac{f(1)-f(w)}{1-w} < \frac{f(1) - f(wx)}{1-wx} < \frac{f(wy) - f(y)}{wy - y} 
\]
with the inequalities reversing for $l < 2$. Thus the integrand is strictly positive (resp. negative), so $\beta_{Y}(\bVi) \neq 0$ as desired.
\end{proof}

\subsection{Examples of Other Blow-ups}
\begin{exa}
    The Fano family \textnumero $3.14$: Letting $V$ be $\P^2$, $r = 3$, and $B$ a smooth planar cubic curve, so that $l = 3$, we have that $X = \Bl_p \P^3$ is the blow-up of $\P^3$ at a point and $Y$ is a smooth member of the family of Fano threefolds \textnumero $3.14$, and all smooth members of said family are obtained in this way. Thus, Theorem~\ref{unstable} recovers the K-unstability of members of this family, originally due to Fujita \cite[Theorem 1.4]{Fujita16}.  
\end{exa}
\begin{exa}\label{exauns}
    K-unstable in each dimension: Let $V$ be $\P^{n-1}$, $r = n$, and $B$ a degree $d$ hypersurface in $V$, for $d < n$. Then $X = \Bl_p \P^n$ the blow up of $\P^n$ at a point and $Y$ is the blow-up of projective $n$-space along a codimension $2$ subvariety contained in the pullback of a hyperplane in $X$ that doesn't contain $p$. Such $Y$ is K-unstable by Theorem~\ref{unstable}. Thus, we have for every $n$ several examples of a K-unstable Fano variety of dimension $n$.
\end{exa}

\bibliographystyle{alpha}
\bibliography{ref}

\end{document}